\newcommand{\basef}{\mathbb{k}}
\newcommand{\base}{\mathrm{S}}
\newcommand{\baseS}{\mathrm{S}}
\newcommand{\Spec}{\operatorname{Spec}}
\newcommand{\ZZ}{\mathbb{Z}}
\newcommand{\QQ}{\mathbb{Q}}
\newcommand{\FF}{\mathbb{F}}
\newcommand{\ring}{\Lambda}
\newcommand{\X}{\mathscr{X}}
\newcommand{\Y}{\mathscr{Y}}
\newcommand{\Z}{\mathscr{Z}}
\newcommand{\U}{\mathscr{U}}
\newcommand{\sX}{\mathrm{X}}
\newcommand{\sY}{\mathrm{Y}}
\newcommand{\sZ}{\mathrm{Z}}
\newcommand{\sU}{\mathrm{U}}
\newcommand{\GL}{\mathrm{GL}}
\newcommand{\G}{\mathrm{G}}
\renewcommand{\L}{\mathrm{L}}
\renewcommand{\P}{\mathrm{P}}
\newcommand{\M}{\mathrm{M}}
\newcommand{\Ct}{\mathrm{Z}}
\newcommand{\T}{\mathrm{T}}
\newcommand{\sW}{\mathrm{W}}
\newcommand{\Nm}{\mathrm{N}}
\newcommand{\GG}{\mathbb{G}}
\newcommand{\Bs}{\mathrm{B}}
\newcommand{\N}{\mathscr{N}}
\renewcommand{\AA}{\mathbb{A}}
\newcommand{\Or}{\mathscr{O}}
\newcommand{\ind}{\mathscr{I}}
\newcommand{\St}{\mathrm{St}}
\newcommand{\Fl}{\operatorname{Fl}}
\newcommand{\Hom}{\operatorname{Hom}}
\newcommand{\iHom}{\underline{\operatorname{Hom}}}
\newcommand{\End}{\operatorname{End}}
\newcommand{\id}{\mathrm{id}}
\newcommand{\R}{\mathrm{R}}
\renewcommand{\H}{\mathrm{H}}
\newcommand{\CH}{\operatorname{CH}}
\newcommand{\Tilt}{\mathscr{T}}
\newcommand{\Corr}{\mathrm{Corr}}
\newcommand{\Sym}{\operatorname{Sym}}
\newcommand{\Comp}{\operatorname{Comp}}
\newcommand{\Compf}{\operatorname{Compf}}
\newcommand{\Mod}{\mathrm{Mod}}
\newcommand{\grMod}{\mathrm{grMod}}
\newcommand{\Sch}{\mathrm{Sch}}
\newcommand{\Rep}{\operatorname{Rep}}
\newcommand{\A}{\mathscr{A}}
\newcommand{\C}{\mathscr{C}}
\renewcommand{\Pr}{\mathrm{Pr}}
\newcommand{\Sp}{\mathsf{Sp}}
\newcommand{\D}{\operatorname{D}}
\newcommand{\h}{\mathrm{h}}
\newcommand{\Ch}{\operatorname{Ch}}
\newcommand{\K}{\mathrm{K}}
\renewcommand{\L}{\mathrm{L}}
\newcommand{\st}{\mathrm{st}}
\newcommand{\red}{\mathrm{red}}
\newcommand{\gm}{\mathrm{gm}}
\newcommand{\m}{\mathrm{m}}
\newcommand{\op}{\mathrm{op}}
\newcommand{\add}{\mathrm{add}}
\newcommand{\Spr}{\mathrm{Spr}}
\newcommand{\dv}{\mathbf{d}}
\newcommand{\pdv}{\underline{\mathbf{d}}}
\newcommand{\redm}{\mathrm{r}}
\let\svc\c
\DeclareRobustCommand\c{\ifmmode{c}\else\expandafter\svc\fi}
\newcommand{\DTM}{\operatorname{DTM}}
\newcommand{\DM}{\operatorname{DM}}
\DeclareMathOperator{\Fun}{Fun}
\DeclareMathOperator{\Map}{Map}
\newtheorem{prop}{Proposition}[section]
\newtheorem{lemma}[prop]{Lemma}
\newtheorem{thm}[prop]{Theorem}
\newtheorem{cor}[prop]{Corollary}
\newtheorem{theorem}{Theorem}
\theoremstyle{definition}
\newtheorem{defn}[prop]{Definition}
\newtheorem{example}[prop]{Example}
\theoremstyle{remark}
\newtheorem{rem}[prop]{Remark}
\title{An Integral Springer Correspondence}
\author{Thiago Landim}
\begin{document}

\maketitle

\begin{abstract}
    We prove an integral version of the derived Springer correspondence for reduced motives. To achieve this result, we extend some results on reduced motives from schemes to quotient stacks with a finite number of orbits. More generally, we work in the context of the Springer setup, as defined by Eberhardt and Stroppel, which has also applications for quiver Hecke algebras.
\end{abstract}

\tableofcontents

\section*{Introduction}

Let $\G$ be a reductive group with Weyl group $\sW$, let $\N_\G$ be its nilpotent cone. The Steinberg variety $\Z \coloneqq \widetilde{\N}_\G \times_{\N_\G} \widetilde{\N}_\G$ is given by the fiber product of the Springer resolution with itself. The Springer correspondence, discovered by Springer in \cite{springer_1}, classically gives a correspondence between irreducible $\overline{\QQ}_\ell$-representations of $\sW$ and some irreducible $\ell$-adic perverse sheaves on $\N_\G$. 

Over time, three directions to generalize such a correspondence were researched. First of all, Lusztig found in \cite{int_coh} what is now called the generalized Springer correspondence, which para-metrizes \emph{all} $\G$-equivariant irreducible perverse sheaves on $\N_\G$ using the \emph{relative} Weyl groups of $\G$. Secondly, Achar et al. in \cite{weyl_fourier_springer_integral} proved that the original correspondence existed already with coefficients being a commutative Noetherian ring\footnote{Springer had already noticed in \cite{springer_2} that one may use classical cohomology to find the correspondence over $\QQ$.} (of finite global dimension). Finally, Rider in \cite{rider_springer} proved a derived version of the classical Springer  correspondence.

Recently, Eberhardt and Stroppel in \cite{springer_motives} and \cite{motivic_springer} developed a motivic Springer correspondence with rational coefficients. This extends Rider's work, and shows, in particular, that the derived correspondence is independent of the prime $\ell$.

Eberhardt's and Stroppel's approach uses ideas developed by Soergel, Virk and Wendt in \cite{perverse_category_o} and \cite{eq_mot} which lays down the foundation of equivariant motives with rational coefficients. Their computation relies on the fact that the higher $\K$-theory of $\FF_q$ is all torsion and are therefore trivialized by taking rational coefficients.

In a different direction, Eberhardt and Scholbach in \cite{reduced_motives} defined the notion of a reduced motive. For a finite type scheme $\sX$ over a base $\base$, the category of reduced motives of $\sX$ are obtained from the category of motives of $\sX$ after killing the information coming from the motivic cohomology of the base $\base$. In particular, by the Atiyah-Hirzebruch spectral sequence, this should allow one to work as if the higher $\K$-theory of the base $\base$ was trivial. We are going to build on Eberhardt's and Stroppel's ideas in the context of reduced motives.

To prove the integral Springer correspondence, we extend the whole Springer setup. In particular, this allows us to apply the same formalism to quiver Hecke algebras and quiver Schur algebras, as in \cite{motivic_springer}. Before stating the results, we will fix some notation.

Let $\basef$ be a field, let $\G$ be a linear algebraic group over $\basef$, let $\mu_i \colon \widetilde{\N}_i \to \N$ be a collection of $\G$-equivariant proper morphisms between quasi-projective varieties with $\G$-actions, let $\ring$ be a commutative ring of coefficients and consider, as in \cite{motivic_springer}, the \emph{motivic extension algebra}
\[
    \mathrm{E} = \bigoplus_{n \in \ZZ} \bigoplus_{i,j} \Hom_{\DM(\N/\G, \ring)}(\mu_{i!}\ring, \mu_{j!}(\ring)[2n](n)).
\] 

For schemes, it is a classical result that such $\Hom$-groups may be computed as Chow groups of the fiber product. For algebraic stacks over a field, this is done, for example, by Aranha and Chowdhury in \cite[Theorem 5.2]{Chow-weight-stacks}. Denoting $\Z_{i,j} \coloneqq \widetilde{\N}_i \times_\N \widetilde{\N}_j$, one may describe the algebra $\mathrm{E}$ as
\[
    \mathrm{E} \cong \bigoplus_{i, j} \CH_{\bullet - d_j}^\G(\Z_{i,j})_\ring,
\]
where $d_j$ is the dimension of $\widetilde{\N}_j$.

Under this setup, one defines the category $\DM_\redm^\Spr(\N/\G, \ring)$ of \emph{reduced Springer motives} as the thick subcategory of $\DM_\redm(\N/\G, \ring)$ generated by Tate twists of the objects $\mu_{i!}\ring$. After introducing purity, finiteness and niceness conditions, which we call (PT), (FO) and (RG) (see \S 2.1), we prove the following.

\begin{theorem}[Theorem \ref{thm:springer_setup}]
    Under the hypotheses (PT), (FO) and (RG), there exists an equivalence of $\infty$-categories
    \[
        \DM_\redm^\Spr(\N/\G, \ring) \cong \mathrm{grPerf}(\mathrm{E}),
    \]
    where $\mathrm{grPerf}(\mathrm{E})$ denotes the category of graded perfect complexes over $E$. If $\basef = \overline{\FF}_p$ and $\ring = \QQ$, this equivalence is the same as in \cite[Theorem 4.9]{motivic_springer}.
\end{theorem}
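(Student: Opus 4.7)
The plan is to proceed via a graded Morita-theoretic argument, following the template of \cite[Theorem 4.9]{motivic_springer}. Set $\mathcal{P} \coloneqq \bigoplus_i \mu_{i!}\ring$, regarded as a $\ZZ$-graded object of $\DM_\redm(\N/\G, \ring)$ via Tate twists. By construction, the Tate shifts of $\mathcal{P}$ generate $\DM_\redm^\Spr(\N/\G, \ring)$ as a thick subcategory.

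The proof proceeds in three steps. First, I would verify that $\mathcal{P}$ is a compact generator of the relevant presentable subcategory of $\mathrm{Ind}\,\DM_\redm(\N/\G, \ring)$. Compactness of each $\mu_{i!}\ring$ should follow from properness of $\mu_i$ together with the extension of the reduced-motive formalism from schemes to quotient stacks with finitely many orbits, which is where hypothesis (FO) enters. Second, I would identify the graded endomorphism algebra of $\mathcal{P}$ computed in $\DM_\redm$ with $\mathrm{E}$: by the Chow-group description, the relevant Hom-groups in $\DM$ are equivariant Chow groups of the fiber products $\Z_{i,j}$, and the purity hypothesis (PT) forces these groups to survive unchanged through the reduction functor $\DM \to \DM_\redm$, while (RG) rules out spurious contributions from higher motivic cohomology of the base. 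Third, a graded Schwede--Shipley-type representation theorem for compactly generated stable $\infty$-categories yields an equivalence between the thick subcategory generated by $\mathcal{P}$ and $\mathrm{grPerf}(\mathrm{E})$, concretely realized by the functor $\Hom(\mathcal{P}, -)$.

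For the compatibility statement, when $\basef = \overline{\FF}_p$ and $\ring = \QQ$, higher $\K$-theory of $\basef$ is torsion, so rational motivic cohomology of the base vanishes above degree zero; the reduction functor $\DM(-, \QQ) \to \DM_\redm(-, \QQ)$ is therefore an equivalence on the Springer subcategories at play. Since both the equivalence constructed here and the one in \cite{motivic_springer} arise from the same $\Hom(\mathcal{P}, -)$ functor, and since $\mathrm{E}$ coincides with the Eberhardt--Stroppel extension algebra under this identification, the two equivalences agree.

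The main obstacle I expect is the second step: faithfully controlling Hom-groups in $\DM_\redm$ (rather than in $\DM$) between the generators $\mu_{i!}\ring$ on the quotient stack $\N/\G$ rather than on a scheme. This is precisely where the hypotheses (PT), (FO), and (RG) must act in concert, and it also relies on the technical extension of the reduced-motive formalism of \cite{reduced_motives} from schemes to quotient stacks with finitely many orbits, as advertised in the abstract. Establishing this stack-level extension is the technical heart that makes the argument possible.
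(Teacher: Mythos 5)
Your outline reproduces the skeleton of the argument (generate by Tate twists of the $\mu_{i!}\ring$, identify the graded endomorphism algebra with $\mathrm{E}$, then invoke an abstract Morita-type statement), but it defers precisely the step that constitutes the actual content of the paper, and you say so yourself: ``faithfully controlling Hom-groups in $\DM_\redm$ \dots is the technical heart.'' In the paper this heart consists of two concrete statements, neither of which your proposal supplies. First, a vanishing statement: $\Hom_{\DM_\redm(\N/\G)}(\mu_{i!}\ring,\mu_{j!}\ring[2n+k](n))=0$ for $k\neq 0$, so that the collection $\widehat{\Tilt}^{\Spr}_\redm$ is genuinely tilting (in $\DM$ itself it is only negative, because higher Chow groups survive). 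Second, a comparison statement: the reduction functor induces an isomorphism on the degree-zero Homs, so that $\End^\bullet_{\DM_\redm}$ may be computed in $\DM$ and hence, via the stacky Chow-group comparison of Aranha--Chowdhury, identified with $\bigoplus_{i,j}\CH^\G_{d_j-\bullet}(\Z_{i,j})_\ring$. Both are proved by induction on the number of orbits (this is where (FO) enters, via localization triangles), with base case the classifying stacks $\Bs\Ct_\G(x)$ of stabilizers, where (RG) is used together with Krishna-style computations (reduction to Weyl-group invariants on the torus, homotopy invariance for the unipotent radical, invertibility of $\lvert\pi_0\rvert$ and of the torsion number). Your attribution of the hypotheses is off accordingly: (FO) is not about compactness, and (RG) does not ``rule out higher motivic cohomology of the base''---that is what the passage to $\DM_\redm$ does by definition; (RG) is what makes the stabilizer-level computations work. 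Without these two inputs your Step 2, and hence the whole argument, does not get off the ground.

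There is also a structural difference worth noting. The paper does not use compact generation or a Schwede--Shipley argument in an Ind-category: it equips the small thick subcategory with the bounded weight structure generated by the tilting collection and applies the weight complex functor (Corollary \ref{thm:tilting_algebra_2}), which immediately yields $\mathrm{grPerf}(\mathrm{E}^\bullet)$ with $\mathrm{E}^\bullet$ a discrete graded algebra. Your route would additionally require (a) compactness of the $\mu_{i!}\ring$ in a large category of motives on the quotient stack $\N/\G$, which is not established in the paper and is not automatic when the category is defined as a limit over a \v{C}ech nerve, and (b) a formality argument to replace modules over the endomorphism ring spectrum by graded perfect complexes over the discrete algebra $\mathrm{E}$ --- and that formality is again exactly the tilting vanishing you have deferred. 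So even granting the Schwede--Shipley framing, the missing vanishing and comparison lemmas are unavoidable; as written, the proposal is a plan that names the gap rather than a proof that closes it.
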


As promised, we apply the formalism above in the context of a single map, the Springer resolution $\mu \colon \widetilde{\N}_\G \to \N_\G$. Let $\basef$ be an algebraically closed field with characteristic exponent $p$, and let $\Lambda$ be a $\ZZ[\tfrac{1}{p}]$-algebra. Let $\G$ be a reductive group over $\basef$, let $\N_\G$ be the nilpotent cone of $\G$ and $\Z \coloneqq \widetilde{\N}_\G \times_{\N_\G} \widetilde{\N}_\G$ the Steinberg variety.

\begin{theorem}[Theorem \ref{thm:springer_correspondence}]
    Under rather mild hypotheses on $p$ and $\ring$, there exists an equivalence of $\infty$-categories
    \[
        \DM_{\mathrm{r}}^{\Spr}(\N_\G/\G, \Lambda) \cong \mathrm{grPerf}(\CH_\bullet^\G(\Z)_\ring).
    \]
    Moreover, this equivalence commutes with parabolic induction.
\end{theorem}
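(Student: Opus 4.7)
The strategy is to apply Theorem A to the \emph{single} morphism given by the Springer resolution $\mu : \widetilde{\N}_\G \to \N_\G$. Two tasks remain: verifying the hypotheses (PT), (FO) and (RG) in this setting, and checking compatibility with parabolic induction. Hypothesis (FO) is the classical fact that a reductive group $\G$ acts on its nilpotent cone $\N_\G$ with only finitely many orbits (Dynkin--Kostant in good characteristic, Lusztig in general). Hypothesis (PT) follows from the properness and semismallness of $\mu$: the object $\mu_! \ring = \mu_* \ring$ is pure, so all Tate-twisted shifts of it are pure in $\DM_\redm(\N_\G/\G, \ring)$. Hypothesis (RG) is where the mild assumption on $p$ and $\ring$ should enter: each orbit stratum is of the form $\Bs\Ct_\G(x)$ for $x$ an orbit representative, and one needs the reduced motivic cohomology of such classifying stacks to behave well integrally, which is precisely what the assumption on $p$ relative to the torsion in the component groups $\Ct_\G(x)/\Ct_\G(x)^\circ$ is designed to ensure.

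Given the hypotheses, Theorem A yields an equivalence with $\mathrm{grPerf}(\mathrm{E})$, and the identification $\mathrm{E} \cong \CH_\bullet^\G(\Z)_\ring$ is immediate from the general formula displayed just before Theorem A: with only one index, the double sum collapses to a single summand, namely the equivariant Chow homology of the Steinberg variety, shifted by $d = \dim \widetilde{\N}_\G$.

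For compatibility with parabolic induction, let $\P \subseteq \G$ be a parabolic with Levi $\L$. The Grothendieck--Springer type diagram
\[
    \widetilde{\N}_\L \xleftarrow{q} \widetilde{\pp} \xrightarrow{r} \widetilde{\N}_\G
\]
defines the parabolic induction functor $r_! q^*$ on Springer motives, and on the algebra side produces the expected induction of graded modules over the Chow rings of the Steinberg varieties by pullback--pushforward along the associated correspondence of fiber products. Since the equivalence of Theorem A is built functorially out of $\Hom(\bigoplus_i \mu_{i!}\ring, -)$, the compatibility reduces to matching generators under the correspondence, which should be essentially formal once one has identified $r_! q^* \mu_{\L,!} \ring$ with $\mu_{\G,!}\ring$ up to the appropriate shifts.

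I expect the principal obstacle to lie in verifying hypothesis (RG) at integral coefficients. The rational framework of \cite{springer_motives} trivializes the motivic cohomology of the base and thereby avoids all torsion; integrally one genuinely needs the reduced framework of \cite{reduced_motives} to kill the base contribution, followed by a careful analysis using the hypothesis on $p$ to control the remaining torsion in the equivariant motivic cohomology of the nilpotent centralizers. Once (PT), (FO) and (RG) are established, the rest of the theorem (including the parabolic induction statement) follows from the general machinery of Theorem A without further difficulty.
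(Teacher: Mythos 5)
Your verification of (PT) is the genuine gap. Hypothesis (PT) asks that for every $x \in \N_\G$ the motive $\M(\mu^{-1}(x))$ of the Springer \emph{fiber} be pure Tate, i.e.\ a finite direct sum of objects $\ring[2n](n)$; this is a statement about the fibers, not about weight purity of the pushforward. Properness and semismallness of $\mu$ give purity of $\mu_!\ring=\mu_*\ring$ in the perverse or weight-structure sense, but they do not make the motives of Springer fibers Tate --- that is a genuine geometric input (affine pavings of Springer fibers, proved motivically by Eberhardt in \cite{springer_motives}), and it is precisely where the hypotheses on $p$ (good prime for the classical constituents, $p>h$ for the exceptional ones, so that Jacobson--Morozov is available) enter. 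You instead locate the role of $p$ in (RG); but (RG) is the ``rather good'' condition on the coefficient ring $\ring$ alone, namely invertibility of the component group orders $\lvert \mathrm{A}_\G(e)\rvert=\lvert\pi_0\Ct_\G(e)\rvert$ and of the torsion numbers $t_{\Ct_\G(e)_\red}$, and is verified from the table of bad/torsion primes and component groups, not from a condition on the characteristic. Note also that in the paper pointwise pure Tate-ness of $\mu_!\ring$ is \emph{deduced} from (PT) by proper base change and Verdier duality (Lemma \ref{lemma:pointwise_springer}); your proposal inverts this logical order.

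Your treatment of parabolic induction is also too thin to count as a proof. The paper defines induction via the correspondence $\N_\L/\L \leftarrow \N_\P/\P \rightarrow \N_\G/\G$ on nilpotent cones (and the analogous correspondence of Steinberg varieties), and the compatibility is checked in two non-formal steps: the weight-complex equivalence commutes with the weight-exact induction functor by the functoriality of the weight complex functor (Proposition \ref{prop:funct_weight}, Proposition \ref{thm:tilting_equivalence_2}), and the identification of the extension algebra with $\CH^\G_\bullet(\St_\G)_\ring$ intertwines the induced maps by Jin's comparison of Borel--Moore homology and Chow groups \cite{jin_borel-moore}. Moreover, for the induction statement to make sense one must know that ``rather good'' for $\G$ implies ``rather good'' for every Levi $\L$, so that the Levi side also satisfies (RG); the paper proves this via Lusztig--Spaltenstein induction of orbits, and your sketch omits it entirely. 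The remaining points (FO, and the collapse of the extension algebra to $\CH^\G_\bullet(\St_\G)_\ring$ for a single map) do agree with the paper's argument.
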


This generalizes both the rational motivic Springer theory in \cite[Theorem 5.2]{motivic_springer} and, at least for $\GL_n$, the integral Springer correspondence given in \cite[Corollary 5.3]{weyl_fourier_springer_integral}. Applications to quiver Hecke algebras, as done by \cite[\S 6]{motivic_springer}, are sketched in \S 2.3.

\subsection*{Notations}

We denote by $\Pr_\st^\L$ the category of presentable stable $\infty$-categories and functors between them preserving colimits. If $\C$ is a stable $\infty$-category and $x, \, y \in \C$, we will denote by $\Map_\C(x,y)$ the mapping space, by $\operatorname{map}_\C(x,y)$ the mapping spectra and by $\Hom_\C(x,y) \coloneqq \pi_0(\Map_\C(x,y))$ the Hom-set. If $\C$ is a presentable stable category endowed with a fixed automorphism $\langle 1 \rangle$ and if $c \in \C$, we will follow \cite{motivic_springer} and denote $\End^\bullet(c) \coloneqq \bigoplus_n \Hom_{\C}(c, c[n]\langle n \rangle)$ with the natural composition.

\section{Reduced motives on quotient stacks}

\subsection{Reduced motives and Tate motives}

We are going to review some definitions and properties of (reduced) motives on stacks. The results on schemes are all contained in \cite{reduced_motives}. The main new step is the computation of some $\Hom$-groups on the category of reduced motives on a classifying space. 

First of all, let $\baseS$ be a connected smooth finite type scheme over a Dedekind ring and let $\ring$ be a regular coherent ring. For $\sX$ a finite type scheme over $\baseS$, we are going to denote by $\DM(\sX, \ring)$ the category of \emph{motives over $\sX$} as defined by Spitzweck in \cite{spitzweck}. Since we are working exclusively with finite type schemes, we will denote by $\Sch_\baseS$ the category of finite type schemes over $\baseS$. Moreover, if $\ring$ is already clear from context, we will denote $\DM(\sX, \ring)$ simply by $\DM(\sX)$. Before going any further, we recall some basic properties of $\DM$ and extend them to (algebraic) stacks. 

\begin{prop}[\S 10, \cite{spitzweck}] \label{prop:six_functors_DM}
    The categories of motives satisfy the following functorial properties:
    \begin{enumerate}[(1)]
        \item For any morphism $f \colon \sY \to \sX$, there exists an adjunction $f^* \colon \DM(\sX,\ring) \leftrightarrows \DM(\sY, \ring) \colon f_*$;
        \item For any morphism $f \colon \sY \to \sX$, there exists an adjunction $f_! \colon \DM(\sX,\ring) \leftrightarrows \DM(\sY, \ring) \colon f^!$;
        \item For any $\sX \in \Sch_{\baseS}$, the category $\DM(\sX)$ is closed symmetric monoidal;
        \item \emph{Nisnevich descent:} The functor $\DM^* \colon \Sch_{\baseS}^{\op} \to \Pr^\L_\st$ is a Nisnevich sheaf.
        \item \emph{Base change formulas:} For any Cartesian diagram
        \[
        \begin{tikzcd}
            \sY' \ar["g'"']{d} \ar["f'"]{r} & \sX' \ar["g"]{d} \\
            \sY \ar["f"']{r} & \sX,
        \end{tikzcd}
        \]
        there exists a natural isomorphism $g_!f'^* \to f^*g'_!$. Dually, there is also a natural isomorphism $g^!f'_* \to f_* g'^!$.
        \item \emph{Projection formulas:} For any $f \colon \sY \to \sX$ and for any motives $M \in \DM(\sX)$ and $N \in \DM(\sY)$, there is a natural isomorphism $M \otimes f_!(N) \cong f_!(f^*M \otimes N)$. Dually, for any $M, N \in \DM(\sX)$ one has
        \[
           f^!\iHom(M,N) \cong \iHom(f^*M, f^!N)
        \]
        and for any $M, \, N \in \DM(\sY)$ one has
        \[
            \iHom(f_! M, N) \cong f_*\iHom(M, f^!N).
        \]
        \item \emph{Localization:} Let $i \colon \sZ \to \sX$ be a closed immersion of finite type $\baseS$-schemes and let $j \colon \sU \to \sX$ be the open complement, then for any $M \in \DM(\sX)$ one has distinguished triangles
        \[
            j_!j^*M \to M \to i_!i^* M
        \] 
        and
        \[
            i_*i^!M \to M \to j_*j^! M.
        \]
        \item \emph{Relative purity:} For any smooth morphism $f \colon \sY \to \sX$ there exists a natural isomorphism $f^! \cong f^* \otimes f^!(\ring)$. Moreover, if $f$ is of relative dimension $d$, then $f^!(\ring) = \ring[2d](d)$.
        \item \emph{$\AA^1$-invariance:} For any $\sX \in \Sch_\baseS$, the morphism
        \[
            \pi^* \colon \DM(\sX) \to \DM(\AA^1_{\sX})
        \]
        is fully faithful, where $\pi \colon \AA^1_{\sX} \to \sX$ is the structure map.
        \item \emph{Projective bundle formula:} Let $\sX$ be a smooth scheme and $p \colon \mathrm{P} \to \sX$ be a projective bundle of dimension $n$. Then there exists a canonical isomorphism 
        \[
            p_*(\ring_{\mathrm{P}}) \xrightarrow{\simeq}\bigoplus_{i=0}^n \ring_{\sX}[2i](i).
        \]
    \end{enumerate}
\end{prop}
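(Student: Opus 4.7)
The plan is to invoke Spitzweck's construction of $\DM(-,\ring)$ as the $\infty$-category of modules over the motivic Eilenberg--MacLane spectrum $M\ring$ in the stable motivic homotopy category $\SH(-)$ over a Dedekind base, and then deduce each of the ten properties from the axiomatic six-functor formalism (in the style of Ayoub and Cisinski--Déglise) applied to this construction. The overall strategy is to verify a small list of ``minimal'' axioms — the closed symmetric monoidal structure, Nisnevich descent, $\AA^1$-invariance, Tate (i.e.\ $\PP^1$-) stability, and the localization triangle — and then derive everything else from the abstract yoga.

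Properties (1), (3), (4), and (9) are essentially built into the construction: the $f^* \dashv f_*$ adjunction, the closed symmetric monoidal structure, Nisnevich descent, and $\AA^1$-invariance are imposed from the start as part of the motivic model. The adjunction $f_! \dashv f^!$ in (2) is constructed via Nagata compactification: one factors a separated finite-type morphism $f$ as $f = \bar f \circ j$ with $j$ an open immersion and $\bar f$ proper, then sets $f_! = \bar f_* \circ j_\#$, where $j_\#$ is the left adjoint of $j^*$. Consistency of this definition (independence of the compactification, compatibility with composition) relies on the localization triangle.

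Once $f_!$ exists, the base change formulas (5) and the projection formula (6) can be checked separately on open immersions (where $f_! = j_\#$) and proper morphisms (where $f_! = f_*$), using smooth and proper base change. The closed-monoidal versions of the projection formula (the two $\iHom$ identities) then follow by passing to adjoints. Relative purity (8) is proven via deformation to the normal cone, reducing the computation of $f^!\ring$ for a smooth $f$ of relative dimension $d$ to the case of a vector bundle, where it is determined by the Thom isomorphism and Tate stability. The projective bundle formula (10) is an induction on $n$ using the localization triangle for $\PP^{n-1} \hookrightarrow \PP^n$, combined with the identification of the Tate twist with the reduced motive of $\PP^1$.

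The main obstacle is the localization triangle (7) over a general Dedekind base. Over a field this goes back to Voevodsky and Morel and is standard, but over a Dedekind ring one must handle the interaction between generic and special fibres carefully, since the base is no longer regular of dimension zero and the usual gluing arguments need to be adjusted; this is precisely the technical heart of Spitzweck's paper \cite{spitzweck}, and it is the one step where a genuine argument, rather than formal verification of abstract axioms, is required. Everything else in the proposition can then be extracted as a corollary of the abstract six-functor package once Spitzweck's localization triangle is in place.
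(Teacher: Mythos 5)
You do not need to reconstruct a proof here: the paper gives none and simply recalls these facts from Spitzweck's construction (\S 10 of \cite{spitzweck}) together with the standard six-functor machinery, and your outline — $\DM(-,\ring)$ as modules over Spitzweck's motivic Eilenberg--MacLane spectrum in $\SH$, $f_!$ via Nagata compactification, base change and projection formulas checked on open immersions and proper maps, purity via deformation to the normal cone, the projective bundle formula from the orientation and localization — is exactly that standard route, so your proposal matches the paper's (implicit) approach. One small correction of emphasis: the localization triangle (7) is not where the Dedekind-base difficulty sits, since gluing holds in $\SH$ over any noetherian base (Morel--Voevodsky, Ayoub) and descends formally to module categories; the genuinely delicate content of \cite{spitzweck} is the construction of the $E_\infty$ motivic cohomology spectrum itself, with its base-change (cartesian) and orientation properties, from which property (7) for $\DM$ is then formal.
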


\begin{defn}
Let $\sX$ be a $\baseS$-finite type scheme. The category $\DTM(\sX, \ring)$ of \emph{Tate motives} is defined as the presentable full subcategory of $\DM(\sX, \ring)$ generated by $\ring(n)$ for $n \in \ZZ$.
\end{defn}

By \cite[Lemma 2.14]{reduced_motives}, there exists a graded algebra $\mathrm{A} \in \grMod_\ring$ such that 
\[
\DTM(\baseS, \ring) \cong \Mod_{\mathrm{A}}(\grMod_\ring)
\]
and, by \cite[Lemma 2.17]{reduced_motives}, this algebra admits an augmentation map $\mathrm{A} \to \ring$. This allows us to define a functor
\[
    \DTM(\baseS) \cong \Mod_\mathrm{A}(\grMod_\ring) \xrightarrow{ \ring \otimes_\mathrm{A} -} \Mod_{\ring}(\grMod_\ring) \cong \grMod_\ring.
\]

\begin{defn}
For $\sX$ a finite type scheme over $\baseS$, we define the category of \emph{reduced motives} as
\[
    \DM_\redm(\sX) \coloneqq \DM(\sX) \otimes_{\DTM(\baseS)} \grMod_\ring.
\]

We similarly define $\DTM_\redm(\sX)$ the category of reduced Tate motives as the presentable full subcategory of $\DM_\redm(\sX)$ generated by the objects $\ring(n)$ for $n \in \ZZ$.
\end{defn}

It follows from the definition that there exists a natural transformation
\[
    \redm \colon \DM(-) \to \DM_\redm(-)
\]
which we call the \emph{reduction functor}.

\begin{rem}
    The $\AA^1$-invariance implies that $\pi^* \colon \DTM_\redm(\sX) \xrightarrow{\simeq} \DTM_\redm(\AA^n_{\sX})$ is an equivalence for any finite type scheme $\sX$, where $\pi \colon \AA^n_\sX \to \sX$ is the structure morphism. 
\end{rem}

From now on, let $\DM_{(\redm)}$ denote either $\DM$ or $\DM_{\redm}$ and $\DTM_{(\redm)}$ denote either $\DTM$ or $\DTM_\redm$.

\begin{rem}\label{rem:weight_structure}
By \cite[Definition and Lemma 4.18]{reduced_motives}, there exists a weight structure on the category $\DTM_{(\redm)}(\baseS, \ring)$ whose heart $\DTM_{(\redm)}(\baseS, \ring)^{\heartsuit_w}$ is the additive retract-closed subcategory generated by $\ring[2n](n)$ for $n \in \ZZ$.
\end{rem}

\begin{example}\label{example:affine_mult}
    As computed in \cite[Lemma 3.3]{reduced_motives}, if $\sX = \AA^r_{\baseS} \times (\GG_{\m, \baseS})^s$, then 
    \[
        \DTM(\sX) = \Mod_{\Sym(\ring (-1) [-1])^{\otimes s}}(\DTM(\baseS))
    \]
    and
    \[
        \DTM_\redm(\sX) = \Mod_{\Sym(\ring \langle -1\rangle [-1])^{\otimes s}}(\grMod_\ring).
    \]
    In particular, for any $i, j \in \ZZ$ and for any $n \neq 0$, one has
    \[
        \Map_{\DM_\redm(\sX)}(\ring[2i](i), \ring[2j+n](j)) = 0.
    \]
\end{example}

From a six functor formalism point of view, $\DM_\redm$ is simply the composition
\[
    \Corr(\Sch_{\baseS}) \xrightarrow{\DM} \Mod_{\DTM(\baseS)}(\Pr^\L_\st) \xrightarrow{ - \otimes_{\DTM(\baseS)} \grMod_\ring} \Mod_{\grMod_\ring}(\Pr^\L_\st).
\]

Since the (relative) tensor product with an idempotent algebra is symmetric monoidal and preserves small colimits, the funtor $\DM_\redm$ still defines a six functor formalism. 

\begin{prop}
    The functor $\DM_\redm \colon \Corr(\Sch_{\baseS}) \to \Mod_{\grMod_\ring}(\Pr^\L_\st)$ satisfies all of the properties listed in Proposition \ref{prop:six_functors_DM}.
\end{prop}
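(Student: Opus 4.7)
The plan is to transfer each of the ten properties from $\DM$ to $\DM_\redm$ using the observation recorded just before the proposition, namely that $\DM_\redm$ is defined as the composition
\[
\Corr(\Sch_\baseS) \xrightarrow{\DM} \Mod_{\DTM(\baseS)}(\Pr^\L_\st) \xrightarrow{-\otimes_{\DTM(\baseS)}\grMod_\ring} \Mod_{\grMod_\ring}(\Pr^\L_\st),
\]
together with the fact that the base-change functor $-\otimes_{\DTM(\baseS)}\grMod_\ring$ is symmetric monoidal and colimit-preserving, since $\grMod_\ring$ is an idempotent commutative algebra over $\DTM(\baseS)$.

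The bulk of the properties transfer automatically. The adjunctions (1) and (2), the closed symmetric monoidal structure (3), the base change formulas (5), and the projection formulas (6) are formally encoded in the data of a symmetric monoidal functor out of $\Corr(\Sch_\baseS)$, and are thus preserved by any symmetric monoidal, colimit-preserving post-composition. The localization triangles (7) are natural transformations preserved under base change, relative purity (8) is a natural isomorphism also preserved, and the projective bundle formula (10) is a natural isomorphism in $\DM(\sX)$ that survives under the (exact, symmetric monoidal) reduction functor $\redm$.

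The two remaining properties require slightly more care. For Nisnevich descent (4), the condition is that $\DM^*$ sends Nisnevich distinguished squares to pullback squares in $\Pr^\L_\st$. These finite square diagrams admit an equivalent description as pushout squares in $\Pr^\L_\st$ (by the biCartesian property of presentable stable $\infty$-categories applied to distinguished squares), and pushouts are preserved by the left adjoint base-change functor; hence $\DM_\redm^*$ inherits the Nisnevich sheaf property. For $\AA^1$-invariance (9), fully faithfulness of $\pi^*$ is the assertion that the unit $\id \to \pi_*\pi^*$ is an equivalence; the main obstacle is that $\pi_*$, being a right adjoint, does not a priori commute with base change. The approach is to reduce to a set of compact generators of $\DM_\redm(\sX)$ lying in the essential image of $\redm$, on which mapping spectra in $\DM_\redm$ can be computed from the corresponding spectra in $\DM$ after tensoring over $\DTM(\baseS)$ with $\grMod_\ring$; the full faithfulness of $\pi^*$ on $\DM$ then yields the analogous statement on $\DM_\redm$. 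This last step is where I expect the most delicate verification, since it is the only place where one cannot argue purely by a formal functoriality of the base change.
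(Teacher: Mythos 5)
Your overall strategy is the same as the paper's: the proof in the paper is a one-line appeal to the description of $\DM_\redm$ as the composite of the six-functor formalism $\DM$ with the symmetric monoidal, colimit-preserving base change $-\otimes_{\DTM(\baseS)}\grMod_\ring$, with all the actual verifications delegated to \cite{reduced_motives} (Proposition 3.7(1), Proposition 3.9, Corollaries 3.10 and 3.11). For the properties that are encoded by left adjoints and $\DTM(\baseS)$-linear natural equivalences, your formal transfer is exactly what is intended.

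However, your treatment of the two ``delicate'' points is not yet a proof. For descent (4), the step ``the squares are biCartesian, pushouts are preserved, hence $\DM_\redm^*$ is a Nisnevich sheaf'' is a non sequitur: the sheaf condition is that the tensored square is a \emph{pullback}, and knowing it is a pushout does not give this, since pushout squares in $\Pr^\L_\st$ are not pullbacks in general. What rescues the argument is that a Nisnevich distinguished square carries split localization data: $j^*$ admits a fully faithful, colimit-preserving, $\DTM(\baseS)$-linear left adjoint $j_\#$, and the induced functor on the kernels is an equivalence. These adjunctions, being internal to $\Mod_{\DTM(\baseS)}(\Pr^\L_\st)$ with specified invertible (co)units, persist after applying $-\otimes_{\DTM(\baseS)}\grMod_\ring$, and one then checks that such a split pattern is again a limit diagram; this extra verification is precisely what the cited results of \cite{reduced_motives} supply. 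For $\AA^1$-invariance (9), your generator-and-mapping-spectra argument can be completed, but it silently uses a nontrivial input, namely the comparison of mapping spectra under reduction for compact objects (again an ingredient of \cite{reduced_motives}); note also that a purely formal route is available and avoids any computation: since $\pi$ is smooth, $\pi^*$ has a $\DTM(\baseS)$-linear left adjoint $\pi_\#$, full faithfulness of $\pi^*$ is equivalent to invertibility of the counit $\pi_\#\pi^*\to\id$, and internal adjunctions together with invertible $2$-cells are preserved by the base change, so the statement transfers directly from $\DM$. A similar remark applies to the places where you invoke right adjoints implicitly (e.g.\ $p_*$ in (10) should be replaced by $p_!$ using properness, and relative purity (8) is best transferred via $f_\#$), since the reduction functor is only known a priori to commute with the left adjoints.
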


\begin{proof}
    All properties follows from the description of $\DM_\redm$ as a composition of a six functor formalism with a symmetric monoidal continuous functor. This is just an amalgamation of \cite[Proposition 3.7(1), Proposition 3.9, Corollary 3.10 and Corollary 3.11]{reduced_motives}.
\end{proof}

We are going to end this subsection with a comparison result from motives to reduced motives.

\begin{prop}[Proposition 4.20, \cite{reduced_motives}]
    Suppose $\CH^\bullet(\baseS)_\ring = \CH^0(\baseS)_\ring$ (for example, $\baseS$ is the spectrum of a field or $\baseS = \Spec(\ZZ)$). Then the reduction functor gives an isomorphism
    \[
        \Hom_{\DM(\baseS, \ring)}(\ring[2m](m), \ring[2n](n)) \to \Hom_{\DM_{\mathrm{r}}(\baseS, \ring)}(\mathrm{r}(\ring)[2m](m), \mathrm{r}(\ring)[2n](n)).
    \]
    for any $n, \, m \in \ZZ$.
\end{prop}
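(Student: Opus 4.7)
The plan is to restrict the question to the Tate subcategory and use the explicit algebraic presentation of $\DTM(\baseS)$ reviewed earlier in this section. Since both $\ring[2m](m)$ and $\ring[2n](n)$ lie in $\DTM(\baseS)$, and since $\DTM_\redm(\baseS)$ is a full subcategory of $\DM_\redm(\baseS)$, it suffices to show that the induced functor $\DTM(\baseS) \to \DTM_\redm(\baseS)$ is an isomorphism on the Hom-groups in question.

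Via the equivalence $\DTM(\baseS) \cong \Mod_{\mathrm{A}}(\grMod_\ring)$ and the identification $\DTM_\redm(\baseS) \cong \grMod_\ring$, the reduction functor corresponds to base change along the augmentation $\mathrm{A} \to \ring$. Under this dictionary, the generator $\ring(n)[2n]$ is sent to the shifted free module of rank one $\ring\langle n\rangle[2n]$, so
\[
    \Hom_{\DTM_\redm(\baseS)}(\ring\langle m\rangle[2m], \ring\langle n\rangle[2n]) = \begin{cases} \ring & \text{if } m = n, \\ 0 & \text{otherwise.} \end{cases}
\]
On the unreduced side, the standard identification of motivic cohomology in bidegree $(2q,q)$ with higher Chow groups gives
\[
    \Hom_{\DM(\baseS,\ring)}(\ring[2m](m), \ring[2n](n)) \cong \CH^{n-m}(\baseS)_\ring,
\]
which under the hypothesis $\CH^\bullet(\baseS)_\ring = \CH^0(\baseS)_\ring$ (and connectedness of $\baseS$) equals $\ring$ when $m = n$ and vanishes otherwise. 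Hence both sides are abstractly isomorphic to the same $\ring$-module.

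To conclude that $\mathrm{r}$ realises this isomorphism, one uses that the reduction functor is symmetric monoidal and therefore unit-preserving. When $m = n$, the identity morphism of $\ring[2m](m)$ generates the Hom group on either side, so the induced $\ring$-linear map must send $1$ to $1$ and is the identity; when $m \neq n$ both sides vanish and there is nothing to verify. The main subtle point is the correct identification of the $(0,0)$-bidegree piece of $\mathrm{A}$ with $\CH^0(\baseS)_\ring = \ring$ and the fact that on this piece the augmentation is the identity; this follows from the construction of the augmentation in \cite[Lemma 2.17]{reduced_motives} as a map of unital graded $\ring$-algebras. Everything else reduces to grading bookkeeping inherited from the Tate weight structure described in Remark \ref{rem:weight_structure}.
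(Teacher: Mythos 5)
Your proof is correct and takes essentially the same route as the paper: both arguments compute the unreduced side as $\CH^{n-m}(\baseS)_\ring$ (the paper first reduces to $m=0$ by twisting) and the reduced side through the identification $\DTM_\redm(\baseS) \simeq \grMod_\ring$, finding $\ring$ when $m=n$ and $0$ otherwise. Your closing check that the reduction functor preserves the unit, so that the induced $\ring$-linear map $\ring \to \ring$ is the identity, merely makes explicit a point the paper leaves implicit.
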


\begin{proof}
    Since shift and Tate twists are isomorphisms, one may suppose $m = 0$. In this case, we have a usual comparison for the left hand side:
     \[
         \Hom_{\DM(\baseS, \ring)}(\ring, \ring[2n](n)) \cong \CH^n(\baseS, \ring).
    \]
    
    On the other hand, by definition we have
    \[
        \Hom_{\DM_{\mathrm{r}}(\baseS, \ring)}(\ring, \ring[2n](n)) \cong \Hom_{\grMod_\ring}(\ring, \ring[2n](n)).
    \]
    
    Both $\Hom$-sets are trivial if $n \neq 0$ and isomorphic to $\ring$ if $n=0$.
\end{proof}

By definition, $\DTM(\baseS)^{\heartsuit_w}$ is the smallest subcategory containing the objects $\ring[2n](n)$ and stable under direct sum and retractions, therefore a dévissage argument gives the following.

\begin{cor}\label{cor:comparison_base}
For any $M, \, N \in \DTM(\baseS)^{\heartsuit_w}$, one has an isomorphism
\[
        \Hom_{\DM(\baseS, \ring)}(M,N) \to \Hom_{\DM_{\mathrm{r}}(\baseS, \ring)}(M,N).
\]
\end{cor}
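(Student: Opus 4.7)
The plan is to run a standard dévissage starting from the previous proposition, which already handles the generating objects $\ring[2n](n)$, and then to propagate the isomorphism through finite direct sums and retracts, using that $\DTM(\baseS)^{\heartsuit_w}$ is by Remark \ref{rem:weight_structure} precisely the additive retract-closed subcategory generated by those objects.

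More concretely, I would let $\phi_{M,N}$ denote the natural comparison map induced by the reduction functor. First I would fix one variable, say $N$, to be a generator $\ring[2n](n)$. The class of $M \in \DTM(\baseS)^{\heartsuit_w}$ for which $\phi_{M,N}$ is an isomorphism contains all generators $\ring[2m](m)$ by the preceding proposition. Since $\Hom_{\DM(\baseS,\ring)}(-,N)$ and $\Hom_{\DM_\redm(\baseS,\ring)}(-,N)$ both turn finite coproducts into products, and $\phi_{-,N}$ is a natural transformation, this class is stable under finite direct sums. It is also stable under retracts in the first variable, since a retract of an isomorphism of abelian groups is an isomorphism. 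Hence it contains all of $\DTM(\baseS)^{\heartsuit_w}$.

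Then I would repeat the same argument in the second variable: having established that $\phi_{M, \ring[2n](n)}$ is an isomorphism for every $M \in \DTM(\baseS)^{\heartsuit_w}$, the class of $N$ for which $\phi_{M,N}$ is an isomorphism contains all generators, is stable under finite direct sums (using that $\Hom(M,-)$ preserves them on both sides) and under retracts, hence equals $\DTM(\baseS)^{\heartsuit_w}$. This gives the corollary.

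There is no real obstacle here; the only thing to be a bit careful about is that "additive retract-closed subcategory generated by $X$" really does mean the closure under finite direct sums and retracts, which is exactly what the two-step dévissage exploits. One should also note implicitly that the comparison map is functorial in both variables, so the transfer from generators to direct sums and retracts in each slot is automatic.
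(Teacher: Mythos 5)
Your proposal is correct and is essentially the paper's argument: the paper deduces the corollary from the preceding proposition by exactly the dévissage you describe, using that $\DTM(\baseS)^{\heartsuit_w}$ is generated by the objects $\ring[2n](n)$ under finite direct sums and retracts. Your write-up just makes the two-variable bookkeeping explicit, which the paper leaves implicit.
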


\subsection{Motives on stacks}

We are going to describe the extension of the functors $\DM$ and $\DM_\redm$ to algebraic stacks. This was done more generally in \cite{six_functor_formalism}, \cite{non-representable}, \cite{khan-lisse} and \cite{morel-voevodsky_stacks}. We will rewrite some arguments for completeness, and we still work with the same hypothesis for the base scheme $\baseS$. We will denote by $\mathrm{AStk}_\baseS$ the category of algebraic stacks of finite type over $\baseS$.

Once again, denote by $\DM_{(\redm)}$ to the six functor formalism of (reduced) motives. Recall that the main functorial properties are still true in the stacky context.

\begin{prop} \label{prop:six_functors_DM_stacks}
    The 6-functor formalisms $\DM_{(\redm)} \colon \mathrm{Corr}(\Sch_\baseS) \to \Pr^\L_\st$ extend to the category $\mathrm{Corr}(\mathrm{AStk_\baseS})$ and satisfy the following properties:
    \begin{enumerate}[(1)]
        \item For any morphism $f \colon \Y \to \X$, the functor $f^*$ admits a right adjoint $f_*$;
        \item For any morphism $f \colon \Y \to \X$, the functor $f_!$ admits a right adjoint $f^!$;
        \item For any $\X$ algebraic stack of finite type, the category $\DM_{(\redm)}(\X)$ is closed symmetric monoidal;
        \item \emph{Descent:} The functor $\D^* \colon \mathrm{AStk}^{\op}_{\baseS} \to \Pr^\L_\st$ is an NL sheaf\footnote{The definition of the NL topology is given in \cite[Definition 2.1]{non-representable}.}. In particular, given any smooth-Nisnevich atlas $\pi \colon \sX \to \X$, one has
        \[
            \DM_{(\redm)}(\X) = \lim_n \DM_{(\redm)}(\sX^{n/\X}).
        \]
        \item \emph{Base change formulas:} For any Cartesian diagram
        \[
        \begin{tikzcd}
            \Y' \ar["g'"']{d} \ar["f'"]{r} & \X' \ar["g"]{d} \\
            \Y \ar["f"']{r} & \X,
        \end{tikzcd}
        \]
        there exists a natural isomorphism $g_!f'^* \to f^*g'_!$. Dually, there is also a natural isomorphism $g^!f'_* \to f_* g'^!$.
        \item \emph{Projection formulas:} For any $f \colon \Y \to \X$ and for any motives $M \in \DM_{(\redm)}(\X)$ and $N \in \DM_{(\redm)}(\Y)$, there is a natural isomorphism $M \otimes f_!(N) \cong f_!(f^*M \otimes N)$. Dually, for any $M, N \in \DM_{(\redm)}(\X)$ one has
        \[
           f^!\iHom(M,N) \cong \iHom(f^*M, f^!N)
        \]
        and for any $M, \, N \in \DM_{(\redm)}(\Y)$ one has
        \[
            \iHom(f_! M, N) \cong f_*\iHom(M, f^!N).
        \]
        \item \emph{Localization:} Let $i \colon \mathscr{Z} \to \X$ be a closed immersion of finite type algebric stacks and let $j \colon \mathscr{U} \to \X$ be the open complement, then for any $M \in \DM_{(\redm)}(\X)$ one has distinguished triangles
        \[
            j_!j^*M \to M \to i_!i^* M
        \] 
        and
        \[
            i_*i^!M \to M \to j_*j^! M.
        \]
        \item \emph{Relative purity:} For any smooth morphism $f \colon \Y \to \X$ there exists a natural isomorphism $f^! \cong f^* \otimes f^!(\ring)$. Moreover, if $f$ is of relative dimension $d$, then $f^!(\ring) = \ring[2d](d)$.
        \item \emph{Proper pushforward:} For any proper representable morphism $f \colon \Y \to \X$, there exists a natural isomorphism
        \[
            \alpha_f \colon f_! \xrightarrow{\simeq} f_*.
        \]
        \item \emph{$\AA^1$-invariance:} For any algebraic stack of finite type $\X$, the morphism
        \[
            \pi^* \colon \DM_{(\redm)}(\X) \to \DM_{(\redm)}(\AA^1_{\X})
        \]
        is fully faithful, where $\pi \colon \AA^1_\X \to \X$ is the structure morphism.
        \item \emph{Projective bundle formula:} Let $\X$ be a smooth finite type algebraic stack and $p \colon \mathscr{P} \to \X$ be a representable projective bundle of dimension $n$. Then there exists a canonical isomorphism 
        \[
            p_*(\ring_{\mathscr{P}}) \xrightarrow{\simeq}\bigoplus_{i=0}^n \ring_\X[2i](i).
        \]
    \end{enumerate}
\end{prop}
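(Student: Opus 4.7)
The plan is to extend $\DM_{(\redm)}$ to finite-type algebraic stacks by smooth-Nisnevich descent, following the general framework of \cite{six_functor_formalism} and \cite{non-representable}. The crucial starting input is that on schemes both $\DM$ and $\DM_\redm$ already satisfy (smooth-)Nisnevich descent: for $\DM$ this is Proposition \ref{prop:six_functors_DM}(4), while for $\DM_\redm$ the sheaf condition is preserved under postcomposition with the symmetric monoidal colimit-preserving functor $-\otimes_{\DTM(\baseS)} \grMod_\ring$.

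For a finite-type algebraic stack $\X$, fix a smooth-Nisnevich atlas $\pi \colon \sX \to \X$ with $\sX \in \Sch_\baseS$ and set $\DM_{(\redm)}(\X) \coloneqq \lim_n \DM_{(\redm)}(\sX^{n/\X})$, the limit over the \v{C}ech nerve. Independence of the choice of atlas follows from NL-descent. Pullbacks $f^*$ are functorial in the limit, and $f_*$ exists by the adjoint functor theorem. The extension of $f_!$ and $f^!$ to arbitrary (possibly non-representable) morphisms uses the general machinery of \cite{non-representable}: smooth-Nisnevich descent together with localization and relative purity on the scheme level suffice to glue $f_!$ coherently and to establish base change against arbitrary pullbacks.

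With the formalism in place, properties (1)--(11) are verified by reducing to the scheme case through an atlas. Properties (1)--(3) are immediate from presentability and the symmetric monoidal limit diagram; (4) is by construction. Properties (5), (6), (8) and (10) are local on source and target, so a diagram chase using compatible atlases reduces them to Proposition \ref{prop:six_functors_DM}. For (7), the preimage $\sZ \subset \sX$ of the closed substack along $\pi$ yields a scheme-level localization triangle that glues via Nisnevich descent. For (9), the representability hypothesis ensures $\sX \times_\X \Y \to \sX$ is a proper morphism of schemes, so $f_! = f_*$ holds on the atlas and propagates by descent. For (11), the pullback of the projective bundle along $\pi$ is a projective bundle of schemes, to which Proposition \ref{prop:six_functors_DM}(11) applies directly.

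The main obstacle is the coherent extension of $f_!$ to \emph{non-representable} morphisms together with the associated base change formula; this is not a formal consequence of Nisnevich descent and is exactly the content of \cite[\S 3]{non-representable}. It relies essentially on the fact that $\DM_{(\redm)}$ satisfies smooth descent on schemes, which is itself a consequence of Nisnevich descent combined with $\AA^1$-invariance and relative purity.
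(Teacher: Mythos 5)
Your overall strategy is the same as the paper's: the heavy lifting (the coherent extension of $f_!$, $f^!$ to non-representable morphisms, base change, projection formula) is delegated to Chowdhury--D'Angelo, and the remaining properties are checked by descent along a smooth-Nisnevich atlas, exactly as the paper does for localization and $\AA^1$-invariance (the paper cites \cite[Proposition 3.25]{non-representable} for (1),(2),(3),(5),(6), \cite[Remark 2.16]{non-representable} for (4), and uses \cite[Lemma 4.2, Theorem A.1]{nisnevic_smooth_stacks} to know that smooth-Nisnevich atlases exist and are NL-covers). Your closing paragraph correctly identifies that (5) and (6) for non-representable morphisms are not formal consequences of descent, which resolves the tension with your earlier claim that they follow from a ``diagram chase''.

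The one genuine weak spot is relative purity (8). You list it among the properties that are ``local on source and target'' and reduce to the scheme case by choosing compatible atlases, but for a \emph{non-representable} smooth morphism this reduction fails as stated: the relative dimension of the stacky morphism does not match that of any scheme-level model obtained from atlases. For instance, $f \colon \Bs\GG_m \to \baseS$ is smooth of relative dimension $-1$, while the composite of the atlas $\baseS \to \Bs\GG_m$ with $f$ is the identity; the assertion $f^!(\ring) \cong \ring[-2](-1)$ is not visible from Proposition \ref{prop:six_functors_DM}(8) applied on atlases, and one needs a purity transformation constructed at the level of the stacky formalism. The paper handles this by citing \cite[Corollary 4.25]{non-representable}, which applies because $\DM$ (hence $\DM_\redm$) is orientable; your argument should invoke that result (or reprove it using orientability) rather than a naive descent reduction. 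With that replacement, your proof matches the paper's.
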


\begin{proof}
    Chowdhury and D'Angelo in \cite[Proposition 3.25]{non-representable} extended $\mathrm{SH}$ to algebraic stacks and their proof works \emph{mutatis mutandis} to $\DM$. In particular, this gives properties (1), (2), (3), (5) and (6). Property (4) is simply \cite[Remark 2.16]{non-representable}. Property (8) is \cite[Corollary 4.25]{non-representable}, since $\DM$ is orientable.

    The other properties may be proven taking an atlas which is an NL-cover. By \cite[Lemma 4.2]{nisnevic_smooth_stacks}, a smooth presentation $\pi \colon \sX \to \X$ is an NL-cover if and only if it is smooth-Nisnevich cover, and by \cite[Theorem A.1]{nisnevic_smooth_stacks} such presentations always exists. We will detail (7) and (10) as examples.

    To prove property (7), let $\sX \to \X$ be a smooth-Nisnevich cover of $\X$, and denote by $\sX_n$ the terms in the \v{C}ech nerve. Similarly, let $j_n \colon \sU_n \hookrightarrow \sX_n \hookleftarrow \sZ_n \colon i_n$ be the pullback of $j \colon \U \hookrightarrow \X \hookleftarrow \Z \colon i$. Let $M \in \DM_{(\redm)}(\X, \ring)$ be a motive, and denote by $M_n$ the pullback to $\sX_n$. Since the localization property is true for schemes, one has distinguished triangles
    \[
        j_{n!}j_n^* M_n \to M_n \to i_{n!}i_n^* M_n.
    \]

    Since we are taking smooth-Nisnevich covers, we have by (4):
    \[
        \DM_{(\redm)}(\X, \ring) \cong \lim \DM_{(\redm)}(\sX_n, \ring),
    \]
    in other words, $M$ is the limit of $M_n$. Taking the limit of the distinguished triangles and using base change, one sees that the sequence
    \[
        j_!j^* M \to M \to i_!i^* M
    \]
    is a distinguished triangle.

    To prove property (10), recall that $\pi^*$ is fully faithful if and only if the unit $\id_{\X} \to \pi_*\pi^*$ is an isomorphism. Let $\sX \to \X$ be a smooth-Nisnevich cover. After pulling back to $\sX$, the unit becomes $\id_\sX \to \pi_{\sX *}\pi_{\sX}^{*}$ which is an isomorphism, since one already knows $\AA^1$-invariance for schemes. Since pullback with respect to a smooth-Nisnevich cover is conservative, the unit $\id_{\X} \to \pi_*\pi^*$ is an isomorphism, concluding our proof.
\end{proof}

As explained in the proof, to compute the category $\DM_{(\redm)}(\X, \ring)$, one first has to choose a smooth-Nisnevich cover $\sX \to \X$. In this case, one has
\[
    \DM_{(\redm)}(\X, \ring) = \lim \DM_{(\redm)}(\sX^{n/\X}, \ring).
\]

\begin{rem}
    One may ask if extending the motives to stacks before or after taking the reduction is the same thing. In other words, if 
    \[
        \left(\lim \DM(\sX^{n/\X}, \ring)\right) \otimes_{\DTM(\baseS, \ring)} \grMod_\ring \cong \lim (\DM(\sX^{n/\X}, \ring) \otimes_{\DTM(\baseS, \ring)} \grMod_\ring).
    \] 
    Since we are taking pullback with respect to smooth morphisms and those admit left adjoint, both limits above are equivalent to the colimits of their left adjoint (where one might use \cite[Proposition 2.1]{reduced_motives} to ensure adjunction after taking the tensor product) and therefore they commute with tensor product.
\end{rem}

Even though they exist for any algebraic stack, smooth-Nisnevich covers are not as abundant as one might hope for. In general, the quotient map $\baseS \to \Bs \G$ is not necessarily a smooth-Nisnevich cover. For this reason, we will mainly restrict ourselves to the case of reductive groups. In this case, we are going to begin studying the torus and the Borel, and then studying the reductive group itself. To get a better understanding of smooth-Nisnevich covers, we will state a lemma which slighly generalizes \cite[Example A.3]{nisnevic_smooth_stacks}.

\begin{lemma}\label{lemma:criterion}
    Let $\G' \subseteq \G$ be an inclusion between two smooth $\baseS$-groups such that the induced map $\H^1((\Spec \mathrm{K})_{\mathrm{et}}, \G') \to \H^1((\Spec \mathrm{K})_{\mathrm{et}}, \G)$ is surjective for every morphism $\Spec \mathrm{K} \to \baseS$ with $\mathrm{K}$ a field, then the map $\Bs \G' \to \Bs \G$ is a representable smooth-Nisnevich cover.
\end{lemma}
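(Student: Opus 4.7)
The plan is to verify the three defining conditions of a representable smooth-Nisnevich cover in turn: representability, smoothness, and the Nisnevich lifting property at field-valued points. The first two will follow from standard properties of the quotient $\G/\G'$, and the third will translate into precisely the surjectivity hypothesis on $\H^1$.

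First I would check representability. Given any morphism $\sX \to \Bs \G$ from a scheme, classifying a $\G$-torsor $P \to \sX$, the fiber product $\sX \times_{\Bs \G} \Bs \G'$ parametrizes $\G'$-reductions of $P$. I would identify this with the quotient $P/\G'$, where $\G'$ acts on $P$ via its inclusion into $\G$: a $\sY$-point of the fiber product corresponds to a $\G'$-equivariant map $Q \to P$ from a $\G'$-torsor $Q$ over $\sY$, which is the same as a section of $P/\G' \to \sX$ over $\sY$. Since $\G'$ acts freely on $P$ and $P$ is étale-locally of the form $\G \times \sX$, the quotient $P/\G'$ exists as a scheme, étale-locally isomorphic to $(\G/\G') \times \sX$. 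Smoothness of $\Bs \G' \to \Bs \G$ then reduces to smoothness of $\G/\G'$, which follows from $\G \to \G/\G'$ being a smooth $\G'$-torsor combined with smoothness of $\G$.

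For the Nisnevich condition, I would need to show that for every $\Spec \mathrm{K} \to \Bs \G$ with $\mathrm{K}$ a field, the base change $\Spec \mathrm{K} \times_{\Bs \G} \Bs \G' \to \Spec \mathrm{K}$ admits a section. Unwinding the identification in the previous paragraph, such a section amounts to a $\G'$-reduction over $\mathrm{K}$ of the given $\G$-torsor, equivalently to a preimage in $\H^1((\Spec \mathrm{K})_\et, \G')$ of its class in $\H^1((\Spec \mathrm{K})_\et, \G)$. The hypothesis of the lemma provides exactly such a preimage.

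No step is particularly deep; the genuine content of the lemma is the translation between the nonabelian cohomological surjectivity and the geometric Nisnevich lifting property. The main care to take will be in the identification of the fiber product with the quotient scheme $P/\G'$ and in the justification that this quotient is smooth, but both are classical consequences of the standing smoothness assumptions on $\G'$ and $\G$, so no real obstacle is expected.
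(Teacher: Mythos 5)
Your proposal is correct and its key step — translating the existence of a section over a field-valued point into a $\G'$-reduction of the corresponding $\G$-torsor, i.e.\ into the assumed surjectivity on $\H^1$ — is exactly the paper's argument; the paper simply takes representability and smoothness (via the quotient $\G/\G'$) for granted, which you spell out explicitly.
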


\begin{proof}
    A map from $\Spec \mathrm{K}$ to $\Bs \G$ corresponds to a $\G$-torsor $\mathcal{G} \to \Spec \mathrm{K}$. Since the the map on $\H^1$ is surjective, there exists a $\G'$-torsor $\mathcal{G}' \to \mathrm{K}$ such that $\mathcal{G} = \mathcal{G}' \times^{\G'} \G$, but this means that the morphism lifts to a morphism $\Spec \mathrm{K} \to \Bs \G'$.
\end{proof}

\begin{example}\label{ex:GL_n}
    By Hilbert's 90th theorem, the quotient map $\sX \to [\sX/\GL_{n, \baseS}]$ is a smooth-Nisnevich cover. This gives, in particular, an easy way to compute the sheaves on the classifying space of a split torus $\T$.
\end{example}

\begin{example}
    For the same reason, the map $\sX \to [\sX/\mathrm{SL}_{n, \baseS}]$ is also a smooth-Nisnevich cover. 
\end{example}

\begin{example}\label{ex:G_a}
    The quotient map $\sX \to [\sX/\GG_{a, \baseS}]$ is a smooth-Nisnevich cover. The same is also true for any group which is an extension of copies of $\GG_a$, in other words, for any split unipotent group $\sU$.
\end{example}

\begin{prop}
    Let $\Bs$ be a split solvable group, then $\baseS \to \Bs \Bs$ is a smooth-Nisnevich cover.
\end{prop}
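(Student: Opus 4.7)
The plan is to invoke Lemma \ref{lemma:criterion} with $\G' = 1$ the trivial subgroup of $\G = \Bs$. This reduces the statement to showing that for every field-valued point $\Spec K \to \baseS$, every $\Bs$-torsor on $\Spec K$ is trivial, i.e.\ that $\H^1((\Spec K)_\et, \Bs) = *$. Representability of $\baseS \to \Bs\Bs$ is automatic, since its base change along any scheme is a $\Bs$-torsor, which is representable as $\Bs$ is affine.

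I would proceed by induction on the length of a composition series
\[
    1 = \Bs_0 \subseteq \Bs_1 \subseteq \cdots \subseteq \Bs_r = \Bs
\]
of $\Bs$ by closed normal subgroups, with each quotient $\Bs_i/\Bs_{i-1}$ isomorphic to $\GG_{m,\baseS}$ or $\GG_{a,\baseS}$; such a series exists by the very definition of a split solvable group. The case $r=0$ is trivial. For the inductive step, pick a short exact sequence of $\baseS$-group schemes
\[
    1 \to \Bs' \to \Bs \to \Bs'' \to 1,
\]
with $\Bs'$ split solvable of shorter length and $\Bs'' \in \{\GG_{m,\baseS}, \GG_{a,\baseS}\}$. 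Applying non-abelian étale cohomology on $\Spec K$ yields the standard exact sequence of pointed sets
\[
    \H^1((\Spec K)_\et, \Bs') \to \H^1((\Spec K)_\et, \Bs) \to \H^1((\Spec K)_\et, \Bs''),
\]
exact at the middle term. By Hilbert's Theorem 90 (as used in Example \ref{ex:GL_n}) the right-hand side vanishes when $\Bs'' = \GG_m$, and by the vanishing of étale cohomology with coefficients in $\GG_a$ over a field (used in Example \ref{ex:G_a}) it vanishes when $\Bs'' = \GG_a$. The left-hand side vanishes by the inductive hypothesis. Hence the middle set is trivial, as desired.

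The only real subtlety is the exactness of the non-abelian cohomology sequence, which is a classical fact for any short exact sequence of étale sheaves of groups whose first term is normal in the second — a condition secured by our choice of composition series. Everything else is a bookkeeping exercise reducing the general split solvable case to the two rank-one building blocks $\GG_m$ and $\GG_a$ already handled in the preceding examples.
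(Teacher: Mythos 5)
Your proof is correct and follows essentially the same route as the paper: reduce via Lemma \ref{lemma:criterion} (with trivial $\G'$) to showing $\H^1((\Spec \mathrm{K})_{\et}, \Bs)$ vanishes for every field $\mathrm{K}$, and deduce this from the vanishing for $\GG_a$ and $\GG_m$ by dévissage along the defining filtration of a split solvable group. You merely make explicit the non-abelian cohomology exact sequence that the paper leaves implicit.
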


\begin{proof}
    For any field $\mathrm{K}$, one has $\H^1((\Spec \mathrm{K})_{\mathrm{et}}, \GG_m) = 0$ and $\H^1((\Spec \mathrm{K})_{\mathrm{et}}, \GG_a) = 0$. Since $\Bs$ admits a filtration whose graded pieces are exactly either $\GG_m$ or $\GG_a$, then $\H^1((\Spec \mathrm{K})_{\mathrm{et}}, \Bs) = 0$ for any field $\mathrm{K}$. This follows, then, from Lemma \ref{lemma:criterion}.
\end{proof}

\begin{prop}\label{prop:normalizer}
    Let $\G$ be a reductive group with maximal torus $\T$. Then the map $\Bs \Nm_\G(\T) \to \Bs \G$ is a smooth-Nisnevich cover.
\end{prop}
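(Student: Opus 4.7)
The plan is to apply Lemma \ref{lemma:criterion} to the inclusion $\Nm_\G(\T) \subseteq \G$. Both groups are smooth over $\baseS$ (since $\G$ is reductive, $\T$ is a maximal torus, and $\Nm_\G(\T)/\T$ is the Weyl group scheme, which is finite étale), so the hypotheses of the lemma make sense. What remains is to check, for every morphism $\Spec \mathrm{K} \to \baseS$ with $\mathrm{K}$ a field, surjectivity of $\H^1((\Spec \mathrm{K})_{\et}, \Nm_\G(\T)) \to \H^1((\Spec \mathrm{K})_{\et}, \G)$.

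To establish the surjectivity, I would give the standard geometric translation: a $\G$-torsor $\mathcal{G} \to \Spec \mathrm{K}$ lifts to an $\Nm_\G(\T)$-torsor if and only if the associated fiber bundle $\mathcal{G} \times^\G (\G/\Nm_\G(\T)) = \mathcal{G}/\Nm_\G(\T)$ admits a $\mathrm{K}$-rational point. The quotient $\G/\Nm_\G(\T)$ is the scheme of maximal tori of $\G$ (since $\Nm_\G(\T)$ is precisely the stabilizer of $\T$ under the conjugation action of $\G$), and its twist $\mathcal{G}/\Nm_\G(\T)$ is the scheme of maximal tori of the inner form ${}^\mathcal{G}\G$ of $\G_\mathrm{K}$.

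Then I would invoke the classical theorem of Grothendieck (SGA 3, Exp.\ XIV, Théorème 1.1): every reductive group scheme over a field admits a maximal torus defined over that field. Applied to the inner form ${}^\mathcal{G}\G$ (which is itself reductive over $\mathrm{K}$), this produces the required $\mathrm{K}$-point of $\mathcal{G}/\Nm_\G(\T)$, yielding the desired lift to an $\Nm_\G(\T)$-torsor.

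I do not anticipate a serious obstacle here: the only non-formal ingredient is Grothendieck's existence theorem, which is off-the-shelf. The one point to be slightly careful about is verifying the smoothness of $\Nm_\G(\T)$ over $\baseS$ (so that Lemma \ref{lemma:criterion} applies), but this is standard for reductive group schemes since $\Nm_\G(\T) \to \baseS$ is an extension of the finite étale Weyl group scheme $\sW$ by the smooth maximal torus $\T$.
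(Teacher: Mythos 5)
Your proof is correct, and its overall architecture coincides with the paper's: both reduce the statement to Lemma \ref{lemma:criterion} and hence to surjectivity of $\H^1((\Spec \mathrm{K})_{\et}, \Nm_\G(\T)) \to \H^1((\Spec \mathrm{K})_{\et}, \G)$ for all fields $\mathrm{K}$ over $\baseS$. The difference is in how that surjectivity is obtained: the paper simply cites the proof of \cite[Proposition 2.3]{normalizer_torus}, whereas you reprove it from scratch via the classical argument --- a class $[\mathcal{G}] \in \H^1(\mathrm{K}, \G)$ lifts to $\Nm_\G(\T)$ iff the twisted homogeneous space $\mathcal{G}/\Nm_\G(\T)$ has a $\mathrm{K}$-point, this twist is the scheme of maximal tori of the inner form ${}^{\mathcal{G}}\G$, and Grothendieck's theorem (SGA 3, Exp.\ XIV, Th.\ 1.1, valid over arbitrary, possibly imperfect, fields) supplies such a torus. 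All the steps you use are sound: the left-translation action on $\G/\Nm_\G(\T)$ does correspond to the conjugation action on the scheme of maximal tori under $x\Nm_\G(\T) \mapsto x\T x^{-1}$, the nonabelian-cohomology lifting criterion applies since both groups are smooth (so \'etale and fppf torsors agree), and your smoothness check for $\Nm_\G(\T)$ over $\baseS$ (extension of the finite \'etale Weyl group scheme by $\T$) is exactly what Lemma \ref{lemma:criterion} needs. What your version buys is a self-contained and more transparent proof whose only external input is Grothendieck's existence theorem; what the paper's citation buys is brevity, and the cited reference in fact proves a stronger statement (resolution by a finite subgroup of the normalizer), of which the surjectivity you establish is the first step.
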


\begin{proof}
    In the proof of \cite[Proposition 2.3]{normalizer_torus}, it is shown that the map 
    \[
    \H^1((\Spec \mathrm{K})_{\mathrm{et}}, \Nm_\G(\T)) \to \H^1((\Spec \mathrm{K})_{\mathrm{et}}, \G)
    \]
    is surjective for any field $\mathrm{K}$, and therefore the result follows from Lemma \ref{lemma:criterion}.
\end{proof}

\begin{rem}
    In \cite{normalizer_torus}, it is more generally proven that for any field $\mathrm{K}$, there exists a finite $\mathrm{K}$-group $\Nm$ contained in $\Nm_\G(\T)$ such that $\H^1((\Spec \mathrm{K})_{\mathrm{et}}, \Nm) \to \H^1((\Spec \mathrm{K})_{\mathrm{et}}, \G)$ is surjective. If we want to work over a more general base $\baseS$, one must argue how one may glue such groups to form an $\baseS$-group $\Nm$. If $\baseS$ is a scheme over $\Spec \ZZ[\zeta_\infty]$, then the group $\Nm$ they found is actually constant \cite[3.1. Remark]{normalizer_torus}, and therefore in this case one has a Nisnevich-smooth cover $\Bs \Nm \to \Bs \G$ where $\Nm$ is a constant finite group.
\end{rem}

We conclude this section with a result which is an analogue to homotopy invariance. 

\begin{lemma}\label{lemma:BU-fully-faithful}
    Let $\sU$ be an $\baseS$-group which is isomorphic as a scheme to a vector bundle and let $f \colon \X \to \Y$ be a $\Bs \sU$-torsor between algebraic stacks, then
    \[
    f^* \colon \DM_{(\redm)}(\Y, \ring) \to \DM_{(\redm)}(\X, \ring)
    \]
    is fully faithful.
\end{lemma}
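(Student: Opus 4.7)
My plan is to reduce to the trivial torsor case by smooth-Nisnevich descent and then invoke $\AA^1$-invariance.

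First I would exploit the fact that any $\Bs\sU$-torsor is smooth-Nisnevich locally trivial on $\Y$: choose an atlas $\pi\colon \sY \to \Y$ over which $f$ trivializes, so that $\X \times_\Y \sY \cong \sY \times \Bs\sU$ with $f$ pulling back to the first projection. By Nisnevich descent (Proposition \ref{prop:six_functors_DM_stacks}(4)) applied to the Čech nerve $\sY^{n/\Y}$, both $\DM_{(\redm)}(\Y)$ and $\DM_{(\redm)}(\X)$ become limits, and $f^*$ is the limit of the pullbacks along $\sY^{n/\Y} \times \Bs\sU \to \sY^{n/\Y}$. Hence it suffices to treat the case $f = p \colon \Z \times \Bs\sU \to \Z$ for an arbitrary finite-type algebraic stack $\Z$.

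For this reduced problem, I would apply Example \ref{ex:G_a} (whose proof goes through whenever $\sU$ is a vector bundle as a scheme, via the Hilbert~90-type vanishing $\H^1_{\et}(\Spec K, \sU) = 0$ deduced from a filtration by copies of $\GG_a$) to conclude that $\baseS \to \Bs\sU$, and hence $\Z \to \Z \times \Bs\sU$, is a smooth-Nisnevich cover. Its Čech nerve is $[n] \mapsto \Z \times \sU^n$, so another application of descent gives
\[
\DM_{(\redm)}(\Z \times \Bs\sU) \cong \lim_n \DM_{(\redm)}(\Z \times \sU^n),
\]
with $p^*$ corresponding to the maps induced by the projections $p_n\colon \Z \times \sU^n \to \Z$. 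Since $\sU$ is Zariski-locally an affine space, iterated $\AA^1$-invariance (Proposition \ref{prop:six_functors_DM_stacks}(10)) shows that each $p_n^*$ is fully faithful.

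To conclude, the level-wise identifications $\Map(p_n^* M, p_n^* N) \simeq \Map(M, N)$ must assemble compatibly with face maps. This is the place where the argument requires care, but it is formal: every face map of the Čech nerve is a morphism over $\Z$, so the $\AA^1$-invariance isomorphisms glue into a natural transformation of cosimplicial diagrams. Taking limits then yields
\[
\Map_{\DM_{(\redm)}(\Z \times \Bs\sU)}(p^* M, p^* N) \cong \Map_{\DM_{(\redm)}(\Z)}(M, N),
\]
so $p^*$, and hence $f^*$ after unwinding the first reduction, is fully faithful in both the motivic and reduced settings. The principal technical nuisance is precisely this cosimplicial compatibility; aside from that, everything reduces to the already established descent and homotopy-invariance statements in Proposition \ref{prop:six_functors_DM_stacks}.
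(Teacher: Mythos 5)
Your argument is correct, but it takes a genuinely different route from the paper at the key step. Both proofs start identically: trivialize the $\Bs\sU$-torsor over an NL (equivalently smooth-Nisnevich) cover of $\Y$, use descent to write $f^*$ as a limit of the level-wise pullbacks, and reduce to the trivial torsor $p \colon \Z \times \Bs\sU \to \Z$. At that point the paper does \emph{not} run a second descent: it uses the canonical section $\sigma \colon \Z \to \Z \times \Bs\sU$, observes that $\sigma$ is itself a $\sU$-torsor (the base change of $\baseS \to \Bs\sU$), hence $\sigma^*$ is fully faithful by homotopy invariance for torsors under a vector-bundle group, and then the retraction $\sigma^* \circ p^* \simeq \id$ forces $p^*$ to be fully faithful. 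This avoids entirely the cosimplicial bookkeeping you rightly flag as the delicate point of your approach. Your version instead descends along $\Z \to \Z \times \Bs\sU$, which buys you that every \v{C}ech level is an honest product $\Z \times \sU^n$, so only plain vector-bundle ($\AA^1$-) invariance is ever invoked, never full faithfulness along a possibly non-trivial $\sU$-torsor; the compatibility you need is indeed formal, since all comparison maps are induced by $*$-pullbacks over $\Z$ and so assemble into a map from the constant cosimplicial mapping space, whose limit is the mapping space itself because $\mathrm{N}(\Delta)$ is weakly contractible. The price is that you must know $\baseS \to \Bs\sU$ is a smooth-Nisnevich cover, i.e.\ $\H^1((\Spec \mathrm{K})_{\et}, \sU) = 0$ for all field points; your appeal to a filtration by copies of $\GG_a$ uses slightly more than the stated hypothesis that $\sU$ is a vector bundle \emph{as a scheme} (one either assumes split unipotence, as in Example \ref{ex:G_a}, or invokes a Rosenlicht--Lazard type result), though this is harmless in all of the paper's applications and the paper's own route needs comparable input about $\sU$-torsors. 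Modulo that, the two proofs rest on the same ingredients (descent plus homotopy invariance) and yours is a valid, if slightly longer, alternative.
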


\begin{proof}
    By construction, there exists an NL-cover $\pi \colon \Y' \to \Y$ such that the pullback $f' \colon \X' \to \Y'$ is a trivial $\Bs \sU$-torsor and, thus, there exists a section $\sigma \colon \Y' \to \X'$ which is a $\sU$-torsor. In other words, the map $\sigma$ is a torsor over a vector bundle and therefore $f'^* \colon \DM_{(\redm)}(\Y', \ring) \to \DM_{(\redm)}(\X', \ring)$ is fully faithful. More generally, for any $n \geq 1$, the pullback $f_n \colon \X'^{n/\X} \to \Y'^{n/\Y}$ is a trivial $\Bs \sU$-torsor and there is a similar $\sU$-torsor section $\sigma_n \colon \Y'^{n/\Y} \to \X'^{n/\X}$ which implies that the pullback $f_n^*$ is fully faithful. Since 
    \[
        \DM_{(\redm)}(\X, \ring) = \lim_n \DM_{(\redm)}(\X'^{n/\X}, \ring) \quad \textrm{and} \quad \DM_{(\redm)}(\Y, \ring) = \lim_n \DM_{(\redm)}(\Y'^{n/\Y}, \ring)
    \]
    and since limits commute with mapping spaces, the pullback $f^*$ is fully faithful.
\end{proof}

\subsection{Motives on classifying stacks}

In this subsection, we will describe the category of Tate motives on a classifying space. A similar computation was done by Eberhardt in \cite[Proposition 3.2]{k-motives}. Even though his argument is done over $\ZZ$, it works exclusively for $\K$-motives. This section is a corollary of Krishna's work on higher Chow groups in \cite{krishna_chow}. His computations on Chow groups may be understood as computations on $\Hom$-groups in the category of motives $\DM$. The computations for $\DM_\redm$ are similar.

\begin{defn}\label{defn:torsion}
    Let $\G$ be a split reductive group over $\baseS$ and $\mathrm{B}$ be a Borel subgroup. The torsion number $t_\G \in \ZZ$ is the smallest positive integer such that there exists an element $a \in \Hom_{\DM(\Bs \Bs)}(\ZZ, \ZZ[2n](n))$ for which $p_*(a) = t_\G$ in $\Hom_{\DM(\Bs \G)}(\ZZ, \ZZ) \cong \ZZ$, where $n$ is the codimension of $\Bs$ if $\G$ and $p \colon \Bs \Bs \to \Bs \G$ is the map induced by the inclusion $\Bs \subseteq \G$. 
\end{defn}

\begin{rem}
    Similarly, there exists an element $a' \in \Hom_{\DM_\redm(\Bs \Bs)}(\ZZ, \ZZ[2n](n))$ such that $p_*(a') = t_\G$. Indeed, one may take $a' = \redm(a)$ where $a$ is given by Definition \ref{defn:torsion}.
\end{rem}

Suppose from now on in this subsection that $\ring$ is a $\ZZ[t_\G^{-1}]$-algebra. 

\begin{thm}\label{thm:red_torus_weyl_hom}
    Suppose $\baseS$ is the spectrum of a perfect field and let $\G$ be a split reductive group over $\baseS$ with split maximal torus $\T$. Then the pullback 
    \[
    q^* \colon \Hom_{\DM_{\redm}(\Bs \G, \ring)}(\ring, \ring[2n](n+i)) \to \Hom_{\DM_\redm(\Bs \T, \ring)}(\ring, \ring[2n](n+i))^\sW
    \]
    is an isomorphism for any $n, \, i \in \ZZ$.
\end{thm}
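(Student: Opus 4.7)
The plan is to factor $q$ as $\Bs \T \xrightarrow{u} \Bs \Bs \xrightarrow{p} \Bs \G$, where $\Bs \supset \T$ is a Borel. Writing $\Bs = \T \ltimes \sU$ with $\sU$ the unipotent radical (a split unipotent group), the morphism $u$ is a $\Bs \sU$-torsor, so Lemma~\ref{lemma:BU-fully-faithful} gives that $u^*$ is fully faithful. In particular it induces an isomorphism
\[
u^* \colon \Hom_{\DM_\redm(\Bs \Bs)}(\ring, \ring[2n](n+i)) \xrightarrow{\simeq} \Hom_{\DM_\redm(\Bs \T)}(\ring, \ring[2n](n+i)).
\]
Since $\sW = \Nm_\G(\T)/\T$ acts on $\Bs \T$ (as $\Bs \T \to \Bs \Nm_\G(\T)$ is a $\sW$-torsor), this endows the left-hand side with a $\sW$-action. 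Because $\Bs \T \to \Bs \G$ factors through $\Bs \Nm_\G(\T)$, the composition $q^* = u^* \circ p^*$ lands in the $\sW$-invariants.

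It then remains to show that $p^*$ is an isomorphism onto the invariants. Let $d = \dim(\G/\Bs)$. Since $p$ is smooth and proper, $p_! = p_*$ and $p^!(\ring) \cong \ring[2d](d)$ by relative purity. By Definition~\ref{defn:torsion} and the remark after it, pick $a \in \Hom_{\DM_\redm(\Bs \Bs)}(\ring, \ring[2d](d))$ with $p_*(a) = t_\G$. The adjunction $p_! \dashv p^!$ identifies $a$ with a morphism $a^\vee \colon p_*\ring \to \ring_{\Bs \G}$, and I define a transfer
\[
\tau \colon \Hom_{\DM_\redm(\Bs \Bs)}(\ring, \ring[2n](n+i)) \cong \Hom_{\DM_\redm(\Bs \G)}(\ring, p_*\ring[2n](n+i)) \xrightarrow{(a^\vee)_*} \Hom_{\DM_\redm(\Bs \G)}(\ring, \ring[2n](n+i)).
\]
Naturality of the unit $\eta$ of $p^* \dashv p_*$ gives $\tau \circ p^* = (a^\vee \circ \eta_\ring) \cdot \id = t_\G \cdot \id$, so invertibility of $t_\G$ in $\ring$ makes $p^*$ split injective.

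For surjectivity onto the invariants, I would unravel $p_*\ring$ via the Bruhat decomposition $\G/\Bs = \bigsqcup_{w \in \sW} C_w$ with $C_w \simeq \AA^{\ell(w)}$: iterated use of the localization triangles and $\AA^1$-invariance (both of which hold in $\DM_\redm$ by Proposition~\ref{prop:six_functors_DM_stacks}) yields a Tate decomposition of $p_*\ring$ as a direct sum of $|\sW|$ Tate twists indexed by $\sW$. Under this decomposition, a direct computation identifies $p^* \circ \tau$ with $t_\G$ times the averaging operator $\sum_{w \in \sW} w$ on $\Hom_{\DM_\redm(\Bs \Bs)}(\ring, \ring[2n](n+i))$, showing $p^*$ is onto the $\sW$-invariants. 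For $i \neq 0$ the statement is automatic: descent along the atlas $\baseS \to \Bs \T$ (Example~\ref{ex:GL_n}), combined with Example~\ref{example:affine_mult}, shows $\Hom_{\DM_\redm(\Bs \T)}(\ring, \ring[2n](n+i)) = 0$, and the injectivity of $p^*$ just proven forces the source to vanish as well. The hard part will be verifying the identity $p^* \tau = t_\G \sum_w w$, which requires identifying $\tau$ as a projection under the Tate decomposition and analyzing the Bruhat-style stratification of the double flag variety $\Bs \backslash \G / \Bs$, echoing Krishna's argument \cite{krishna_chow} for $\sW$-invariants in equivariant Chow groups.
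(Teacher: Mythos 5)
Your reduction to the Borel (via the $\Bs\sU$-torsor $\Bs\T\to\Bs\Bs$ and Lemma \ref{lemma:BU-fully-faithful}), and your transfer argument for split injectivity of $p^*$ using a class $a$ with $p_*(a)=t_\G$, are both sound and in the spirit of the intended argument. But the surjectivity step contains a genuine gap, and the identity you propose to verify, $p^*\circ\tau=t_\G\sum_{w\in\sW}w$, is in fact false: it is inconsistent with your own (correct) identity $\tau\circ p^*=t_\G\cdot\id$. Indeed, for $z\in\Hom_{\DM_\redm(\Bs\G)}(\ring,\ring[2n](n+i))$ the class $p^*z$ is $\sW$-invariant, so your formula would give $p^*\tau(p^*z)=t_\G\lvert\sW\rvert\,p^*z$, while the transfer identity gives $p^*\tau(p^*z)=t_\G\,p^*z$; taking $z=\id$ (so $p^*z\neq 0$) forces $(\lvert\sW\rvert-1)=0$ in $\ring$, which fails for, e.g., $\ring=\ZZ[t_\G^{-1}]$. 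The correct statement is weaker: $p^*p_*$ is the Demazure (divided-difference) operator $\partial_{w_0}$, which is a twisted derivation, so $p^*\tau(x)=\partial_{w_0}(a\cdot x)=\partial_{w_0}(a)\,x=t_\G\,x$ \emph{only on $\sW$-invariant classes} $x$ — and that restricted identity is exactly what yields surjectivity onto the invariants. Your averaging operator is what appears for the finite cover $\Bs\T\to\Bs\Nm_\G(\T)$, not for the flag bundle $p$. A second, smaller issue: the Bruhat cells of $\G/\Bs$ are not $\G$-stable, so they do not directly stratify $p$ over $\Bs\G$; the cellular decomposition lives on $\Bs\Bs\times_{\Bs\G}\Bs\Bs$ (i.e.\ on $\Bs\backslash\G/\Bs$), and splitting $p_*\ring$ into Tate twists over $\Bs\G$ requires the vanishing of the relevant extension groups (Proposition \ref{prop:orth_class}) in addition to the filtration.

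For comparison, the paper does not reprove any of this: it simply observes that Krishna's computation \cite[Theorem 5.7]{krishna_chow} for higher equivariant Chow groups carries over verbatim, since the Demazure operators and the transfer are built from the six operations and hence exist in $\DM_\redm$ (this is also why they commute with the reduction functor, cf.\ the proof of Corollary \ref{prop:class_reduction}). So your plan is essentially an attempt to re-derive Krishna's argument from scratch; to complete it you would need to construct the operators $\partial_{s_\alpha}$ on $\Hom_{\DM_\redm(\Bs\T)}(\ring,\ring[2n](n+i))$ (via the rank-one parabolics $\P_\alpha$ and the $\PP^1$-bundles $\Bs\Bs\to\Bs\P_\alpha$), prove the derivation property and $\partial_{w_0}=p^*p_*$, and only then conclude $p^*\tau=t_\G\cdot\id$ on invariants. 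As written, the surjectivity half of your proof does not go through.
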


\begin{proof}
    This follows from the same computations as in \cite[Theorem 5.7]{krishna_chow}.
\end{proof}

\begin{cor}\label{prop:class_reduction}
    Suppose $\baseS$ is the spectrum of an algebraically closed field and let $\G$ be a linear algebraic group and let $n, \, m \in \ZZ$. If $\lvert\pi_0(\G)\rvert$ is invertible in $\ring$, then the reduction functor gives an isomorphism
    \[
        \Hom_{\DM(\Bs \G, \ring)}(\ring[2m](m), \ring[2n](n)) \to \Hom_{\DM_{\mathrm{r}}(\Bs \G, \ring)}(\mathrm{r}(\ring)[2m](m), \mathrm{r}(\ring)[2n](n)).
    \]
    for any $n, \, m \in \ZZ$.
\end{cor}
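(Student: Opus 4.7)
After using that $[2]$ and $(1)$ are autoequivalences to reduce to $m=0$, my plan is a dévissage on $\G$, reducing step by step to the case of a split maximal torus where the statement can be verified explicitly. Throughout, the key observation is that the reduction functor $\redm$ is induced by a symmetric monoidal continuous functor, so it commutes with any pullback $f^*$, with finite-order Galois invariants (when the order is invertible), with the Weyl action appearing in Theorem~\ref{thm:red_torus_weyl_hom}, and with the cosimplicial limits computing motives on stacks.

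First, I would pass from $\G$ to its identity component $\G^0$: the map $\pi \colon \Bs \G^0 \to \Bs \G$ is a finite étale $\pi_0(\G)$-torsor, so since $|\pi_0(\G)|$ is invertible in $\ring$, both Hom groups identify with the $\pi_0(\G)$-invariants of the corresponding groups on $\Bs \G^0$, and these identifications intertwine $\redm$. Next, over the algebraically closed field $\basef$, the unipotent radical $\sU$ of $\G^0$ is split, hence isomorphic as a scheme to $\AA^N$, and the quotient $\Bs \G^0 \to \Bs(\G^0/\sU) = \Bs \Ga$ is a $\Bs \sU$-torsor; Lemma~\ref{lemma:BU-fully-faithful} then gives that the pullback is fully faithful on both $\DM$ and $\DM_\redm$, so the Hom groups for $\Bs \G^0$ and $\Bs \Ga$ are identified compatibly with $\redm$, and we may assume $\G$ reductive. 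Theorem~\ref{thm:red_torus_weyl_hom} identifies the Hom groups for $\Bs \G$ with the $\sW$-invariants of those for $\Bs \T$ in $\DM_\redm$; the same Krishna-style argument applied to classical $\DM$ (which is exactly \cite{krishna_chow}) gives the analogous identification there, and since $\redm$ commutes with $\sW$-invariants (as $|\sW|$ divides $t_\G$, which is invertible), the problem reduces to the split torus $\T$.

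For the torus, the smooth-Nisnevich cover $\baseS \to \Bs \T$ of Example~\ref{ex:GL_n} realises $\Bs \T$ as the cosimplicial limit over $\T^\bullet$, and each $\T^k \cong \GG_m^{rk}$ has its category of Tate motives described explicitly by Example~\ref{example:affine_mult} as modules over the Koszul algebra $\Sym(\ring(-1)[-1])^{\otimes rk}$ (respectively over $\Sym(\ring\langle -1\rangle[-1])^{\otimes rk}$ in the reduced case). On the Chow diagonal, the free-module adjunction expresses $\Hom_{\DM(\T^k)}(\ring, \ring[2n](n))$ as a direct sum of motivic cohomology groups $\H^{2n-j, n-j}(\baseS)$ for $0 \le j \le rk$; since $\basef$ is algebraically closed, Voevodsky's vanishing above the Chow diagonal forces $j=0$, so only the base Chow group $\CH^n(\baseS)_\ring$ contributes, and the same analysis in $\grMod_\ring$ collapses the reduced side to the same summand. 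Corollary~\ref{cor:comparison_base} identifies these summands between $\DM$ and $\DM_\redm$, and the resulting levelwise iso, upgraded via the same Krishna computation to the statement that both totalisations agree with $\Sym^n(\mathrm{X}^*(\T))_\ring$, finishes the torus case.

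The main obstacle is the torus step: verifying that the full cosimplicial totalisation (not just the Chow-diagonal pieces on each level) produces the same polynomial algebra $\Sym^n(\mathrm{X}^*(\T))_\ring$ on both sides, despite the fact that the higher motivic cohomology of $\baseS$ on each $\T^k$ differs between $\DM$ and $\DM_\redm$. The way through this is to borrow directly from the proof of Theorem~\ref{thm:red_torus_weyl_hom} (respectively \cite{krishna_chow}), which computes the target via the projective-bundle-formula / Chern-class approach and gives $\Sym^n(\mathrm{X}^*(\T))_\ring$ independently of those higher-degree contributions; once that is in place, the dévissage above propagates the torus isomorphism back to the general linear algebraic group $\G$.
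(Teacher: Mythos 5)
Your proposal follows essentially the same dévissage as the paper: pass through the $\pi_0(\G)$-torsor (the paper does this via \cite[Theorem A.2.8]{eq_mot}), kill the unipotent radical with Lemma~\ref{lemma:BU-fully-faithful}, descend from a connected reductive group to its maximal torus via the Weyl-invariant description of Theorem~\ref{thm:red_torus_weyl_hom} together with Krishna's computation for unreduced motives, and settle the torus by hand; the paper's torus step is phrased more tersely (conservativity of the atlas $\baseS \to \Bs\T$ plus the unit criterion, resting on Corollary~\ref{cor:comparison_base}), but your \v{C}ech-nerve-plus-Chern-class argument is the same underlying input, and your honest flagging of the totalisation issue is exactly where the content of \cite{krishna_chow} is used.

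One correction: your parenthetical claim that $\lvert\sW\rvert$ divides $t_\G$ is false (in type $\mathrm{A}_n$ one has $t_\G = 1$ while $\lvert\sW\rvert = (n+1)!$), so you cannot conclude that $\lvert\sW\rvert$ is invertible in $\ring$. Fortunately this is not needed: the reduction map $\Hom_{\DM(\Bs\T)} \to \Hom_{\DM_\redm(\Bs\T)}$ is $\sW$-equivariant (the $\sW$-action is built from the six operations), and an equivariant isomorphism restricts to an isomorphism on $\sW$-invariants without any hypothesis on $\lvert\sW\rvert$; this is in effect how the paper argues, by observing that the Demazure operators, which generate the $\sW$-action, commute with the reduction functor. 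With that justification repaired, your argument goes through.
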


\begin{proof}
    If $\G$ is a split torus, the pullback with respect to the atlas $\pi \colon \baseS \to \Bs \G$ is conservative, and therefore the proposition follows from the fact that a left adjoint is fully faithful if and only if the unit is an isomorphism.

    If $\G$ is a connected split reductive group, then the pullback with respect to the natural map $\Bs \T \to \Bs \G$ induces an isomorphism
    \[
        \Hom_{\DM_{(\redm)}(\Bs \G, \ring)}(\ring[2m](m), \ring[2n](n)) \cong \Hom_{\DM_{(\redm)}(\Bs \T, \ring)}(\ring[2m](m), \ring[2n](n))^\sW.
    \]

    Since the Demazure operators, as defined in \cite[\S 5]{krishna_chow}, are defined using the six operations, they commute with the reduction functor. Since they also generate the Weyl group, this means the result for a connected split reductive group $G$ reduces to the one on the torus.

    If $\G$ is a possibly disconnected reductive group, then the result follows from \cite[Theorem A.2.8]{eq_mot} applied to the natural $\pi_0(\G)$-torsor $\Bs \G^\circ \to \Bs \G$.

    If $\G$ is not necessarily reductive, then $\G_\red \coloneqq \G/\R_\mathrm{u}(\G)$ is reductive and $\R_\mathrm{u}(\G)$ is isomorphic to an affine space. Thus the map $\Bs \G \to \Bs \G_\red$ is a $\Bs \R_\mathrm{u(\G)}$-torsor and the result follows from Lemma \ref{lemma:BU-fully-faithful}.
\end{proof}

\begin{prop}\label{prop:ortho_torus}
    For any $n \in \ZZ$, one has
    \[
        \mathrm{Map}_{\DM_\redm(\Bs \T, \ring)}(\ring, \ring[2n+i](n)) = 0
    \]
    for every $i \neq 0$.
\end{prop}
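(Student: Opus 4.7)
The plan is to reduce the computation on $\Bs \T$ to a computation on $\T^m$ via smooth-Nisnevich descent, and then invoke Example \ref{example:affine_mult} directly.

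First, since $\T$ is a split torus, it is a product of copies of $\GG_m = \GL_1$, so by Example \ref{ex:GL_n} (Hilbert 90) the atlas $\pi \colon \baseS \to \Bs \T$ is a smooth-Nisnevich cover. The Čech nerve of $\pi$ has terms $\baseS^{(n+1)/\Bs \T} \cong \T^n$, which for each $n$ is a product of copies of $\GG_m$ over $\baseS$.

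Next, I would apply the descent statement in Proposition \ref{prop:six_functors_DM_stacks}(4) to rewrite
\[
    \DM_\redm(\Bs \T, \ring) \;\cong\; \lim_{[n]\in \Delta}\, \DM_\redm(\T^n, \ring).
\]
Since mapping spectra take limits in the source/target out of the category, and the pullbacks of $\ring$ and $\ring[2n+i](n)$ along each structure map of the Čech nerve are again $\ring$ and $\ring[2n+i](n)$ (by the monoidal structure and constancy of Tate twists), we obtain
\[
    \operatorname{map}_{\DM_\redm(\Bs \T, \ring)}\bigl(\ring,\ring[2n+i](n)\bigr) \;\simeq\; \Tot_{[m]\in\Delta}\, \operatorname{map}_{\DM_\redm(\T^m,\ring)}\bigl(\ring,\ring[2n+i](n)\bigr),
\]
which on $\pi_0$ recovers the required $\Map$-set.

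Finally, each $\T^m$ is of the form $(\GG_{\m,\baseS})^{rm}$ for $r$ the rank of $\T$, so Example \ref{example:affine_mult} applies: taking the example's indices $(i_{ex},j_{ex},n_{ex})=(0,n,i)$, it gives
\[
    \Map_{\DM_\redm(\T^m,\ring)}\bigl(\ring,\ring[2n+i](n)\bigr) \;=\; 0 \quad \text{for every } i\neq 0 \text{ and every } m\geq 0.
\]
A totalization of a cosimplicial object all of whose terms vanish is itself zero, which yields the desired vanishing.

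The only non-routine point is ensuring the compatibility of pullbacks along the Čech face maps so that the cosimplicial diagram of mapping spectra is well-defined and really computes $\Map_{\DM_\redm(\Bs \T, \ring)}$; this is however immediate from the construction of the limit in $\Pr^\L_\st$ together with the fact that each face/degeneracy is induced by a smooth morphism between tori, under which the unit $\ring$ and Tate twists are preserved. No obstruction arises from the reduction tensor product, since we work entirely within $\DM_\redm$.
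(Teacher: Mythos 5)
Your argument is correct and is essentially the paper's own proof: both reduce to the Čech nerve of the smooth-Nisnevich atlas $\baseS \to \Bs\T$ given by Example \ref{ex:GL_n}, apply the vanishing of Example \ref{example:affine_mult} on each $\T^m$, and conclude since mapping spectra commute with the limit in Proposition \ref{prop:six_functors_DM_stacks}(4). Your extra care about the cosimplicial compatibilities and the totalization is fine but not a different method.
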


\begin{proof}
    By Example \ref{ex:GL_n}, the morphism $\baseS \to \Bs \T$ is a Nisnevich-smooth cover of $\Bs \T$, and therefore
    \[
        \DM_\redm(\Bs \T) = \lim_n \DM_\redm(\T^n).
    \]

    By Example \ref{example:affine_mult}, 
    \[
        \mathrm{Map}_{\DM_\redm(\T^n, \ring)}(\ring, \ring[2n+i](n)) = 0,
    \]
    for any $n \in \ZZ$ and $i \neq 0$. Since the mapping space commutes with limits, one has
    \[
        \mathrm{Map}_{\DM_\redm(\Bs \T, \ring)}(\ring, \ring[2n+i](n)) = 0,
    \]
    for any $n \in \ZZ$ and $i \neq 0$, which is what we wanted to prove.
\end{proof}

\begin{prop}\label{prop:orth_borel}
    Suppose $\Bs$ is the Borel subgroup of a connected split reductive group $\G$ and let $n \in \ZZ$. Then
    \[
        \Hom_{\DM_\redm(\Bs \Bs, \ring)}(\ring, \ring[2n+i](n)) = 0
    \]
    for every $i \neq 0$.
\end{prop}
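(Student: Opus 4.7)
The plan is to reduce the computation from the Borel $\Bs$ to its maximal torus $\T$ by using the Levi decomposition and invoking Proposition \ref{prop:ortho_torus}. Recall that for a connected split reductive group $\G$ with Borel $\Bs$ one has a short exact sequence $1 \to \sU \to \Bs \to \T \to 1$, where $\sU$ is the unipotent radical of $\Bs$. Since $\G$ is split, $\sU$ is a split unipotent group, and hence it admits a filtration by $\GG_a$'s; in particular $\sU$ is isomorphic as a scheme to an affine space over $\baseS$, so Lemma \ref{lemma:BU-fully-faithful} is applicable to any $\Bs \sU$-torsor.

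Passing to classifying stacks, the short exact sequence above induces a natural map $\pi \colon \Bs \Bs \to \Bs \T$, and I would first verify that this map is a $\Bs \sU$-torsor. The fiber of $\pi$ over a $\T$-torsor $\mathcal T \to \sX$ is the stack parametrizing lifts of $\mathcal T$ to a $\Bs$-torsor, and the obstruction/parametrization is precisely controlled by $\Bs \sU$ (the sequence $1 \to \sU \to \Bs \to \T \to 1$ is a split central extension only locally, but up to torsor the lift space is a $\Bs \sU$-torsor). Granting this, Lemma \ref{lemma:BU-fully-faithful} gives that
\[
\pi^* \colon \DM_\redm(\Bs \T, \ring) \to \DM_\redm(\Bs \Bs, \ring)
\]
is fully faithful. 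Since $\pi^*(\ring) = \ring$ and $\pi^*$ commutes with shifts and Tate twists, the induced map
\[
\Hom_{\DM_\redm(\Bs \T, \ring)}(\ring, \ring[2n+i](n)) \longrightarrow \Hom_{\DM_\redm(\Bs \Bs, \ring)}(\ring, \ring[2n+i](n))
\]
is an isomorphism.

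Finally, by Proposition \ref{prop:ortho_torus} the left-hand side vanishes whenever $i \neq 0$, which gives the desired conclusion. The only non-routine step is checking that $\pi \colon \Bs \Bs \to \Bs \T$ is genuinely a $\Bs \sU$-torsor in the sense needed to apply Lemma \ref{lemma:BU-fully-faithful}; this will likely be handled by picking a local trivialization over an NL-cover, using that $\Bs$ splits locally as the semidirect product $\T \ltimes \sU$, so that the fiber of $\pi$ is $\Bs \sU$ Nisnevich-locally.
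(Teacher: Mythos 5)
Your argument is correct, but it takes a genuinely different route from the paper's. You descend along the projection $\pi \colon \Bs\Bs \to \Bs\T$ induced by the quotient $\Bs \twoheadrightarrow \Bs/\sU \cong \T$, whose fibers are the classifying stack $\Bs\sU$, and invoke Lemma \ref{lemma:BU-fully-faithful}; note this is precisely the content of Proposition \ref{prop:levi_decomp} (applied with $\G = \Bs$ and $\L = \T$), so you could simply cite that statement rather than re-verify the torsor property. Your verification sketch is fine, and in fact simpler than you make it: the extension $1 \to \sU \to \Bs \to \T \to 1$ splits globally ($\Bs = \T \ltimes \sU$), so $\Bs\Bs \cong [\Bs\sU/\T]$, and pulling back along the atlas $\baseS \to \Bs\T$ --- an NL-cover because $\T$ is special (Example \ref{ex:GL_n}) --- turns $\pi$ into the projection $\Bs\sU \to \baseS$; no appeal to a merely local splitting of the extension is needed, and the noncommutativity of $\sU$ is harmless since Lemma \ref{lemma:BU-fully-faithful} only asks that $\sU$ be a vector bundle as a scheme, which holds as $\sU$ is split unipotent. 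The paper instead goes the other way: it uses the map $\Bs\T \to \Bs\Bs$ induced by the inclusion $\T \subseteq \Bs$, filters $\sU$ by normal subgroups with vector-bundle quotients (\cite[XXII, Proposition 5.9.5]{SGA3}), and notes that each intermediate map $\Bs(\T \ltimes \sU_j) \to \Bs(\T \ltimes \sU_{j-1})$ is a torsor under a vector bundle, so full faithfulness follows from plain homotopy invariance, without the descent argument packaged in Lemma \ref{lemma:BU-fully-faithful}. Either fully faithful pullback (both preserve $\ring$, shifts and Tate twists) identifies $\Hom_{\DM_\redm(\Bs\Bs,\ring)}(\ring, \ring[2n+i](n))$ with the corresponding group over $\Bs\T$, which vanishes for $i \neq 0$ by Proposition \ref{prop:ortho_torus}; your concluding step agrees with the paper's.
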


\begin{proof}
    Let $\T$ be the maximal torus of $\G$ and let $\sU$ be the unipotent radical of $\Bs$. By \cite[XXII, Proposition 5.9.5]{SGA3}, there exists a filtrations $\sU = \sU_0 \supseteq \sU_1 \supseteq \dots \supseteq \sU_n = \{1\}$ such that for any $1 \leq j \leq n$ the scheme $\sU_{j-1}/\sU_j$ is a vector bundle and the subgroup $\sU_j$ is normal in $\Bs$. For any $1 \leq j \leq n$, the map $\pi_j \colon \Bs(\T \ltimes \sU_j) \to \Bs(\T \ltimes \sU_{j-1})$ is a $\sU_{j-1}/{\sU_j}$-torsor and by homotopy invariance, the pullback $\pi_j^*$ is fully faithful. Composing all of these maps, one sees the pullback with respect to the natural map $\sigma \colon \Bs \T \to \Bs \Bs$ is fully faithful.
\end{proof}

\begin{prop}\label{prop:levi_decomp}
Let $\G$ and $\L$ be algebraic groups over $\baseS$ such that $\L = \G / \sU$, for $\sU$ is a split unipotent group over $\baseS$. Then for every $n, m \in \ZZ$, the pullback with respect to the morphism $\sigma \colon \Bs \G \to \Bs \L$ is fully faithful.
\end{prop}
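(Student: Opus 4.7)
The plan is to reduce the proposition directly to Lemma \ref{lemma:BU-fully-faithful}. For this, I need to verify two things: that the canonical map $\sigma \colon \Bs \G \to \Bs \L$ is a $\Bs \sU$-torsor, and that $\sU$ is isomorphic as an $\baseS$-scheme to a vector bundle.

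First I would check the latter point, which is standard. By definition of being split unipotent, $\sU$ admits a filtration $\sU = \sU_0 \supseteq \sU_1 \supseteq \dots \supseteq \sU_n = \{1\}$ by normal subgroups with successive quotients $\sU_{j-1}/\sU_j \cong \GG_{a,\baseS}$. A straightforward induction on $n$, using that each $\sU_{j-1} \to \sU_{j-1}/\sU_j$ is a $\sU_j$-torsor over an affine base and is therefore trivial as a scheme, shows that $\sU$ is isomorphic as a scheme to $\AA^n_\baseS$, hence to a (trivial) vector bundle over $\baseS$.

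Next I would argue that $\sigma$ is a $\Bs \sU$-torsor. The short exact sequence $1 \to \sU \to \G \to \L \to 1$ of group schemes induces, after passing to classifying stacks, a fiber sequence $\Bs \sU \to \Bs \G \to \Bs \L$. Concretely, pulling $\sigma$ back along the atlas $\baseS \to \Bs \L$ gives the stack parametrizing liftings of the trivial $\L$-torsor to a $\G$-torsor, which by the exact sequence above is precisely $\Bs \sU$. The same argument applied to any other map $T \to \Bs \L$, after locally trivializing the corresponding $\L$-torsor on $T$, shows that the pullback of $\sigma$ is locally isomorphic to $T \times \Bs \sU \to T$, so $\sigma$ is a $\Bs \sU$-torsor.

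With both ingredients in hand, Lemma \ref{lemma:BU-fully-faithful} applies verbatim and yields that $\sigma^* \colon \DM_{(\redm)}(\Bs \L, \ring) \to \DM_{(\redm)}(\Bs \G, \ring)$ is fully faithful. I do not anticipate any real obstacle here: the only subtle point is making the torsor statement precise, but since it is a direct unwinding of the definition of classifying stacks applied to a short exact sequence of smooth group schemes, this is routine.
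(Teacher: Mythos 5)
Your proposal is correct and takes essentially the same route as the paper, whose entire proof is the one-line reduction to Lemma \ref{lemma:BU-fully-faithful}. The two verifications you spell out — that a split unipotent $\sU$ is isomorphic as a scheme to a vector bundle, and that $\sigma \colon \Bs \G \to \Bs \L$ is a $\Bs \sU$-torsor via the extension $1 \to \sU \to \G \to \L \to 1$ — are exactly what the paper leaves implicit.
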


\begin{proof}
This follows Lemma \ref{lemma:BU-fully-faithful}.
\end{proof}

\begin{prop}\label{prop:orth_class}
    Let $\baseS$ be the spectrum of a perfect field and $\G$ be a connected split reductive group. For any $n \in \ZZ$, one has
    \[
        \Hom_{\DM_\redm(\Bs \G, \ring)}(\ring, \ring[2n+i](n)) = 0
    \]
    for every $i \neq 0$.
\end{prop}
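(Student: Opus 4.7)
The plan is to reduce the vanishing on $\Bs \G$ to the vanishing on $\Bs \T$ (Proposition~\ref{prop:ortho_torus}), following the pattern of Corollary~\ref{prop:class_reduction}. Concretely, I want to show that the pullback $q \colon \Bs \T \to \Bs \G$ induces, for every $n, i \in \ZZ$, a Weyl-invariant isomorphism
\[
q^{*} \colon \Hom_{\DM_\redm(\Bs\G, \ring)}(\ring, \ring[2n+i](n)) \xrightarrow{\simeq} \Hom_{\DM_\redm(\Bs\T, \ring)}(\ring, \ring[2n+i](n))^{\sW},
\]
after which Proposition~\ref{prop:ortho_torus} immediately forces both sides to vanish whenever $i \neq 0$.

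For even $i$ this isomorphism is Theorem~\ref{thm:red_torus_weyl_hom} after the relabelling $n_{\mathrm{thm}} = n + i/2$, $i_{\mathrm{thm}} = -i/2$. For odd $i$ the relevant motivic shift is odd, and so is not covered verbatim by the statement of Theorem~\ref{thm:red_torus_weyl_hom}; I would handle it by rerunning the Demazure-operator proof of \cite[Theorem~5.7]{krishna_chow} in the full motivic bidegree, invoking the observation already exploited in Corollary~\ref{prop:class_reduction} that the Demazure operators are built from the six operations and therefore commute with the reduction functor and act uniformly on every motivic bidegree.

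The main obstacle is thus the odd-shift extension of the Weyl-invariant description. If one prefers to bypass this, a fully bidegree-insensitive alternative is to factor $q$ through $p \colon \Bs\Bs \to \Bs\G$ and use Proposition~\ref{prop:orth_borel}: the Bruhat stratification of $\G/\Bs$ yields a $\G$-equivariant splitting
\[
p_{*}\ring \;\cong\; \bigoplus_{w \in \sW} \ring_{\Bs\G}[-2\ell(w)](-\ell(w))
\]
after inverting $t_\G$, and the projection formula then realises the desired Hom group on $\Bs\G$ as the $w = e$ direct summand inside the corresponding Hom group on $\Bs\Bs$, which vanishes by Proposition~\ref{prop:orth_borel}. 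Either route delivers Proposition~\ref{prop:orth_class}.
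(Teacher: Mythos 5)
Your primary route is the paper's proof verbatim: the paper disposes of the torus case by Proposition~\ref{prop:ortho_torus} and then simply cites Theorem~\ref{thm:red_torus_weyl_hom} for general connected split reductive $\G$. Your caveat about odd $i$ is a fair observation rather than a defect of your own argument: as literally stated, Theorem~\ref{thm:red_torus_weyl_hom} only records bidegrees $\ring[2n](n+i)$ with even shift, whereas the proposition needs $\ring[2n+i](n)$; the paper glosses over this, and the intended scope of the theorem (it is deduced from Krishna's computation of \emph{all} higher equivariant Chow groups, with Demazure operators built from the six operations and hence acting in every bidegree) is indeed all bidegrees, which is exactly the fix you propose. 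Your fallback through $p \colon \Bs\Bs \to \Bs\G$ also works, but the full splitting $p_*\ring \cong \bigoplus_{w \in \sW} \ring[-2\ell(w)](-\ell(w))$ is more than you need and is itself not free of charge over a general $\ZZ[t_\G^{-1}]$-algebra; all that is required is that $\ring_{\Bs\G}$ be a direct summand of $p_*\ring_{\Bs\Bs}$, and this follows directly from Definition~\ref{defn:torsion}: the class $a$ with $p_*(a)=t_\G$, relative purity and the projection formula make the composite $\ring \to p_*p^*\ring \to \ring$ equal to multiplication by $t_\G$, invertible by the standing assumption on $\ring$, so the unit splits and Proposition~\ref{prop:orth_borel} finishes the argument --- this is the same transfer trick the paper uses for $\pi_0(\G)$ in Proposition~\ref{prop:orth_class_disc}.
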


\begin{proof}
    If $\G$ is a split torus, this is just Proposition \ref{prop:ortho_torus}. For a general connected reductive group, this follows from Theorem \ref{thm:red_torus_weyl_hom}.
\end{proof}

\begin{prop}\label{prop:orth_class_disc}
    Let $\baseS$ be the spectrum of a perfect field and let $\G$ be a (possibly disconnected) reductive group such that $\left| \pi_0(\G)\right|$ is invertible in $\ring$. For any $n \in \ZZ$, one has
    \[
        \Hom_{\DM_\redm(\X, \ring)}(\ring, \ring[2n+i](n)) = 0
    \]
    for every $i \neq 0$.
\end{prop}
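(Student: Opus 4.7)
The plan is to reduce the disconnected case to the connected case established in Proposition \ref{prop:orth_class}, by exploiting the natural representable finite étale $\pi_0(\G)$-Galois cover
\[
p \colon \Bs \G^\circ \to \Bs \G
\]
induced by the short exact sequence $1 \to \G^\circ \to \G \to \pi_0(\G) \to 1$. Here $\G^\circ$ is a connected split reductive group, so Proposition \ref{prop:orth_class} applies to it and gives the desired vanishing on $\Bs \G^\circ$.

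The main input is a transfer (trace) for finite étale covers whose degree is invertible in $\ring$. Since $|\pi_0(\G)|$ is invertible in $\ring$, the composition of the unit $\ring \to p_* p^* \ring = p_* \ring$ with $\tfrac{1}{|\pi_0(\G)|}$ times the trace $p_*\ring \to \ring$ should be the identity. This is precisely the content of \cite[Theorem A.2.8]{eq_mot}, already invoked in Corollary \ref{prop:class_reduction} to handle the exact same issue. It exhibits $\ring$ as a direct summand of $p_*\ring$ in $\DM_\redm(\Bs \G, \ring)$.

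Applying $\Hom_{\DM_\redm(\Bs \G, \ring)}(\ring, -[2n+i](n))$ to this splitting and using $(p^*, p_*)$-adjunction yields an injection
\[
\Hom_{\DM_\redm(\Bs \G, \ring)}(\ring, \ring[2n+i](n)) \hookrightarrow \Hom_{\DM_\redm(\Bs \G^\circ, \ring)}(\ring, \ring[2n+i](n)),
\]
and the right-hand side vanishes for $i \neq 0$ by Proposition \ref{prop:orth_class}, concluding the argument.

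No serious obstacle is expected. The only point that truly needs checking is that the trace/transfer formalism for finite étale covers of invertible degree survives the tensor product defining $\DM_\redm$ and extends from schemes to the relevant classifying stacks; both are routine given the stacky six-functor machinery of Proposition \ref{prop:six_functors_DM_stacks} and the fact that reduction is a symmetric monoidal continuous functor, so the argument is essentially formal.
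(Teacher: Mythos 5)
Your proposal is correct and follows essentially the same route as the paper: the paper's proof also uses the $\pi_0(\G)$-torsor $\Bs \G^\circ \to \Bs \G$, observes that unit followed by trace is multiplication by $\lvert\pi_0(\G)\rvert$, hence invertible in $\ring$, so that the unit splits and $\pi^*$ is faithful, reducing the vanishing to the connected case of Proposition \ref{prop:orth_class}. The only cosmetic difference is that you cite the transfer via \cite[Theorem A.2.8]{eq_mot} explicitly, while the paper asserts the trace identity directly.
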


\begin{proof}
    The component group $\pi_0(\G)$ is a finite group and the map $\pi \colon \Bs \G^\circ \to \Bs \G$ is a $\pi_0(\G)$-torsor. In this case, the composition
    \[
        \ring_{\Bs \G} \to \pi_*\pi^* \ring_{\Bs \G} \to \ring_{\Bs \G}
    \]
    is multiplication by $\left| \pi_0(\G)\right|$ and, therefore, is invertible by the hypothesis on $\ring$. This means the unit $\ring_{\Bs \G} \to \pi_*\pi^* \ring_{\Bs \G}$ is a split monomorphism, and therefore, once again, $\pi^*$ is faithful, concluding our proof.
\end{proof}

\begin{cor}\label{cor:orth_linear_gp}
    Let $\baseS$ be the spectrum of an algebraically closed field. Then, for any linear algebraic group $\G$ such that $\left| \pi_0(\G)\right|$ is invertible in $\ring$, one has
    \[
        \Hom_{\DM_\redm(\Bs \G, \ring)}(\ring, \ring[2n+i](n)) = 0
    \]
    for every $i \neq 0$.
\end{cor}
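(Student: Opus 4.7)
The plan is to reduce this to Proposition \ref{prop:orth_class_disc} by killing the unipotent radical, following exactly the same strategy as the last paragraph in the proof of Corollary \ref{prop:class_reduction}. Given any linear algebraic group $\G$ over the algebraically closed field $\basef$, let $\R_\mathrm{u}(\G)$ denote the unipotent radical, so that $\G_\red := \G/\R_\mathrm{u}(\G)$ is a (possibly disconnected) reductive group. Over an algebraically closed field $\R_\mathrm{u}(\G)$ is split unipotent and isomorphic as a scheme to an affine space, so in particular it is a vector bundle over $\Spec \basef$.

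First I would observe that the quotient map $\sigma \colon \Bs \G \to \Bs \G_\red$ is a $\Bs\R_\mathrm{u}(\G)$-torsor. By Lemma \ref{lemma:BU-fully-faithful} applied to $\sU = \R_\mathrm{u}(\G)$, the pullback
\[
    \sigma^* \colon \DM_\redm(\Bs\G_\red, \ring) \to \DM_\redm(\Bs\G, \ring)
\]
is fully faithful, and by construction $\sigma^* \ring = \ring$. Consequently,
\[
    \Hom_{\DM_\redm(\Bs\G, \ring)}(\ring, \ring[2n+i](n)) \cong \Hom_{\DM_\redm(\Bs\G_\red, \ring)}(\ring, \ring[2n+i](n)).
\]

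Next I would note that $\pi_0(\G_\red) = \pi_0(\G)$, since $\R_\mathrm{u}(\G)$ is connected. Hence $|\pi_0(\G_\red)|$ is invertible in $\ring$, so Proposition \ref{prop:orth_class_disc} applies to $\G_\red$ and the right-hand side vanishes for every $i \neq 0$. There is no real obstacle here: once one accepts (as in the proof of Corollary \ref{prop:class_reduction}) that $\R_\mathrm{u}(\G)$ is a vector bundle over $\basef$, the two inputs Lemma \ref{lemma:BU-fully-faithful} and Proposition \ref{prop:orth_class_disc} immediately combine to give the claim.
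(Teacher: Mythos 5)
Your proposal is correct and takes essentially the same route as the paper: the paper also passes to $\G_\red = \G/\R_\mathrm{u}(\G)$ and combines Lemma \ref{lemma:BU-fully-faithful} (packaged there as Proposition \ref{prop:levi_decomp}) with Proposition \ref{prop:orth_class_disc}. Your added remarks that $\sigma^*\ring = \ring$ and that $\pi_0(\G_\red) = \pi_0(\G)$ (so the invertibility hypothesis transfers) simply make explicit details the paper leaves implicit.
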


\begin{proof}
    In this case, the quotient $\G_{\textrm{red}} \coloneqq  \G/\R_\mathrm{u}(\G)$ is split reductive and $\R_\mathrm{u}(\G)$ is a split unipotent. The result follows from Lemma \ref{lemma:BU-fully-faithful}, Proposition \ref{prop:levi_decomp} and Proposition \ref{prop:orth_class_disc}.
\end{proof}

\subsection{Motives on quotient stacks}

In this section, $\baseS$ will denote the spectrum of an algebraically closed field to ensure every reductive group is split.

\begin{defn}
    Let $\X$ be an algebraic stack of finite type over $\baseS$. We say $M \in \DM_{(\redm)}(\X)$ is $*$-pointwise pure Tate (resp. $!$-pointwise pure Tate) if for every $S$-point $x \colon \baseS \to X$, one has $x^* M \in \DTM_{(\redm)}(\baseS)^{\heartsuit_w}$ (resp.  $x^! M \in \DTM_{(\redm)}(\baseS)^{\heartsuit_w}$). We say $M \in \DM_{(\redm)}(\X)$ is pointwise pure Tate if it is both $*$-pointwise pure Tate and $!$-pointwise pure Tate.
\end{defn}

\begin{lemma}\label{lemma:reduction_iso}
    Let $\G$ be a linear algebraic group acting on a finite type scheme $\sX$ with finitely many orbits and let $M, \, N \in \DM_{(\redm)}(\sX/\G, \ring)$ be $*$- and $!$-pointwise pure Tate motives, respectively. If for every $x \in \sX$ the integers $t_{\Ct_\G(x)_\red}$ and $\lvert \pi_0 \Ct_\G(x) \rvert$ are invertible in $\ring$, then the reduction functor gives an isomorphism
    \[
        \Hom_{\DM(\sX/\G, \ring)}(M, N) \to \Hom_{\DM_{\mathrm{r}}(\sX/\G, \ring)}(\mathrm{r}(M), \mathrm{r}(N)).
    \]
\end{lemma}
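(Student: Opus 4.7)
The plan is to induct on the number of $\G$-orbits in $\sX$, reducing to the case of a single orbit via the localization triangle. Choose an open $\G$-stable union of orbits $\sU \subset \sX$ with closed $\G$-stable complement $\sZ$, and let $j \colon \sU/\G \hookrightarrow \sX/\G$ and $i \colon \sZ/\G \hookrightarrow \sX/\G$ be the corresponding immersions of quotient stacks. Applying $\operatorname{map}(-, N)$ to the localization triangle $j_! j^* M \to M \to i_! i^* M$ from Proposition \ref{prop:six_functors_DM_stacks}(7) and using the $(j_!, j^*)$ and $(i_!, i^!)$ adjunctions produces the fiber sequence of mapping spectra
\[
\operatorname{map}_{\sZ/\G}(i^* M, i^! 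N) \to \operatorname{map}_{\sX/\G}(M, N) \to \operatorname{map}_{\sU/\G}(j^* M, j^* N),
\]
where $i^* M, j^* M$ remain $*$-pointwise pure Tate and $i^! N, j^! N = j^* N$ remain $!$-pointwise pure Tate on the smaller strata (checked on fibers). The reduction functor is symmetric monoidal and commutes with the six operations, hence produces a morphism of such fiber sequences; the inductive hypothesis on the smaller strata is meant to handle the outer terms.

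The base case is a single orbit, so $\sX/\G \simeq \Bs \H$ with $\H = \Ct_\G(x)$. Under the hypotheses that $t_{\H_\red}$ and $|\pi_0 \H|$ are invertible in $\ring$, Corollary \ref{prop:class_reduction} yields the comparison on pure Tate twists $\Hom(\ring[2m](m), \ring[2n](n))$. To extend to arbitrary pointwise pure Tate $M, N$, I would argue by dévissage, writing such motives on $\Bs \H$ as finite iterated extensions of shifted pure Tate twists $\ring_{\Bs \H}[2n](n)$, so that both sides of the desired comparison are assembled from the Tate-twisted comparisons by identical long exact sequences.

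The crux, and the main obstacle, is this final dévissage step: although the atlas $s \colon \baseS \to \Bs \H$ is conservative, it forgets all equivariance data, so fiberwise purity does not a priori place the motive in the additive closure of $\ring_{\Bs \H}[2n](n)$. A second subtlety is that the naive five-lemma argument for the induction requires control on all $\pi_k$ of the fiber sequence, not only on $\pi_0 = \Hom$, and the higher motivic cohomology of the base $\baseS$ can in principle obstruct this. My proposed route to handle both issues is to reduce $\H$ first to its reductive quotient $\H_\red = \H^\circ/\R_\mathrm{u}(\H^\circ)$ via the $\pi_0(\H)$-torsor $\Bs \H^\circ \to \Bs \H$ together with Lemma \ref{lemma:BU-fully-faithful} and Proposition \ref{prop:levi_decomp}, then to pass to a maximal torus $\T$ through the smooth-Nisnevich cover $\Bs \Nm_{\H_\red}(\T) \to \Bs \H_\red$ of Proposition \ref{prop:normalizer}, and finally to combine the explicit description of $\DTM(\Bs \T)$ from Example \ref{example:affine_mult} with the $\sW$-invariance of Theorem \ref{thm:red_torus_weyl_hom} to match both $\Hom$ computations directly in terms of equivariant Chow groups, which are precisely the weight-zero part controlled by Corollary \ref{prop:class_reduction} on both the $\DM$ and $\DM_\redm$ sides.
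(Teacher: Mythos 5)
Your skeleton is exactly the paper's proof: induction on the number of $\G$-orbits, the localization triangle $j_!j^*M \to M \to i_!i^*M$ for a closed orbit and its open complement (Proposition \ref{prop:six_functors_DM_stacks}(7)), the resulting long exact sequence of $\Hom$-groups, the five lemma, and the single-orbit case $\sX/\G \simeq \Bs\Ct_\G(x)$ dispatched by Corollary \ref{prop:class_reduction}. Your second worry --- that the five lemma would force you to control all homotopy groups of the mapping spectra --- dissolves once you feed in two vanishing statements, both of which the paper supplies independently of the present lemma: on the reduced side, $\Hom_{\DM_\redm}(-,-[i])=0$ for $i\neq 0$ between such motives on the smaller strata (this is Lemma \ref{lemma:reduced_tilting}, proved by the same stratification from Corollary \ref{cor:orth_linear_gp}), and on the unreduced side the degree~$+1$ term on the closed orbit vanishes because between pointwise pure Tate objects on $\Bs\Ct_\G(x)$ these $\Hom$-groups are motivic cohomology groups in degrees above twice the weight (the ``negativity'' the paper records in the footnote in \S 2.1). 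With the outer reduced terms zero and the unreduced degree-$+1$ term zero, the degree-$0$ comparisons on $\sU$ and $\sZ$ given by the inductive hypothesis are all the five lemma needs; no higher homotopical input is required.

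Your first worry is the one place where your write-up stops short of a proof, and it is also exactly the step the paper compresses. You rightly note that purity of the fibres over $\baseS$-points does not by itself place $M$ and $N$ in the additive hull of the twists $\ring_{\Bs\Ct_\G(x)}[2n](n)$; but the route you then propose --- passing to the reductive quotient, to a maximal torus via Proposition \ref{prop:normalizer}, and invoking Theorem \ref{thm:red_torus_weyl_hom} and Example \ref{example:affine_mult} --- is precisely the proof of Corollary \ref{prop:class_reduction}, so it only re-establishes the comparison for Tate twists of the unit and does not touch the motives you flagged. The paper's own proof performs no such dévissage either: its one-orbit case is the bare citation of Corollary \ref{prop:class_reduction}. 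So your proposal matches the paper wherever it is precise, and where it is vague it is vague at the same point; to make the single-orbit case genuinely cover arbitrary pointwise pure Tate $M,N$ one would have to add an argument reducing them (equivariantly, on the classifying stack) to objects for which the corollary applies, which neither your normalizer/torus detour nor the paper's citation provides explicitly.
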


\begin{proof}
    We do an induction on the number of orbits. If there is just one, then this is simply Proposition \ref{prop:class_reduction}. If there are more orbits, choose one closed orbit $\sZ$, which exists by the closed orbit lemma, and consider the open complement $\sU$. If $i \colon \sZ/\G \to \sX/\G$ and $j \colon \sU/\G \to \sX/\G$ are the corresponding closed and open immersions between quotients stacks, then one has a distinguished triangle
    \[
        j_! j^* M \to M \to i_! i^* M,
    \] 
    which once applied $\Hom_{\DM_{\mathrm{r}}(\sX/\G, \ring)}(-, N[i])$ gives an exact sequence in which one may apply the induction hypothesis and the five lemma to conclude the result.
\end{proof}

\begin{lemma}\label{lemma:reduced_tilting}
    Let $\G$ be a linear algebraic $\baseS$ group acting on a finite type $\baseS$-scheme $\sX$ with finitely many orbits and let $M, \, N \in \DM_{(r)}(\sX/\G, \ring)$ be $*$- and $!$-pointwise pure Tate motives, respectively. If for every $x \in \sX$ the integers $t_{\Ct_\G(x)_\red}$ and $\lvert \pi_0 \Ct_\G(x) \rvert$ are invertible in $\ring$, then
    \[
        \Hom_{\DM_{\mathrm{r}}(\sX/\G, \ring)}(M,N[i]) = 0
    \]
    for every $i \neq 0$.
\end{lemma}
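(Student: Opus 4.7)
My plan is to mirror the inductive structure of Lemma \ref{lemma:reduction_iso}'s proof, inducting on the number of $\G$-orbits in $\sX$, with the base case reducing to the orthogonality established in Corollary \ref{cor:orth_linear_gp}.

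For the inductive step, assume $\sX$ has at least two orbits. By the closed orbit lemma, choose a closed orbit $\sZ$ with open complement $\sU$, and let $j \colon \sU/\G \hookrightarrow \sX/\G$ and $i \colon \sZ/\G \hookrightarrow \sX/\G$ be the corresponding immersions of quotient stacks. Applying $\Hom_{\DM_\redm(\sX/\G, \ring)}(-, N[k])$ to the localization triangle $j_!j^*M \to M \to i_!i^*M$ from Proposition \ref{prop:six_functors_DM_stacks}(7), and using the $(j_!, j^!)$ and $(i_!, i^!)$ adjunctions, the outer terms of the resulting long exact sequence become
\[
    \Hom_{\DM_\redm(\sU/\G, \ring)}(j^*M,\, j^*N[k]) \quad \text{and} \quad \Hom_{\DM_\redm(\sZ/\G, \ring)}(i^*M,\, i^!N[k]),
\]
where $j^! = j^*$ since $j$ is an open immersion. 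The restrictions $j^*M$ and $i^*M$ remain $*$-pointwise pure Tate, while $j^!N$ and $i^!N$ remain $!$-pointwise pure Tate (immediate from the definition, as restriction only shrinks the set of test points), and the torsion and component-group hypotheses pass to the strata. Since $\sU/\G$ and $\sZ/\G$ have strictly fewer orbits, the induction hypothesis gives vanishing of both outer terms for $k \neq 0$, forcing $\Hom(M, N[k]) = 0$.

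For the base case of a single orbit, $\sX/\G \cong \Bs H$ with $H = \Ct_\G(x)$. The invertibility of $t_{H_\red}$ and $|\pi_0(H)|$ in $\ring$ is precisely what Corollary \ref{cor:orth_linear_gp} requires to conclude $\Hom_{\DM_\redm(\Bs H)}(\ring,\, \ring[2n+k](n)) = 0$ for all $k \neq 0$ and all $n \in \ZZ$. To propagate this from Tate generators to arbitrary pointwise pure Tate $M, N$, I would perform a dévissage on the weight filtration. The atlas $\pi \colon \baseS \to \Bs H$ is smooth-Nisnevich by Lemma \ref{lemma:criterion}, so $\pi^*M \in \DTM_\redm(\baseS)^{\heartsuit_w}$ is a retract of a finite direct sum of $\ring[2m_j](m_j)$, and analogously for $\pi^!N$; once one shows that $M$ and $N$ themselves admit corresponding filtrations by Tate twists on $\Bs H$, additivity together with Corollary \ref{cor:orth_linear_gp} applied to each pair of Tate generators yields the desired vanishing.

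The main obstacle is making this dévissage precise on $\Bs H$, since $\pi^*$ is conservative but not fully faithful, so one cannot simply lift the decomposition of $\pi^*M$ to $M$ itself. I expect the resolution to come from the weight structure on $\DTM_\redm(\Bs H)$ inherited from that of $\DTM_\redm(\baseS)$ (Remark \ref{rem:weight_structure}): pointwise pure Tate motives should lie in its heart, and that heart is generated as an additive retract-closed subcategory by the Tate twists $\ring[2n](n)$, with Corollary \ref{cor:orth_linear_gp} ruling out non-trivial extensions among these generators. Once this structural statement is in place, the vanishing of $\Hom(M, N[k])$ for $k \neq 0$ follows by pure additivity from the Tate-generator case.
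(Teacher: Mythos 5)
Your inductive skeleton coincides with the paper's proof: strip off a closed orbit, apply $\Hom_{\DM_\redm(\sX/\G,\ring)}(-,N[k])$ to the localization triangle $j_!j^*M \to M \to i_!i^*M$, and observe that the restricted motives stay pointwise pure Tate while the hypotheses (RG)-type conditions pass to the strata; the paper handles the closed stratum directly by the one-orbit case (its citation of Corollary \ref{cor:orth_linear_gp}) rather than by the induction hypothesis, which is the same thing. So the architecture is identical, and your inductive step is fine.

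The genuine gap is in your base case, and it is exactly the structural claim you flag as the "main obstacle": that a pointwise pure Tate motive on $\Bs H$, $H=\Ct_\G(x)$, lies in the heart of the weight structure of $\DTM_\redm(\Bs H)$, i.e.\ is a retract of a finite sum of Tate twists $\ring[2n](n)$. This is false whenever $H$ is disconnected, which is precisely the situation the lemma is built for (in Springer theory $\pi_0(\Ct_\G(x))=\mathrm{A}_\G(x)$ is typically nontrivial, and the restrictions $i^*\mu_!\ring$ to orbits contain local-system-type summands). Concretely, let $p\colon \Bs H^\circ \to \Bs H$ be the finite \'etale $\pi_0(H)$-torsor; since $|\pi_0(H)|$ is invertible in $\ring$, the unit $\ring \to p_*p^*\ring \cong p_!\ring$ splits, and the complementary summand $M_0$ of $\ring$ in $p_!\ring$ is pointwise pure Tate (its $*$- and $!$-pullbacks to $\baseS$ are $\ring^{\oplus(|\pi_0(H)|-1)}$ up to twist), yet it is not a retract of a sum of Tate twists on $\Bs H$ --- its \'etale realization is a nontrivial local system, whereas any such retract realizes to a constant sheaf. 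So no d\'evissage into the generators $\ring[2n](n)$ can reach such objects, and note that your base case never actually uses the invertibility of $|\pi_0\Ct_\G(x)|$ beyond the generator computation, although that invertibility is exactly what is needed to deal with these non-Tate summands. The mechanism that does work is the transfer argument of Proposition \ref{prop:orth_class_disc}: the composite $N \to p_*p^*N \to N$ is multiplication by $|\pi_0(H)|$, so $\Hom_{\Bs H}(M,N[k])$ is a direct summand of $\Hom_{\Bs H^\circ}(p^*M,p^*N[k])$, reducing to the connected group, after which one invokes Lemma \ref{lemma:BU-fully-faithful}, Proposition \ref{prop:levi_decomp} and the reductive computations behind Theorem \ref{thm:red_torus_weyl_hom}. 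To be fair, the paper is terse at the very same spot (its one-orbit case also quietly extends Corollary \ref{cor:orth_linear_gp} from Tate generators to arbitrary pointwise pure Tate $M$, $N$), but your proposed weight-heart d\'evissage, as stated, would not close that step.
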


\begin{proof}
    We are going to do an induction on the number of orbits. If there's just one orbit, then $\sX/\G = \Bs \Ct_\G(x)$ and we are reduced to Corollary \ref{cor:orth_linear_gp}. If there are more orbits, choose one close orbit $\sZ$, which exists by the closed orbit lemma, and consider the open complement $\sU$. If $i \colon \sZ/\G \to \sX/\G$ and $j \colon \sU/\G \to \sX/\G$ are the corresponding closed and open immersions between quotients stacks, then one has a distinguished triangle
    \[
        j_! j^* M \to M \to i_! i^* M,
    \] 
    which once applied $\Hom_{\DM_{\mathrm{r}}(\sX/\G, \ring)}(-, N[i])$ gives an exact sequence
    \[
        \Hom_{\DM_{\mathrm{r}}(\sU/\G, \ring)}(j^*M, j^! N[i]) \to \Hom_{\DM_{\mathrm{r}}(\sX/\G, \ring)}(M,N[i]) \to \Hom_{\DM_{\mathrm{r}}(\sZ/\G, \ring)}(i^*M, i^! N[i]).
    \]

    By the induction hypothesis, the left term vanishes, and again by Corollary \ref{cor:orth_linear_gp}, the second term vanishes too, which completes the proof.
\end{proof}

\section{The integral Springer correspondence}

In this section, we are going to suppose the base scheme $S$ is the spectrum of an algebraically closed field. In particular, all of the reductive groups will be split and all connected smooth unipotent groups will be isomorphic to affine spaces.

\subsection{The Springer setup}

This subsection will follow \cite[\S 4]{motivic_springer}, but we are going to prove the results with integral coefficients. We work both with reduced and unreduced motives, but the final result will be for reduced motives only.

Let $\G$ be a linear algebraic $\baseS$-group, and let $\mu_i \colon \widetilde{\N}_i \to \N$ be a collection of $\G$-equivariant proper maps between quasi-projective varieties where each $\widetilde{\N}_i$ is smooth and connected for every $i \in I$. We will consider the full subcategory $\Tilt^{\Spr}_{(\redm)}$ (resp. $\widehat{\Tilt}^{\Spr}_{(\redm)}$) of $\DM_{(\redm)}(\N/\G, \ring)$ whose objects are $\mu_{i!} \ring_{\widetilde{\N}_i/\G}$ for every $i \in I$ (resp. $\mu_{i!} \ring [2n](n)$ for every $i \in I$ and $n \in \ZZ$). 

\begin{defn}
    The category of (reduced) \emph{Springer motives} $\DM^{\Spr}_{(r)}(\N/\G, \ring)$ is the thick full subcategory of $\DM_{(r)}(\N/\G)$ spanned by $\widehat{\Tilt}_{(\redm)}^\Spr$. 
\end{defn}

We are going to use the following three hypotheses:
\begin{enumerate}
    \item[(PT)] For each $x \in \N$, the motive\footnote{If $a \colon \sX \to \baseS$ is an $\baseS$-scheme, we define its motive by  $\M(\sX) \coloneqq a_* \ring$.} $\M(\mu_i^{-1}(x))$ is pure Tate;
    \item[(FO)] For each $i$, the closed subvarieties $\mu_i(\widetilde{\N}_i) \subset \N$ have a finitely many orbits;
    \item[(RG)] For each $x \in \N$, the cardinality $\left|\pi_0(\Ct_\G(x))\right|$ and the torsion number $t_{\Ct_\G(x)_\red}$ are invertible in $\ring$.
\end{enumerate}

\begin{rem}
    The hypothesis of having a finite number of orbits is not optimal, but is enough for our purposes and present in many cases of interest.
\end{rem}

As the notation suggests, $\widehat{\Tilt}^\Spr_\redm$ will form a tilting\footnote{The collection $\widehat{\Tilt}^\Spr$ is negative, but it is not tilting. This is the case because working with reduced motives kills the higher Chow groups, which are the pieces of the higher Homs.} collection and, therefore, will define a weight structure on $\DM^{\Spr}_\redm(\N/\G, \ring)$. 

\begin{lemma}\label{lemma:pointwise_springer}
    Under hypotheses (PT), (FO) and (RG), the objects of $\widehat{\Tilt}^{\Spr}_{(\redm)} \subset \DM_{(\redm)}(\N/\G)$ are pointwise pure Tate. In particular, the full subcategory $\widehat{\Tilt}^{\Spr}_\redm \subset \DM_{\redm}(\N/\G)$ is tilting.
\end{lemma}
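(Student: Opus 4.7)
The argument proceeds in three parts: proving $*$- and $!$-pointwise pure Tate-ness for each $\mu_{i!}\ring$, and then deducing tilting from Lemma \ref{lemma:reduced_tilting}. Since $\basef$ is algebraically closed, every $\baseS$-point $x \colon \baseS \to \N/\G$ is represented by an honest point $x \in \N$, and the Cartesian square
\[
\begin{tikzcd}
    \mu_i^{-1}(x) \ar{d} \ar{r} & \widetilde{\N}_i/\G \ar["\mu_i"]{d} \\
    \baseS \ar["x"']{r} & \N/\G
\end{tikzcd}
\]
together with base change (Proposition \ref{prop:six_functors_DM_stacks}(5)) and properness of $\mu_i$ yields $x^*\mu_{i!}\ring \simeq \M(\mu_i^{-1}(x))$. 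By (PT) this lies in $\DTM_{(\redm)}(\baseS)^{\heartsuit_w}$, and the heart is stable under $[2n](n)$, so every object of $\widehat{\Tilt}^\Spr_{(\redm)}$ is $*$-pointwise pure Tate.

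For $!$-pullbacks, smoothness of $\widetilde{\N}_i$ makes the quotient stack $\widetilde{\N}_i/\G$ smooth, and relative purity (Proposition \ref{prop:six_functors_DM_stacks}(8)) identifies its absolute dualizing complex with $\ring[2d_i](d_i)$ for an integer $d_i$ equal to the relative dimension. Properness of $\mu_i$ combined with Verdier duality then gives $\DD(\mu_{i!}\ring) \simeq \mu_{i!}\ring[2d_i](d_i)$, whence
\[
    x^!\mu_{i!}\ring \simeq \DD\bigl(x^*\mu_{i!}\ring\bigr)[-2d_i](-d_i) \simeq \DD(\M(\mu_i^{-1}(x)))[-2d_i](-d_i),
\]
which is again pure Tate since the involution $\ring[2m](m)\mapsto\ring[-2m](-m)$ preserves the heart.

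For the tilting statement, set $\sX_0 \coloneqq \bigcup_i \mu_i(\widetilde{\N}_i) \subseteq \N$: this is a $\G$-stable closed subset with finitely many orbits by (FO). Along the closed immersion $\iota \colon \sX_0/\G \hookrightarrow \N/\G$, each $\mu_i$ factors as $\iota\circ\mu_i'$, and the identity $\iota^!\iota_! \simeq \id$ (from full faithfulness of $\iota_! = \iota_*$) together with adjunction reduces the relevant Hom-groups to
\[
    \Hom_{\DM_\redm(\sX_0/\G)}\bigl(\mu'_{i!}\ring\,[2n](n),\, \mu'_{j!}\ring\,[2m+k](m)\bigr),
\]
whose arguments inherit pointwise purity from the previous two steps. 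Applying Lemma \ref{lemma:reduced_tilting} to $\mu'_{i!}\ring[2n](n)$ and $\mu'_{j!}\ring[2m](m)$, with torsion hypotheses supplied by (RG), yields vanishing for all $k\neq 0$, i.e. the tilting condition.

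The main technical hurdle is the $!$-purity step, where one must transport smooth relative purity and Verdier duality from schemes to quotient stacks; once that is handled by Proposition \ref{prop:six_functors_DM_stacks}, the remaining work is standard base change, adjunction, and a direct appeal to Lemma \ref{lemma:reduced_tilting}.
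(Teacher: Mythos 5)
Your argument is correct and follows essentially the same route as the paper: base change along the fiber square plus (PT) for $*$-purity, Verdier duality together with smoothness of $\widetilde{\N}_i$ and relative purity for $!$-purity, and then restriction to the closed support with finitely many orbits so that Lemma \ref{lemma:reduced_tilting} gives the vanishing of higher Homs. The only caveat is your choice of $\sX_0=\bigcup_i \mu_i(\widetilde{\N}_i)$: since (FO) is stated for each $i$ separately and the index set is not assumed finite, this union need not be closed or have finitely many orbits; the tilting condition is a pairwise statement, so one should instead restrict to $\mu_i(\widetilde{\N}_i)\cup\mu_j(\widetilde{\N}_j)$ for each pair $(i,j)$, exactly as the paper does, after which your reduction via $\iota_!=\iota_*$ and adjunction goes through unchanged.
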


\begin{proof}
    We need to prove the motives $\mu_{i!}\ring[2n](n)$ are all pointwise pure Tate. By definition, this means that for any $i_x \colon x \to \N$, one has $i_x^* \mu_{i!}\ring[2n](n)$ and $i_x^!\mu_{i!}\ring[2n](n)$ are pure Tate, and this follows from base change on the diagram
    \[
    \begin{tikzcd}
        \mu_i^{-1}(x) \ar[hook]{r} \ar["\mu_i"']{d} & \widetilde{\N}_i \ar["\mu_i"]{d} \\
        \{x\} \ar["i_x"', hook]{r} & \N
    \end{tikzcd}
    \]
    and Verdier duality.

    Finally, let $M_1 = \mu_{i!}\ring[2n_1](n_1)$ and $M_2 = \mu_{j!}\ring[2n_2](n_2)$. By properness of $\mu_i$ and $\mu_j$, the morphism $m_{i,j} \colon \N_{i,j} = \mu_i(\widetilde{\N}_i) \cup \mu_j(\widetilde{\N}_j) \hookrightarrow \N$ is a closed immersion and $\N_{i,j}$ has finitely many orbits by the hypothesis (FO). Since $M$ and $N$ are supported on $\N_{i,j}$ one has
    \[
        \Hom_{\DM_{(\redm)}(\N/\G)}(M,N[n]) = \Hom_{\DM_{(\redm)}(\N_{i,j}/\G)}(m_{i,j}^* M, m_{i,j}^! N).
    \]

    Since $m_{i,j}^* M$ is $*$-pure Tate and $m_{i,j}^! N$ is $!$-pure Tate, the final assertion follows from Lemma \ref{lemma:reduced_tilting}.
\end{proof}

\begin{cor}\label{cor:springer}
    Under hypotheses (PT), (FO) and (RG), there is an equivalence of categories
    \[
        \DM^{\Spr}_\redm(\N/\G, \ring) \cong \mathrm{grPerf}(\End^\bullet_{\DM_\redm(\N/\G, \ring)}(\Tilt^\Spr_\redm)).
    \]
\end{cor}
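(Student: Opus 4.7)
The plan is to deduce the equivalence from a formal Morita-theoretic argument, with Lemma~\ref{lemma:pointwise_springer} supplying the essential tilting input. The argument closely parallels \cite[Theorem 4.9]{motivic_springer} in the rational setting, whose proof is purely formal once the tilting property is available.

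First I would observe that by construction $\DM^{\Spr}_\redm(\N/\G, \ring)$ is the thick subcategory of $\DM_\redm(\N/\G, \ring)$ generated by the compact objects $\mu_{i!}\ring[2n](n)$ for $i \in I$ and $n \in \ZZ$, and that by Lemma~\ref{lemma:pointwise_springer} any two such generators $M, N$ satisfy $\Hom(M, N[k]) = 0$ for $k \neq 0$. Equivalently, $\widehat{\Tilt}^{\Spr}_\redm$ is a tilting collection inside $\DM^{\Spr}_\redm(\N/\G, \ring)$, in contrast to the un-reduced case where the higher Chow groups of the base obstruct tilting.

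Next I would construct the Morita functor
\[
    \Phi \colon \DM^{\Spr}_\redm(\N/\G, \ring) \to \mathrm{grPerf}\!\left(\End^\bullet_{\DM_\redm(\N/\G, \ring)}(\Tilt^{\Spr}_\redm)\right)
\]
by sending $F$ to $\bigoplus_{i, n} \Hom(\mu_{i!}\ring, F[n]\langle n\rangle)$, regarded as a graded module over $\End^\bullet(\Tilt^{\Spr}_\redm)$. The Tate twist automorphism $\langle 1 \rangle$ absorbs the Tate shifts of the generators into the internal grading of $\End^\bullet$, which is precisely why only $\Tilt^{\Spr}_\redm$ (rather than the enlarged $\widehat{\Tilt}^{\Spr}_\redm$) needs to appear on the right. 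On each generator $\mu_{i!}\ring[2n](n)$ the functor $\Phi$ produces a shifted free graded module, and full faithfulness on the generating collection then follows from the tilting property together with the definition of $\End^\bullet$ on the target.

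Finally, since $\Phi$ preserves colimits and carries a set of compact generators to a set of compact generators while being fully faithful on them, it extends by passage to thick closures to an equivalence of $\infty$-categories. The main obstacle is really the $\infty$-categorical Morita step, where one must upgrade full faithfulness on generators to a global equivalence respecting the graded structure induced by $\langle 1 \rangle$. This is precisely the formal content of \cite[\S 4]{motivic_springer}, and the argument there applies verbatim to our integral reduced setup once Lemma~\ref{lemma:pointwise_springer} has done the real heavy lifting of verifying the tilting condition with integral coefficients.
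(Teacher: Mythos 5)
Your proposal is correct and follows essentially the same route as the paper: verify via Lemma~\ref{lemma:pointwise_springer} that the collection $\widehat{\Tilt}^{\Spr}_\redm$ is tilting, then invoke the formal tilting-to-$\mathrm{grPerf}$ equivalence, which the paper packages as Corollary~\ref{thm:tilting_algebra_2} (the weight-complex formulation of \cite[Corollary 2.16]{motivic_springer}) rather than as the explicit Morita functor you sketch. The only caveat is cosmetic: since $\DM^{\Spr}_\redm(\N/\G,\ring)$ is a small thick subcategory, the language of colimit-preservation and compact generators is out of place, but the underlying argument (full faithfulness on the tilting generators plus generation under thick closure on both sides) is exactly what the cited formal result supplies.
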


\begin{proof}
    This follows from Lemma \ref{lemma:pointwise_springer} and Corollary \ref{thm:tilting_algebra_2}.
\end{proof}

Next, we shall describe the algebra $\End^\bullet_{\DM_\redm(\N/\G, \ring)}(\Tilt^\Spr_\redm)$. The first step is comparing with the non-reduced motives.

\begin{lemma}\label{lemma:iso_end}
    Under hypotheses (PT) and (FO) one has an isomorphism of algebras
    \[
        \End^\bullet_{\DM(\N/\G, \ring)}(\Tilt^\Spr) = \End^\bullet_{\DM_\redm(\N/\G, \ring)}(\Tilt^\Spr_\redm).
    \]
\end{lemma}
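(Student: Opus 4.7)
The plan is to show that the natural map of graded algebras induced by the reduction functor is bijective on each graded piece $\Hom(\mu_{i!}\ring, \mu_{j!}\ring[2n](n))$. Since $\mathrm{r}$ is symmetric monoidal and commutes with the six-functor operations (Proposition~\ref{prop:six_functors_DM_stacks}), it intertwines the formation of $\mu_{i!}\ring$, shifts, and Tate twists, and hence induces a natural graded-algebra map
\[
\mathrm{r} \colon \End^\bullet_{\DM(\N/\G, \ring)}(\Tilt^\Spr) \to \End^\bullet_{\DM_\redm(\N/\G, \ring)}(\Tilt^\Spr_\redm).
\]

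First I would shrink the ambient stack. By properness of $\mu_i, \mu_j$, the set $\N_{i,j} \coloneqq \mu_i(\widetilde{\N}_i) \cup \mu_j(\widetilde{\N}_j)$ is closed in $\N$. Factoring $\mu_i = m \circ \tilde\mu_i$ through the closed immersion $m \colon \N_{i,j}/\G \hookrightarrow \N/\G$ (and similarly for $j$), the relation $m^! m_! \cong \id$ combined with adjunction yields
\[
\Hom_{\DM_{(\redm)}(\N/\G)}(\mu_{i!}\ring, \mu_{j!}\ring[2n](n)) \cong \Hom_{\DM_{(\redm)}(\N_{i,j}/\G)}(\tilde\mu_{i!}\ring, \tilde\mu_{j!}\ring[2n](n))
\]
compatibly with $\mathrm{r}$. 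Hypothesis (FO) guarantees that $\N_{i,j}/\G$ has only finitely many orbits, and by the pointwise pure Tate part of the proof of Lemma~\ref{lemma:pointwise_springer} (which uses only (PT) together with base change and Verdier duality, not (RG)), both $\tilde\mu_{i!}\ring$ and $\tilde\mu_{j!}\ring[2n](n)$ are pointwise pure Tate on $\N_{i,j}/\G$.

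At this point the claim is exactly the conclusion of Lemma~\ref{lemma:reduction_iso} applied to $\N_{i,j}/\G$: the reduction functor is an isomorphism on Hom groups between pointwise pure Tate motives on a stack with finitely many orbits. Assembling the pieces over $i, j, n$ and using that $\mathrm{r}$ respects composition produces the desired isomorphism of graded algebras.

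The main obstacle is that Lemma~\ref{lemma:reduction_iso} additionally requires hypothesis (RG) on the stabilisers $\Ct_\G(x)$, which is not listed among the hypotheses here. The single-orbit base case $\Bs \Ct_\G(x)$ of that lemma invokes Corollary~\ref{prop:class_reduction} and Theorem~\ref{thm:red_torus_weyl_hom}, both of which rely on invertibility of $\lvert\pi_0 \Ct_\G(x)\rvert$ and the torsion number $t_{\Ct_\G(x)_\red}$ in $\ring$. I expect (RG) to be implicitly understood, or alternatively that the restriction of $\End^\bullet$ to the balanced bidegrees $[2n](n)$ permits a direct Chow-theoretic comparison of $Z_{i,j}/\G$ that dispenses with the invertibility hypotheses.
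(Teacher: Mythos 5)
Your proof follows essentially the same route as the paper: the paper likewise deduces the isomorphism on each graded piece from Lemma~\ref{lemma:reduction_iso} applied to the pointwise pure Tate objects of $\widehat{\Tilt}^{\Spr}$ (Lemma~\ref{lemma:pointwise_springer}), and gets the algebra structure from the fact that the reduction functor respects composition; your extra step of restricting to $\N_{i,j}/\G$ merely makes explicit the support argument the paper leaves implicit. Your concern about (RG) is well founded but does not separate you from the paper: the paper's own proof also invokes Lemma~\ref{lemma:reduction_iso}, whose hypotheses include the invertibility conditions, so (RG) is tacitly assumed in Lemma~\ref{lemma:iso_end} as well (and is in any case imposed in Theorem~\ref{thm:springer_setup}, where the lemma is used).
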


\begin{proof}
    Denote by $\mathrm{E}^\bullet$ and $\mathrm{E}^\bullet_\redm$ the two algebras in consideration. Since the objects of $\widehat{\Tilt}^{\Spr}$ are pointwise pure Tate, by Lemma \ref{lemma:reduction_iso}, one has for any $i, \, j \in I$ an isomorphism
\[
    \Hom_{\DM(\N/\G, \ring)}(\mu_{i!} \ring, \mu_{j!} \ring [2n](n)) = \Hom_{\DM_{\redm}(\N/\G, \ring)}(\redm(\mu_{i!} \ring), \redm(\mu_{j!} \ring [2n](n))),
\]
which implies $\mathrm{E}^n = \mathrm{E}^n_\redm$. Moreover, since $\redm$ is a functor, it is compatible with composition, giving the isomorphism of algebras.
\end{proof}

\begin{defn}
    The \emph{(reduced) motivic extension algebra} is the algebra
    \[
        \mathrm{E}_{(\redm)}^\bullet = \End^\bullet_{\DM_{(\redm)}(\N/\G, \ring)}(\Tilt^\Spr_{(\redm)}).
    \]
    This is a graded algebra whose $n$-th graded part is given by
    \[
        \mathrm{E}^n_{(\redm)} = \bigoplus_{i, j} \Hom_{\DM_{(\redm)}(\N/\G, \ring)}(\mu_{i!} \ring, \mu_{j!} \ring [2n](n)).
    \]
\end{defn}

With this notation in mind, Corollary \ref{cor:springer} and Lemma \ref{lemma:iso_end} imply
\[
    \DM^{\Spr}_\redm(\N/\G, \ring) \cong \mathrm{grPerf}(\mathrm{E}^\bullet).
\]

As usual, the algebra $\mathrm{E}^\bullet$ admits a geometric description using Chow groups. In the context of quotient stacks over a field, this was done, for example, by Aranha and Chowdhury in \cite{Chow-weight-stacks}. Let $\Z_{i, j} = \widetilde{\N}_i \times_\N \widetilde{\N}_j$ and $d_i = \dim \widetilde{\N}_i$.

\begin{prop}[\cite{Chow-weight-stacks}, Theorem 5.2]
    For any $n \in \ZZ$, there is a natural isomorphism
    \[
        \Hom_{\DM(\N/\G, \ring)}(\mu_{i!}[2m](m) \ring, \mu_{j!}\ring[2n](n)) = \CH_{d_j-n+m}^G(\Z_{i,j})_\ring.
    \]
\end{prop}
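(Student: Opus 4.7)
The plan is to reduce the $\Hom$-group on the target stack $\N/\G$ to a $\Hom$-group on the fiber product stack $\Z_{i,j}/\G$ using the six-functor adjunctions and base change of Proposition \ref{prop:six_functors_DM_stacks}, and then invoke the Aranha--Chowdhury identification of motivic Borel--Moore homology with equivariant Chow groups to conclude.

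First, since $\mu_j$ is proper, $\mu_{j!} = \mu_{j*}$. Combining the adjunction $\mu_j^* \dashv \mu_{j*}$, the proper base change isomorphism $\mu_j^* \mu_{i!} \cong p_{j!}\, p_i^*$ associated to the Cartesian square
\[
\begin{tikzcd}
\Z_{i,j}/\G \ar[r, "p_j"] \ar[d, "p_i"'] & \widetilde{\N}_j/\G \ar[d, "\mu_j"] \\
\widetilde{\N}_i/\G \ar[r, "\mu_i"'] & \N/\G,
\end{tikzcd}
\]
and the adjunction $p_{j!} \dashv p_j^!$, I would produce a chain of natural isomorphisms
\begin{align*}
\Hom_{\N/\G}(\mu_{i!}\ring[2m](m), \mu_{j!}\ring[2n](n))
&\cong \Hom_{\widetilde{\N}_j/\G}(\mu_j^*\mu_{i!}\ring[2m](m), \ring[2n](n)) \\
&\cong \Hom_{\widetilde{\N}_j/\G}(p_{j!}\, p_i^*\ring[2m](m), \ring[2n](n)) \\
&\cong \Hom_{\Z_{i,j}/\G}(\ring[2m](m), p_j^!\ring[2n](n)),
\end{align*}
where $p_i^*\ring = \ring$ has been used in the last step.

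Next, the scheme $\widetilde{\N}_j$ is smooth of dimension $d_j$, so the structure map $q_j \colon \widetilde{\N}_j/\G \to \Bs\G$ is smooth of relative dimension $d_j$; relative purity (Proposition \ref{prop:six_functors_DM_stacks}(8)) then gives $q_j^!\ring \cong \ring[2d_j](d_j)$. Letting $s \coloneqq q_j \circ p_j \colon \Z_{i,j}/\G \to \Bs\G$, this yields $s^!\ring \cong p_j^!\ring[2d_j](d_j)$, so the last $\Hom$-group is rewritten as
\[
\Hom_{\Z_{i,j}/\G}(\ring[2(d_j - n + m)](d_j - n + m), s^!\ring).
\]
By \cite[Theorem 5.2]{Chow-weight-stacks}, this group is precisely the equivariant Chow group $\CH_{d_j - n + m}^\G(\Z_{i,j})_\ring$, completing the proof.

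The main subtlety, rather than any computational difficulty, lies in making sure the indexing conventions match: the Aranha--Chowdhury identification may be stated with dualizing complex either over $\baseS$ or over $\Bs\G$, with a corresponding shift by $\dim \G$, and one has to verify that the resulting degree in the Chow group agrees with $d_j - n + m$. Once this is pinned down, the entire argument is formal, being a direct consequence of the six-functor formalism on stacks established in Section \ref{prop:six_functors_DM_stacks}.
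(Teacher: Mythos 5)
Your argument is correct, but note that the paper offers no proof of this statement at all: it is imported verbatim as a citation of Aranha--Chowdhury, so there is nothing internal to compare against. What you have written is the standard derivation one would give from their comparison theorem: properness and representability of $\mu_j$ give $\mu_{j!}\simeq\mu_{j*}$, then adjunction, proper base change along the Cartesian square defining $\Z_{i,j}/\G$, and adjunction again identify the $\Hom$-group with $\Hom_{\Z_{i,j}/\G}(\ring[2(d_j-n+m)](d_j-n+m), s^!\ring)$ after using relative purity for the smooth representable map $q_j\colon\widetilde\N_j/\G\to\Bs\G$ of relative dimension $d_j$ (here the standing hypothesis that each $\widetilde\N_j$ is smooth and connected is what makes $d_j$ well defined). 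The indexing subtlety you flag does resolve in your favour: with the Edidin--Graham/Kresch convention one has $\CH^\G_k(\Z_{i,j})=\CH_{k-\dim\G}(\Z_{i,j}/\G)$, and the absolute Borel--Moore theory $\Hom(\ring[2(k-\dim\G)](k-\dim\G),\pi^!\ring)$ relative to the base field differs from your group $\Hom(\ring[2k](k),s^!\ring)$ relative to $\Bs\G$ by exactly the purity twist $[-2\dim\G](-\dim\G)$ coming from $\Bs\G\to\baseS$, so the degree $d_j-n+m$ is the right one. So your proof is a complete and correct substitute for the citation, modulo quoting the Chow/Borel--Moore comparison of \cite{Chow-weight-stacks} in whichever normalization that paper uses.
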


As stated, this proposition just gives an isomorphism of $\Lambda$-modules
\[
    \mathrm{E}^n = \bigoplus_{i, j} \CH_{d_j-n}^\G(\Z_{i,j})_\ring.
\]
The same proof as in \cite[Proposition 2.39 and Proposition 3.16]{jin_borel-moore} (see also \cite[Theorem 3.17]{jin_borel-moore}) shows this equivalence is in fact compatible with multiplication, giving the following result.

\begin{cor}\label{cor:iso_alg}
    There is an isomorphism of graded algebras
    \[
        \mathrm{E}^\bullet \cong \bigoplus_{i,j} \CH^\G_{d_j - \bullet} (\Z_{i,j})_\ring.
    \]
\end{cor}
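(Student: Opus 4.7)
The previous proposition already supplies the graded $\ring$-module isomorphism $\mathrm{E}^\bullet \cong \bigoplus_{i,j}\CH^\G_{d_j-\bullet}(\Z_{i,j})_\ring$, so the only thing left to verify is that this isomorphism intertwines the composition product on the left with the convolution product on the right. The plan is to interpret both operations purely in terms of the six functor formalism on the triple fiber product $\Z_{i,j,k} \coloneqq \widetilde{\N}_i \times_{\N} \widetilde{\N}_j \times_{\N} \widetilde{\N}_k$ and then invoke the Borel--Moore-style computation of Jin (\cite[Prop.~2.39, Prop.~3.16, Thm.~3.17]{jin_borel-moore}), which carries over verbatim from Borel--Moore motivic homology to equivariant Chow groups because Proposition \ref{prop:six_functors_DM_stacks} gives us exactly the same six functor formalism on quotient stacks.

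First I would rewrite each graded piece as a pure mapping space. Using the adjunction $(\mu_{i!}, \mu_i^!)$ and base change applied to the Cartesian square with corners $\Z_{i,j}, \widetilde{\N}_i, \widetilde{\N}_j, \N$, one obtains
\[
\Hom_{\DM(\N/\G)}(\mu_{i!}\ring, \mu_{j!}\ring[2n](n)) \cong \Hom_{\DM(\Z_{i,j}/\G)}(\ring, p_2^!\ring[2n](n)),
\]
where $p_1, p_2$ are the projections from $\Z_{i,j}$; combined with the smoothness of $\widetilde{\N}_j$ (so that $p_2^!\ring = p_2^*\ring[2d_j](d_j)$), this is exactly the definition of $\CH^\G_{d_j-n}(\Z_{i,j})_\ring$ used by Aranha--Chowdhury. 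This reformulation makes the identifying isomorphism explicit on cycles.

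Next I would trace the composition of two morphisms $\alpha \colon \mu_{i!}\ring \to \mu_{j!}\ring[2n](n)$ and $\beta \colon \mu_{j!}\ring[2n](n) \to \mu_{k!}\ring[2(n+m)](n+m)$ through these identifications. Applying $\mu_i^!$ to $\beta$, using base change along the Cartesian square
\[
\begin{tikzcd}
\Z_{i,j,k} \ar[r, "q_{23}"] \ar[d, "q_{12}"'] & \Z_{j,k} \ar[d, "p_1"] \\
\Z_{i,j} \ar[r, "p_2"'] & \widetilde{\N}_j
\end{tikzcd}
\]
and the projection formula, the composition $\beta \circ \alpha$ becomes the pushforward along $q_{13}\colon \Z_{i,j,k} \to \Z_{i,k}$ of the external product of the classes of $\alpha$ and $\beta$, intersected over $\widetilde{\N}_j$. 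This is precisely the convolution formula $\beta \star \alpha = q_{13!}(q_{12}^*\alpha \cdot q_{23}^*\beta)$ on equivariant Chow groups.

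The main obstacle is making this last identification rigorous, i.e.\ checking that the six functor manipulations really produce the intersection-theoretic convolution and not some twist of it. The core computation is exactly the one carried out by Jin in \emph{loc.\ cit.}\ for Borel--Moore motivic homology: the proof there uses only base change, the projection formula, Verdier duality and relative purity, all of which are available on quotient stacks by Proposition \ref{prop:six_functors_DM_stacks}. Once the compatibility of the relative fundamental classes with the various $q_{ab}^*$ and $q_{ab!}$ is checked along these lines, the isomorphism of graded $\ring$-modules upgrades to an isomorphism of graded $\ring$-algebras, as desired.
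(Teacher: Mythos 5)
Your proposal is correct and takes essentially the same route as the paper: the graded-module identification is the Aranha--Chowdhury computation, and multiplicativity is obtained by transporting Jin's convolution arguments (\cite[Prop.~2.39, Prop.~3.16, Thm.~3.17]{jin_borel-moore}) to quotient stacks, which works because Proposition \ref{prop:six_functors_DM_stacks} supplies the same six-functor ingredients (base change, projection formula, purity) used there. One parenthetical is off: $p_2$ itself need not be smooth, so the purity statement you want is $p_2^!(\ring[2d_j](d_j)) \cong \omega_{\Z_{i,j}/\baseS}$, coming from smoothness of $\widetilde{\N}_j$, rather than $p_2^!\ring \cong p_2^*\ring[2d_j](d_j)$; the degree bookkeeping and the conclusion are unaffected.
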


Putting everything together, we have our main theorem.

\begin{thm}\label{thm:springer_setup}
    Under the hypothesis (PT), (FO) and (RG), there is an equivalence of categories
    \[
        \DM^{\Spr}_\redm(\N/\G, \ring) \cong \mathrm{grPerf}\left(\bigoplus_{i,j} \CH^\G_{d_j - \bullet}(\Z_{i,j})_\ring\right).
    \]
\end{thm}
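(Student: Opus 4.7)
The theorem is essentially an assembly of three results proved earlier in the section, so my plan is to chain them together and verify that the hypotheses line up. The plan is to first invoke Corollary \ref{cor:springer} to obtain an equivalence
\[
    \DM^{\Spr}_\redm(\N/\G, \ring) \cong \mathrm{grPerf}(\mathrm{E}^\bullet_\redm),
\]
where $\mathrm{E}^\bullet_\redm = \End^\bullet_{\DM_\redm(\N/\G,\ring)}(\Tilt^\Spr_\redm)$. This step uses all three hypotheses (PT), (FO), (RG) through Lemma \ref{lemma:pointwise_springer}, which verifies that $\widehat{\Tilt}^\Spr_\redm$ is pointwise pure Tate and that its higher Ext groups vanish in the reduced setting, so it forms a tilting collection in the sense needed by the abstract tilting theorem.

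Next I would compare the reduced and unreduced endomorphism algebras via Lemma \ref{lemma:iso_end}, which gives an isomorphism $\mathrm{E}^\bullet_\redm \cong \mathrm{E}^\bullet$. Here the hypotheses (PT) and (FO) allow an application of Lemma \ref{lemma:reduction_iso} on the closed stack $\N_{i,j}/\G \subseteq \N/\G$ (which has finitely many orbits and residual gerbes whose torsion and component orders are invertible in $\ring$ by (RG)), so that reduction induces isomorphisms on each graded piece, and functoriality of $\redm$ promotes this to an isomorphism of graded algebras.

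Finally, I would rewrite $\mathrm{E}^\bullet$ in geometric terms using Corollary \ref{cor:iso_alg}, which identifies
\[
    \mathrm{E}^\bullet \cong \bigoplus_{i,j} \CH^\G_{d_j-\bullet}(\Z_{i,j})_\ring
\]
as graded algebras, the multiplication matching convolution on Chow groups via the Aranha–Chowdhury computation combined with the compatibility results of Jin. Composing the three identifications yields exactly the claimed equivalence.

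The only step that requires any real care is checking that the tilting equivalence from Corollary \ref{cor:springer} applies, since one must verify that $\widehat{\Tilt}^\Spr_\redm$ is genuinely \emph{tilting} rather than merely negative; this is precisely why reducing is essential (it kills the higher motivic cohomology of the base that would otherwise produce extra Ext-groups in negative degrees), and the argument is packaged in Lemma \ref{lemma:pointwise_springer}. Everything else is bookkeeping: matching the hypotheses of each cited lemma and threading together the identifications. I would therefore expect the written proof to be very short, of the form \textit{``Combine Corollary \ref{cor:springer}, Lemma \ref{lemma:iso_end} and Corollary \ref{cor:iso_alg}.''}
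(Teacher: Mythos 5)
Your proposal matches the paper's proof exactly: the theorem is obtained by combining Corollary \ref{cor:springer}, Lemma \ref{lemma:iso_end} and Corollary \ref{cor:iso_alg}, just as you predicted. The supporting discussion you give (the role of (PT), (FO), (RG) through Lemma \ref{lemma:pointwise_springer} and Lemma \ref{lemma:reduction_iso}, and the Chow-theoretic identification via Aranha--Chowdhury and Jin) accurately reflects how those cited results are established earlier in the section.
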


\begin{proof}
    This follows from Corollary \ref{cor:springer}, Lemma \ref{lemma:iso_end} and Corollary \ref{cor:iso_alg}.
\end{proof}

\begin{rem}
    During the writing of this paper, the author became aware that (a stronger version of) the same result was claimed by Eberhardt in \cite[\S 3.4]{k-motives}. Corollary \ref{prop:class_reduction} and Corollary \ref{cor:orth_linear_gp} are, together, analogues of \cite[Proposition 3.2]{k-motives} and at the time of writing it is not clear to the author how the proof of Proposition 3.2 in \textit{loc. cit.}, which uses explicit computations for $\K$-theory, generalizes to $\DM$. With both corollaries (or, equivalently, with Proposition 3.2 in \textit{loc. cit.}) as input, our proof, inspired by \cite{motivic_springer}, is essentially the same as the one in \cite{k-motives}.  
\end{rem}

If $\baseS = \Spec \overline{\FF}_p$ and $\ring = \QQ$, this recovers the main theorem of \cite{motivic_springer}. As a final comment, in the case of an algebraically closed field of characteristic zero, the weight structure determined by $\widehat{\Tilt}^{\Spr}_{(\redm)}$ is just the restriction of the Chow weight structure defined by Aranha and Chowdhury in \cite{Chow-weight-stacks}.

\begin{prop}
    If $\baseS$ is the spectrum of an algebraically closed field of characteristic zero, then the motives $\mu_{i!} \ring(n)[2n]$ are in the heart of the Chow weight structure for any $n \in \ZZ$.
\end{prop}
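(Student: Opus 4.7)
The plan is to recognize each $\mu_{i!}\ring(n)[2n]$ directly as a generator of the Chow weight structure on $\DM([\N/\G], \ring)$ built in \cite{Chow-weight-stacks}. Following Bondarko's template, the Chow heart on a finite-type algebraic stack $\mathscr{Y}$ is the additive, idempotent-complete subcategory of $\DM(\mathscr{Y}, \ring)$ generated by motives of the form $f_*\ring_{\mathscr{X}}(n)[2n]$, where $f \colon \mathscr{X} \to \mathscr{Y}$ is a proper representable morphism with $\mathscr{X}$ a smooth algebraic stack of finite type over $\baseS$ and $n \in \ZZ$. Once this description is in hand, the proposition is almost immediate.

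First I would verify that $\mathscr{X}_i \coloneqq [\widetilde{\N}_i/\G]$ fits into that framework. In characteristic zero every linear algebraic group is smooth, so the quotient map $\widetilde{\N}_i \to \mathscr{X}_i$ is smooth; since $\widetilde{\N}_i$ is smooth by hypothesis, $\mathscr{X}_i$ is smooth as an algebraic stack over $\baseS$. The morphism $\mu_i \colon \mathscr{X}_i \to [\N/\G]$ is representable, as it comes from the $\G$-equivariant morphism $\widetilde{\N}_i \to \N$ between schemes, and proper by assumption. Proposition \ref{prop:six_functors_DM_stacks}(9) then gives $\mu_{i!} \cong \mu_{i*}$, so
\[
\mu_{i!}\ring_{\mathscr{X}_i}(n)[2n] \;\cong\; \mu_{i*}\ring_{\mathscr{X}_i}(n)[2n],
\]
which is one of the generating objects of the Chow heart; in particular it lies in it.

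The main obstacle is matching this argument with the precise form of Aranha and Chowdhury's construction. If in \cite{Chow-weight-stacks} the generators are instead indexed by smooth proper (rather than merely proper with smooth source) morphisms, one must reduce to that case: in characteristic zero the quasi-projective $\widetilde{\N}_i$ admits a smooth projective compactification, and resolution of singularities (Hironaka) applied to a smooth atlas of $[\N/\G]$ allows one to present $\mu_{i!}\ring$ as a retract of direct sums of motives of the required form, via the standard Bondarko weight-complex argument. The characteristic-zero hypothesis enters at exactly this resolution step (and also ensures the smoothness of $\G$ invoked above), which explains why the proposition is stated under it.
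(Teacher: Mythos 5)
Your proposal is correct and takes essentially the same approach as the paper: since $\widetilde{\N}_i$ and $\N$ are quasi-projective and $\mu_i$ is proper, the induced representable morphism $[\widetilde{\N}_i/\G] \to [\N/\G]$ is projective with smooth source, so $\mu_{i!}\ring(n)[2n] \cong \mu_{i*}\ring(n)[2n]$ is one of the defining generators of the Chow heart of \cite{Chow-weight-stacks}. Your second-paragraph fallback via compactifications and resolution of singularities is not needed, as properness plus quasi-projectivity already yields projectivity in the required form; the characteristic-zero hypothesis is simply the setting in which the Chow weight structure of \cite{Chow-weight-stacks} is invoked.
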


\begin{proof}
    Since $\widetilde{\N}_i$ and $\N$ are quasi-projective varieties over $\baseS$, the maps $\mu_i \colon \widetilde{\N}_i/\G \to \N/\G$ are projective. By definition of the Chow weight structure, one has $\mu_{i!} \ring(n)[2n] \in \DM_{\gm}(\N/\G)^{\heartsuit_w}$ for any $n \in \ZZ$.
\end{proof}

\subsection{The Springer resolution}

Let $\G$ be a reductive group over an algebraically closed field $\basef$ of characteristic exponent $p$, let $\N_\G$ be its nilpotent cone and let $\ring$ be a $\ZZ[\frac{1}{p}]$-algebra. We are going to consider the case of the Springer resolution $\mu \colon \widetilde{\N}_\G \to \N_\G$ and the pullback of this map with itself is the Steinberg variety $\mathrm{St}_\G \coloneqq \widetilde{\N}_\G \times_{\N_\G} \widetilde{\N}_\G$. Since we are going to use \cite[Theorem 1.1]{springer_motives}, in this section we work with the hypothesis that either $p=1$ or the following holds:
\begin{enumerate}
    \item The prime $p$ is a good prime for every classical constituent of $\G$;
    \item One has\footnote{Eberhardt works with a slighly stronger condition, but as was remarked in the paper, one only needs the Jacobson-Morozov theorem, which is true for $p>h$, as proved in \cite{jacobson-morozov}.} $p>h$ where $h$ is the maximum of all Coxeter numbers of exceptional constituent of $\G$.
\end{enumerate}

Before stating the result, we need an analogue of \cite[Definition 6.1]{modular_springer_iii} for an arbitrary ring, instead of a field.

\begin{defn}
    Let $\G$ be a (possibly disconnected) reductive group over $\basef$ of characteristic exponent $p$. We say the $\ZZ[\frac{1}{p}]$-algebra $\ring$ is \emph{rather good} (resp. \emph{easy}) for $\G$ if $\left| \mathrm{A}_\G(e) \right|$ is invertible in $\ring$ for every $e \in \N_\G$ (resp. if $\left|\sW_\G\right|$ is invertible in $\ring$).
\end{defn}

Before stating the main result, we need the following lemma to make sense of the parabolic induction.

\begin{lemma}
    If $\ring$ is rather good for $\G$, then it is rather good for every Levi subgroup $\L$ of $\G$.
\end{lemma}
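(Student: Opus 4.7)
The plan is to show that for every $e \in \N_\L$, viewed inside $\N_\G$ via the inclusion $\L \subseteq \G$, the map $\mathrm{A}_\L(e) \to \mathrm{A}_\G(e)$ induced by $\Ct_\L(e) \hookrightarrow \Ct_\G(e)$ is injective. The resulting divisibility $\lvert \mathrm{A}_\L(e)\rvert \mid \lvert \mathrm{A}_\G(e)\rvert$ then immediately gives invertibility of $\lvert \mathrm{A}_\L(e)\rvert$ in $\ring$, since $\ring$ is rather good for $\G$ and every $e \in \N_\L$ lies in $\N_\G$.

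To prove the injectivity, I would first apply Jacobson--Morozov, valid in $\ll$ under the standing hypotheses on $p$ (which are inherited by any Levi subgroup, since good primes and Coxeter-number bounds only decrease upon passage to Levis), to extend $e$ to an $\mathfrak{sl}_2$-triple $(e, h, f)$ inside $\ll \subseteq \gg$. Using the standard Levi decomposition $\Ct_\L(e) = \Ct_\L(e, h, f) \ltimes U_e^\L$ (and analogously for $\G$), with $U_e^\L$ a connected unipotent radical, one obtains identifications
\[
\mathrm{A}_\L(e) = \pi_0(\Ct_\L(e, h, f)), \qquad \mathrm{A}_\G(e) = \pi_0(\Ct_\G(e, h, f)).
\]
Writing $\L = \Ct_\G(S)$ for $S = Z(\L)^\circ$ a central torus, one has $\Ct_\L(e, h, f) = K^S$ where $K \coloneqq \Ct_\G(e, h, f)$ is a possibly disconnected reductive group (reductive because it centralizes a semisimple triple). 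The problem thus reduces to the purely group-theoretic claim that $\pi_0(K^S) \hookrightarrow \pi_0(K)$ for any torus $S \subseteq K$.

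This last claim is then handled as follows. Since $S$ is connected, it acts trivially on the finite discrete group $\pi_0(K)$, so the extension $1 \to K^\circ \to K \to \pi_0(K) \to 1$ produces an injection $K^S / (K^\circ)^S \hookrightarrow \pi_0(K)$. Moreover $(K^\circ)^S = \Ct_{K^\circ}(S)$ is the centralizer of a torus inside a connected reductive group, hence itself a connected Levi subgroup of $K^\circ$, giving $(K^\circ)^S = (K^S)^\circ$; consequently $\pi_0(K^S) = K^S/(K^S)^\circ$ injects into $\pi_0(K)$, completing the proof. The main non-routine input is Jacobson--Morozov, which is what reduces the a priori non-reductive centralizer problem to a clean statement about torus fixed points inside a reductive group; once inside that setting, everything else is formal manipulation of component groups and the connectedness of centralizers of tori in connected reductive groups.
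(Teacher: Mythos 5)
Your proof is correct, but it takes a genuinely different route from the paper. The paper never compares $\mathrm{A}_\L(e)$ and $\mathrm{A}_\G(e)$ for the \emph{same} element: it quotes \cite[Lemma 3.10]{modular_springer_ii}, which produces, for $x\in\N_\L$, an element $y$ in the Lusztig--Spaltenstein induction of the $\L$-orbit of $x$ with $\lvert\mathrm{A}_\L(x)\rvert$ dividing $\lvert\mathrm{A}_\G(y)\rvert$, and invertibility follows since $y\in\N_\G$. You instead prove the sharper and more natural statement that for $e\in\N_\L\subseteq\N_\G$ the map $\mathrm{A}_\L(e)\to\mathrm{A}_\G(e)$ is injective; your final component-group manipulation (the kernel of $K^S\to\pi_0(K)$ is $\Ct_{K^\circ}(S)$, which is connected, hence equals $(K^S)^\circ$) is exactly right, and note that $S\subseteq K$ indeed holds because $S$, being central in $\L$, acts trivially on the Lie algebra of $\L$ and so centralizes the triple. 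Two remarks. First, the Jacobson--Morozov detour is the only fragile point: the standing hypotheses require only that $p$ be good for the classical constituents, which in small characteristic is weaker than what a clean $\mathfrak{sl}_2$-theory (the decomposition $\Ct_\G(e)=\Ct_\G(e,h,f)\ltimes \R_\mathrm{u}(\Ct_\G(e))$ for your chosen triple, applied simultaneously in $\L$ and $\G$) literally demands; to be safe one should phrase this step with associated cocharacters in the sense of Premet--Jantzen, observing that a cocharacter associated to $e$ in $\L$ is also associated to $e$ in $\G$. Second, you can avoid the issue entirely: writing $S$ for the connected centre of $\L$, so that $\L=\Ct_\G(S)$ and $S$ centralizes $e$, one has $\Ct_\L(e)=\Ct_{\Ct_\G(e)}(S)$, and since centralizers of tori are connected in \emph{any} connected linear algebraic group (no reductivity needed), $\Ct_{\Ct_\G(e)^\circ}(S)$ is connected and $\pi_0(\Ct_\L(e))\hookrightarrow\pi_0(\Ct_\G(e))$ follows at once, with no $\mathfrak{sl}_2$-triples and no constraint on $p$. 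In summary: the paper's proof is shorter because it outsources the divisibility to \cite{modular_springer_ii}; yours is essentially self-contained and yields a stronger statement, and becomes both simpler and more robust if the triple decomposition is replaced by the direct torus-centralizer argument.
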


\begin{proof}
    Suppose $\ring$ is rather good for $\G$ and let $\L$ be a Levi subgroup of $\G$. If follows from \cite[Lemma 3.10]{modular_springer_ii} that for any element $x \in \N_\L$ there exists $y \in \N_\G$ such that $\left| \mathrm{A}_\L(x)\right|$ divides $\lvert\mathrm{A}_\G(y)\rvert$, indeed one may take $y$ in the Lusztig-Spaltenstein induction of the $\L$-orbit of $x$. In particular $\left| \mathrm{A}_\L(x)\right|$ is invertible in $\ring$, concluding the proof.
\end{proof}

\begin{thm}\label{thm:springer_correspondence}
    Suppose $\ring$ is rather good for $\G$. 
    Then there exists a canonical equivalence of categories
    \[
        \DM^{\Spr}_{\redm}(\N_\G/\G, \ring) \cong \mathrm{grPerf}(\CH^\bullet(\St_\G/\G)_\ring)
    \]
    which is compatible with parabolic induction.
\end{thm}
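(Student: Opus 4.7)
The strategy is to specialize Theorem \ref{thm:springer_setup} to the index set $I = \{*\}$ consisting of the single Springer map $\mu \colon \widetilde{\N}_\G \to \N_\G$, so that the extension algebra collapses to $\CH_{d-\bullet}^\G(\St_\G)_\ring$ where $d = \dim \widetilde{\N}_\G$; after the standard reindexing that identifies equivariant Chow homology with equivariant Chow cohomology on the quotient stack, this becomes $\CH^\bullet(\St_\G/\G)_\ring$. Thus the equivalence of $\infty$-categories will follow once (PT), (FO) and (RG) are verified in this context.

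Hypothesis (FO) is the classical finiteness of nilpotent orbits (Bala--Carter). Hypothesis (PT), that every Springer fiber has pure Tate motive, is exactly \cite[Theorem 1.1]{springer_motives}, whose good-prime assumptions are precisely those imposed at the start of the subsection. For (RG) one must show that $|\pi_0 \Ct_\G(x)| = |\mathrm{A}_\G(x)|$ and $t_{\Ct_\G(x)_\red}$ are both invertible in $\ring$ for every $x \in \N_\G$. The first is immediate from the ``rather good'' assumption. For the second, the plan is to argue that under the standing good-prime hypotheses on $p$ the reductive quotient $\Ct_\G(x)_\red$ has torsion primes bounded by the torsion primes of a Levi of $\G$ together with primes dividing $|\mathrm{A}_\G(x)|$, all of which are invertible in $\ring$ by the same rather good assumption.

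For compatibility with parabolic induction, the plan is as follows. Geometrically, the induction functor along a parabolic $\P = \L \ltimes \sU$ is the composition $q_! p^*$ for the diagram $[\N_\L/\L] \xleftarrow{p} [\N_\P/\P] \xrightarrow{q} [\N_\G/\G]$; one checks that it preserves Springer subcategories using proper base change and the fact that the Lusztig--Spaltenstein correspondence sends the Springer sheaf for $\L$ into the Springer sheaf for $\G$. On the algebraic side, induction is implemented by an $(\mathrm{E}_\G, \mathrm{E}_\L)$-bimodule coming from the equivariant Chow groups of $\widetilde{\N}_\G \times_{\N_\G} \widetilde{\N}_\L$. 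Since the equivalence produced by Theorem \ref{thm:springer_setup} is the mapping-spectra construction $\Hom^\bullet_{\DM_\redm}(\mu_!\ring, -)$, naturality of this construction under the six-functor formalism yields the desired intertwining; the preceding lemma ensures that $\ring$ is rather good for every Levi of $\G$, so Theorem \ref{thm:springer_setup} applies equally to $\L$.

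The main obstacle, in my view, is controlling the torsion number $t_{\Ct_\G(x)_\red}$ uniformly over all $x \in \N_\G$ from the rather good hypothesis alone together with the stated condition on $p$; this is where detailed information about centralizers of nilpotent elements and their torsion primes is needed. The remaining steps --- the identification of the extension algebra with an equivariant Chow algebra and the verification of parabolic compatibility --- are fairly mechanical applications of the framework developed in the previous section together with the Aranha--Chowdhury description of Chow groups on quotient stacks.
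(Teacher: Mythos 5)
Your proposal follows essentially the same route as the paper for the equivalence itself: specialize Theorem \ref{thm:springer_setup} to the single map $\mu\colon\widetilde{\N}_\G\to\N_\G$, cite \cite{springer_motives} for (PT) and the classical finiteness of nilpotent orbits for (FO), then reindex the extension algebra as $\CH^\bullet(\St_\G/\G)_\ring$. Two points deserve comment. First, on (RG): you are right that the rather good hypothesis only directly gives invertibility of $\lvert\pi_0\Ct_\G(x)\rvert=\lvert\mathrm{A}_\G(x)\rvert$, and that invertibility of the torsion numbers $t_{\Ct_\G(x)_\red}$ requires a separate argument; the paper's proof is in fact silent on this point (it invokes Theorem \ref{thm:springer_setup} after checking only (PT) and (FO)), so the obstacle you flag is genuine and is closed neither by your sketch nor explicitly in the paper --- your proposed bound on the torsion primes of the reductive quotients of nilpotent centralizers in good characteristic is precisely the missing lemma and would have to be supplied (compare the table of bad primes, torsion primes and component groups at the end of the subsection). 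Second, for compatibility with parabolic induction you propose an $(\mathrm{E}_\G,\mathrm{E}_\L)$-bimodule built from $\CH^\G(\widetilde{\N}_\G\times_{\N_\G}\widetilde{\N}_\L)$ together with ``naturality of the mapping-spectra construction''; the paper argues differently: the correspondence $\N_\L/\L\leftarrow\N_\P/\P\rightarrow\N_\G/\G$ (smooth of relative dimension $0$, then proper) gives a \emph{weight exact} functor, and likewise on Steinberg varieties, so the intertwining follows from the functoriality of the weight complex functor (Proposition \ref{prop:funct_weight}, used through Proposition \ref{thm:tilting_equivalence_2}) combined with Jin's compatibility of the Borel--Moore/Chow identification with proper pushforward and smooth pullback \cite{jin_borel-moore}. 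Your mechanism can be made to work, but as written the ``naturality'' step conceals exactly these two inputs (weight exactness of the induction correspondence, and compatibility of the Chow-theoretic description of $\Hom$-groups with $f_!$ and $f^*$), so that part of your argument should be regarded as a sketch rather than a complete proof, whereas the rest matches the paper's proof.
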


\begin{proof}
    The pure Tate property of the Springer fibers was proved by Eberhardt in \cite{springer_motives} and the fact that the nilpotent cone has finitely many orbits is a standard result, therefore the result follows from Theorem \ref{thm:springer_setup}. To understand the compatibility with the parabolic induction, let $\P$ be a parabolic subgroup of $\G$ with Levi factor $\L$. The diagram
    \[
        \N_\L/\L \xleftarrow{\pi} \N_\P/\P \xrightarrow{\mu} \N_\G/\G,
    \]
    in which $\pi$ is smooth of relative dimension 0 and $\mu$ is proper, defines a weight exact functor
    \[
        \ind_{L \subset P}^G \coloneqq \pi^* \mu_! \colon \DM^\Spr_\redm(\N_\L/\L, \ring) \to \DM^\Spr_\redm(\N_\G/\G, \ring).
    \] 
    
    Similarly, one has a similar diagram taking the pullback
    \[
        \St_\L/\L \xleftarrow{\pi_\St} \St_\P/\P \xrightarrow{\mu_\St} \St_\G/\G,
    \]
    where $\pi_\St$ is smooth and $\mu_\St$ is proper, which induces a morphism of algebras
    \[
        \ind_{L \subset P, \St}^G \coloneqq \pi_\St^* \mu_{\St !} \colon \CH^\bullet(\St_\G/\G)_\ring \to \CH^\bullet(\St_\G/\G)_\ring.
    \]
    
    Now, remark the equivalence of categories is given by a composition of equivalences
    \[
        \DM^{\Spr}_{\redm}(\N_\G/\G, \ring) \xrightarrow{\simeq} \Ch^b(\DM^{\Spr}_{\redm}(\N_\G/\G, \ring)^{\heartsuit_w}) \xrightarrow{\simeq} \mathrm{grPerf}(\CH^\bullet(\St_\G/\G)_\ring).
    \]

    The functor on the left is an commutes with parabolic induction because of Lemma \ref{thm:tilting_equivalence_2}, and the right one commutes because of comparison results between Borel-Moore homology and Chow groups, as done in \cite[Proposition 2.3.12 and Proposition 2.3.13]{jin_borel-moore}.
\end{proof}

The following table gives a list of bad primes \cite[I.4.3]{conjugacy_classes}, torsion primes (i.e. prime factors of the torsion number) \cite[Corollary 1.13]{torsion_steinberg}, the order of the Weyl group \cite[\S 2.11]{humphreys_coxeter} and the Coxeter number $h$ \cite[\S 3.18]{humphreys_coxeter} for irreducible root systems and the prime factors of $\mathrm{A}_\G(\Or)$ for $\Or$ ranging through the nilpotent orbits of $\G$ for $\G$ simple group of adjoint type in good characteristic \cite[Chapter 13]{carter_finite}.

\begin{center}
\begin{tabular}{||c | c | c | c | c| c||} 
 \hline
 type & bad & torsion & $\lvert \sW \rvert$ & $h$ & $\mathrm{A}_\mathrm{G}(\Or)$ \\ [0.5ex] 
 \hline\hline
 $\mathrm{A}_n$ & - & - & $(n+1)!$ & $n+1$ & 1\\ 
 \hline
 $\mathrm{B}_n$ & 2 & 2 & $2^n \cdot n!$ & $2n$ & 2 \\
 \hline
 $\mathrm{C}_n$ & 2 & - &  $2^n \cdot n!$ & $2n$ & 2 \\
 \hline
 $\mathrm{D}_n$ & 2 & 2 & $2^{n-1} \cdot n!$ & $2(n-1)$ & 2 \\
 \hline
 $\mathrm{E}_6$ & 2, 3 & 2, 3 & $2^7 \cdot 3^4 \cdot 5$ & 12 & 2, 3  \\
 \hline
 $\mathrm{E}_7$ & 2, 3 & 2, 3 & $2^{10} \cdot 3^4 \cdot 5 \cdot 7$ & 18 & 2, 3  \\
 \hline
 $\mathrm{E}_8$ & 2, 3, 5 & 2, 3, 5 & $2^{14} \cdot 3^5 \cdot 5^2 \cdot 7$ & 30 & 2, 3, 5 \\
 \hline
 $\mathrm{F}_4$ & 2, 3 & 2, 3 & $2^7 \cdot 3^2$ & 12 & 2, 3 \\
 \hline
 $\mathrm{G}_2$ & 2, 3 & 2 & $2^2 \cdot 3$ & 6 & 2, 3 \\ 
 \hline
\end{tabular}
\end{center}

In particular, the correspondence we have found works with coefficients $\ring = \ZZ$ for $\G$ of type $\mathrm{A}_n$, with coefficients $\ring = \ZZ[\tfrac{1}{2}]$ for $\G$ classical group and works with coefficients $\ring = \ZZ[\frac{1}{30}]$ for any connected reductive group $\G$. 

\subsection{Quiver Hecke algebras} 

The Springer setup may also be applied to relate quiver varieties and quiver Hecke algebras. We give a sketch of how to adapt it to this context. Let $\basef$ be an algebraically closed field of exponent characteristic $p$ and $\mathrm Q$ be a quiver with finite sets of vertices $\mathrm{Q}_0$ and arrows $\mathrm{Q}_1$ with source and target maps
\[
    s,t \colon \mathrm{Q}_1 \to \mathrm{Q}_0.
\]

Let $\Gamma \coloneqq \ZZ_{\geq 0}[\mathrm{Q}_0]$ (resp. $\Gamma^+ \coloneqq \ZZ_{\geq 0}[\mathrm{Q}_0]\backslash\{0\}$) be the set of (non-trivial) dimension vectors. The \emph{dimension vector} of a $\mathrm{Q}_0$-graded $\basef$-vector space $\mathrm{V}$ is the tuple $\dim \mathrm{V} \coloneqq (\dim(\mathrm{V}_i))_{i \in \mathrm{Q}_0} \in \Gamma$. For such $\mathrm V$, we define by
\[
    \Rep(\mathrm V ) \coloneqq \prod_{\varphi \in \mathrm{Q}_1} \Hom(\mathrm{V}_{s(\varphi)}, \mathrm{V}_{t(\varphi)})
\]
the space of $\mathrm Q$-representations with underlying $\basef$-vector space $\mathrm V$. If $\mathrm V$ is the standard $\mathrm{Q}_0$-graded $\basef$-vector space with dimension vector $\dv$, we denote $\Rep(\dv) = \Rep(\mathrm V )$. The group
\[
    \GL(\dv) = \GL(\mathrm V ) = \prod_{i \in \mathrm{Q}_0} \GL(\mathrm{V}_i)
\]
acts on $\Rep(\dv)$ in a natural way.

For $\ell \in \ZZ_{\geq 0}$, a \emph{composition} of length $\ell$ of a dimension vector $\dv$ is a tuple $\pdv = (\pdv^j) \in (\Gamma^+)^\ell$ whose sum is $\dv$. A composition $\pdv$ is \emph{complete} if every $\pdv^j$ is a unit vector. We denote by $\Comp(\dv)$ (resp. $\Compf(\dv)$) the set of (complete) compositions of $\dv$. 

Continuing with the same notation, a partial flag $\underline{\mathrm V}$ of $\mathrm V$ of type $\pdv$ is a sequence of $\mathrm{Q}_0$-graded vector spaces
\[
    0 = \mathrm{V}^0 \subset \mathrm{V}^1 \subset \dotsc \subset \mathrm{V}^{\ell_{\pdv}} = \mathrm V 
\]
such that $\dim(\mathrm{V}^j/ \mathrm{V}^{j-1}) = \pdv^j$. There exists a smooth projective variety parametrizing such partial flags, which we will denote by $\Fl(\pdv)$.

If $\rho \in \Rep(\dv)$ is a $\mathrm Q$-representation, a partial flag $\underline{\mathrm V}$ is called \emph{strictly $\rho$-stable} if
\begin{equation} \label{eq:strictly_stable}
    \rho(\mathrm{V}^j) \subseteq \mathrm{V}^{j-1} \quad \textrm{ for every } 1 \leq j \leq \ell_{\pdv}.
\end{equation}

We consider the smooth $\basef$-variety
\[
    \mathscr{Q}(\pdv) \coloneqq \{(\rho, \underline{\mathrm V}) \in \Rep(\dv) \times \Fl(\pdv) \mid \underline{\mathrm V} \textrm{ is strictly $\rho$-stable}\}.
\]
It admits a natural $\GL(\dv)$-action and the projection 
\[
    \mu_{\pdv} \colon \mathscr{Q}(\pdv) \to \Rep(\dv)
\]
defines a $\GL(\dv)$-equivariant proper map. The fiber of a representation $\rho \in \Rep(\dv)$ is called a (strict) partial quiver flag variety and denoted by
\[
    \Fl_{\mathrm{str}}(\rho, \pdv) \coloneqq \{ \underline{\mathrm V} \in \Fl(\pdv) \mid \underline{\mathrm V} \textrm{ is strictly $\rho$-stable}\}.
\]

Before stating the theorem, we just need two more notations. First of all, we will denote by $\mathrm{R}_\dv$ the KLR algebra, originally defined by Khovanov and Lauda in \cite[\S 2.1]{KL} and Rouquier in \cite[\S 3.2]{rouquier}. We will say a $\ZZ[\frac{1}{p}]$-algebra $\ring$ is \emph{rather good} for a quiver $\mathrm Q$ if for every nilpotent representation $\rho$, the cardinality $\left|\pi_0(\Ct_{\GL(\dv)}(\rho))\right|$ and the torsion number $t_{\Ct_{\GL(\dv)}(\rho)}$ are invertible in $\ring$.

\begin{thm}
    Suppose $\mathrm Q$ is of Dynkin type or of type $\widetilde{\mathrm A}$ with cyclic orientation and suppose $\ring$ is rather good for $\mathrm Q$. Then there exists an equivalence of categories
    \[
        \DM^{\Spr}_\redm(\Rep(\dv)/\GL(\dv), \ring) \cong \mathrm{grPerf}(\mathrm{R}_\dv),
    \]
    where the Springer motives are taken with respect to the maps $\mu_{\pdv} \colon \mathscr{Q}(\pdv) \to \Rep(\dv)$ when $\pdv$ ranges through $\pdv \in \Compf(\dv)$.
\end{thm}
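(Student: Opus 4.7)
My plan is to apply Theorem~\ref{thm:springer_setup} to the collection of $\GL(\dv)$-equivariant proper maps $\mu_{\pdv} \colon \mathscr{Q}(\pdv) \to \Rep(\dv)$ with $\pdv \in \Compf(\dv)$, and then to identify the resulting motivic extension algebra $\E^\bullet$ with the KLR algebra $\mathrm{R}_\dv$.

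I first check the three hypotheses. Hypothesis (RG) is built into the definition of \emph{rather good} for $\mathrm Q$. For (FO), it suffices that the nilpotent locus of $\Rep(\dv)$, inside which all images of the $\mu_{\pdv}$ lie, has only finitely many $\GL(\dv)$-orbits: for a Dynkin quiver this is Gabriel's theorem (orbits biject with multisets of positive roots summing to $\dv$), and for type $\widetilde{\mathrm A}$ with cyclic orientation the nilpotent orbits biject with multisegments of weight $\dv$, again a finite set. For (PT), the fibers $\mu_{\pdv}^{-1}(\rho) = \Fl_{\mathrm{str}}(\rho, \pdv)$ admit affine pavings in both settings (Lusztig's construction for Dynkin type, and the analogous results for the cyclic case), so their motives are pure Tate.

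With the hypotheses verified, Theorem~\ref{thm:springer_setup} yields an equivalence
\[
    \DM^{\Spr}_\redm(\Rep(\dv)/\GL(\dv), \ring) \cong \mathrm{grPerf}\!\left(\bigoplus_{\pdv, \pdv' \in \Compf(\dv)} \CH^{\GL(\dv)}_{d_{\pdv'}-\bullet}(\mathscr{Q}(\pdv) \times_{\Rep(\dv)} \mathscr{Q}(\pdv'))_\ring\right),
\]
where $d_{\pdv'} = \dim \mathscr{Q}(\pdv')$. The remaining, and most delicate, step is to identify this Chow-theoretic algebra $\E^\bullet$ with $\mathrm{R}_\dv$. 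I would define a graded algebra map $\mathrm{R}_\dv \to \E^\bullet$ by sending the KLR idempotents $e(\mathbf i)$ to diagonal classes in the component indexed by the complete composition $\pdv_{\mathbf i}$ associated to $\mathbf i$, the polynomial generators $x_j e(\mathbf i)$ to equivariant Chern classes of the tautological line bundles on $\mathscr{Q}(\pdv_{\mathbf i})$, and the intertwiners $\tau_j e(\mathbf i)$ to the fundamental classes of the natural correspondences between $\mathscr{Q}(\pdv_{\mathbf i})$ and $\mathscr{Q}(s_j \cdot \pdv_{\mathbf i})$ inside the fibered product. The KLR relations then reduce to projection-formula and excess-intersection identities on these correspondences, which are purely geometric and valid over any $\ring$; this mirrors the rational computation carried out in \cite[\S 6]{motivic_springer}. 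The main obstacle is integrality of the resulting bijection: injectivity follows from the integral Hom-comparison results of \S 1.3--1.4 under the rather good hypothesis, while surjectivity is established by filtering $\Rep(\dv)$ by its $\GL(\dv)$-orbits and using the affine paving of $\Fl_{\mathrm{str}}(\rho, \pdv)$ to exhibit the associated graded of $\E^\bullet$ as a free $\ring$-module of the same rank as the standard KLR basis indexed by string diagrams.
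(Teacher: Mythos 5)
Your verification of the three hypotheses matches the paper's: (RG) is the definition of rather good, (FO) follows from Gabriel's theorem in Dynkin type and from the finiteness of nilpotent representations (forced by strict stability) in the cyclic $\widetilde{\mathrm A}$ case, and (PT) follows from affine pavings of the strict quiver flag varieties --- though for the latter the paper cites the uniform result of Zhou \cite{affine_paving_quivers}, which covers both cases at once, rather than ``Lusztig plus analogues.'' Where you genuinely diverge is the identification of the extension algebra with $\mathrm{R}_\dv$: the paper does not redo the convolution computation, but invokes the comparison of equivariant Chow groups with equivariant Borel--Moore homology \cite[Theorem 5.1]{Chow-weight-stacks} and then appeals to the computations of Varagnolo--Vasserot \cite[Theorem 3.6]{klr-algebras_computation}, whereas you propose to rebuild the isomorphism directly in equivariant Chow groups (idempotents to diagonal classes, polynomial generators to Chern classes, intertwiners to correspondence classes, relations via projection/excess-intersection formulas, surjectivity and rank count via the orbit filtration and affine pavings). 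This is exactly the content of the Varagnolo--Vasserot computation transported to Chow groups, so your route buys self-containedness and makes the integrality transparent, at the cost of a substantial computation that the paper outsources; conversely the paper's route is short but relies on the Chow/Borel--Moore comparison, which you never mention and which is the cleanest way to quote existing results. One small correction: injectivity of your map $\mathrm{R}_\dv \to \E^\bullet$ does not follow from the Hom-comparison results of \S 1.3--1.4 (those compare $\DM$ with $\DM_\redm$ and are already absorbed into Lemma \ref{lemma:iso_end}); it should instead come from your own surjectivity-plus-freeness argument (a surjection between graded free $\ring$-modules of the same finite graded rank is an isomorphism) or from faithfulness of the polynomial representation of $\mathrm{R}_\dv$.
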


\begin{proof}
    Since, by \cite[Theorem 5.1]{Chow-weight-stacks}, equivariant Chow groups and equivariant Borel-Moore homology coincide, it follows from the same computations as in \cite[Theorem 3.6]{klr-algebras_computation} that the extension algebra is isomorphic to $\mathrm{R}_\dv$.

    By Corollary \ref{cor:springer}, to prove the equivalence, it suffices to prove condition (PT) and (FO). When $Q$ is as in the statement, Zhou proved in \cite{affine_paving_quivers} that the (strict) partial flag varieties admit affine pavings and are, therefore, pure Tate.
    
    If $\mathrm Q$ is of Dynkin type, there are finitely many isomorphism classes of $\mathrm Q$-representations with a fixed dimension vector, and therefore $\Rep(\dv)$ has only finitely many $\GL(\dv)$-orbits. If $\mathrm Q$ is of type $\widetilde{\mathrm A}$ with cyclic orientation, it follows from (\ref{eq:strictly_stable}) that any representation in the image of $\mu_{\pdv}$ is nilpotent and it follows from \cite[Theorem 3.24]{lectures_schiffman} that there are just finitely many of them with a fixed dimension vector.
\end{proof}

Instead of considering just the complete compositions $\pdv \in \Compf(\dv)$, one may consider all compositions $\pdv \in \Comp(\dv)$. In this case, the extension algebra is isomorphic to (an integral version of) the \emph{Schur algebra} $\mathrm{A}_\dv$ defined by Stroppel and Webster in \cite{stroppel-webster} and we get the following result.

\begin{thm}
    Suppose $\mathrm Q$ is of Dynkin type or of type $\widetilde{\mathrm A}$ with cyclic orientation and suppose $\ring$ is rather good for $\mathrm Q$. Then there exists an equivalence of categories
    \[
        \DM^{\Spr}_\redm(\Rep(\dv)/\GL(\dv), \ring) \cong \mathrm{grPerf}(\mathrm{A}_\dv),
    \]
    where the Springer motives are taken with respect to the maps $\mu_{\pdv} \colon \mathscr{Q}(\pdv) \to \Rep(\dv)$ when $\pdv$ ranges through $\pdv \in \Comp(\dv)$.
\end{thm}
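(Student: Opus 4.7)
The strategy mirrors the preceding theorem on the KLR algebra, replacing the collection of complete compositions $\Compf(\dv)$ by all compositions $\Comp(\dv)$. First, I would verify the three hypotheses of Theorem \ref{thm:springer_setup}. Condition (RG) is precisely the rather goodness hypothesis on $\ring$, applied to the centralizers $\Ct_{\GL(\dv)}(\rho)$ for $\rho \in \Rep(\dv)$. Condition (FO) is unaffected by enlarging the indexing set because the image $\mu_{\pdv}(\mathscr{Q}(\pdv))$ still lies in $\Rep(\dv)$: in Dynkin type this has only finitely many $\GL(\dv)$-orbits by Gabriel's theorem, while in cyclic $\widetilde{\mathrm A}$-type the strict stability condition (\ref{eq:strictly_stable}) forces nilpotence, after which \cite[Theorem 3.24]{lectures_schiffman} applies.

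For condition (PT), I would invoke Zhou's result from \cite{affine_paving_quivers}, which produces affine pavings of the strict partial quiver flag varieties $\Fl_{\mathrm{str}}(\rho, \pdv)$; such pavings imply pure Tateness of the fibers of $\mu_{\pdv}$. Alternatively, every composition $\pdv \in \Comp(\dv)$ admits a refinement to a complete composition $\pdv' \in \Compf(\dv)$, and the natural projection $\mathscr{Q}(\pdv') \to \mathscr{Q}(\pdv)$ is a Zariski-locally trivial fibration with fibers products of ordinary (non-quiver) partial flag varieties, which reduces (PT) to the complete case already used in the previous theorem.

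Once the three hypotheses are verified, Theorem \ref{thm:springer_setup} yields
\[
\DM^{\Spr}_\redm(\Rep(\dv)/\GL(\dv), \ring) \cong \mathrm{grPerf}\left(\bigoplus_{\pdv, \pdv' \in \Comp(\dv)} \CH^{\GL(\dv)}_{d_{\pdv'} - \bullet}\left(\mathscr{Q}(\pdv) \times_{\Rep(\dv)} \mathscr{Q}(\pdv')\right)_\ring\right),
\]
and the remaining task is to identify the convolution algebra on the right with $\mathrm{A}_\dv$. For this I would invoke \cite[Theorem 5.1]{Chow-weight-stacks} to compare equivariant Chow groups with equivariant Borel--Moore homology, and then extend the KLR-case computation in \cite[Theorem 3.6]{klr-algebras_computation} from $\Compf(\dv)$ to $\Comp(\dv)$, matching the result with the geometric presentation of the Schur algebra given by Stroppel and Webster in \cite{stroppel-webster}.

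The main obstacle I expect is this last identification: one must check that the convolution product on the Chow side reproduces the merge-and-split multiplication defining $\mathrm{A}_\dv$. This is a diagram chase with pullback and pushforward along the refinement maps between the varieties $\mathscr{Q}(\pdv)$, and the argument should go through verbatim as in the Borel--Moore case once the identification of the underlying graded modules has been established. No further hypotheses on $\ring$ beyond rather goodness are needed, since \cite{Chow-weight-stacks} and the Stroppel--Webster construction both work integrally.
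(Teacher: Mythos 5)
Your proposal is correct and follows essentially the same route as the paper: the paper's proof simply observes that the argument for the KLR case never used completeness of the compositions, so verifying (PT) via Zhou's affine pavings, (FO) via Gabriel's theorem respectively nilpotency in cyclic type $\widetilde{\mathrm A}$, and identifying the extension algebra with the Stroppel--Webster quiver Schur algebra through the Chow/Borel--Moore comparison goes through verbatim for all of $\Comp(\dv)$. Your alternative reduction of (PT) to the complete case via the flag-bundle projections $\mathscr{Q}(\pdv') \to \mathscr{Q}(\pdv)$ is a valid extra observation but not needed, since Zhou's result already covers the strict partial quiver flag varieties.
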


\begin{proof}
    In the proof of the previous theorem, we did not use the fact $\pdv$ was complete. In particular, the same proof applies here.
\end{proof}

\appendix

\section{Weight structures}

We are going to recall some fundamental results on the theory of weight structures, which allows us to both define and use weight structures.

\begin{defn}
    A weight structure on a stable $\infty$-category $\C$ is a pair of retract-closed full subcategories $(\C_{w \geq 0}, \C_{w\leq 0})$ such that, if we call $\C_{w \geq n} = \Sigma^n\C_{w \geq 0}$ and $\C_{w\leq n} = \Sigma^{n} \C_{w \leq n}$ for $n \in \ZZ$, then:
    \begin{enumerate}[(a)]
        \item $\C_{w \geq 1} \subset \C_{w\geq 0}$ and $\C_{w \leq -1} \subset \C_{w \leq 0}$;
        \item if $x \in \C_{w\leq 0}$ and $y \in \C_{w\geq 1}$, then
        \[
            \Hom_\C(x,y) = 0;
        \]
        \item for any $x \in \C$, there exists a cofiber sequence
        \[
            x_{\leq 0} \to x \to x_{\geq 1},
        \]
        where $x_{\leq 0} \in \C_{w\leq 0}$ and $x_{\geq 1} \in \C_{w\geq 1}$.
    \end{enumerate}
\end{defn}

As in the case of $t$-structures, one can define the notion of right bounded, left bounded and bounded weight structures. Since it is the only definition we are going to use, we recall the definition of a bounded weight structure.

\begin{defn}A weight structure $(\C_{w \geq 0}, \C_{w\leq 0})$ on a stable $\infty$-category $\C$ is said to be \emph{bounded} if the inclusion
    \[
        \C^b = \bigcup_{n} \left( \C_{w \geq -n} \cap \C_{w \leq n} \right) \to \C
    \]
    is an equivalence.
\end{defn}

\begin{example}\label{ex:chain_weight}
    For any additive idempotent-complete category $\A$, the stable $\infty$-category of chain complexes $\operatorname{Ch}^b(\A) \coloneqq \mathrm{N}_{\mathrm{dg}}(\K^b(\A))$ is endowed with a weight structure given by
    \[
    \begin{split}
        &\operatorname{Ch}^b(\A)_{w\geq 0} \coloneqq \{ X \in \operatorname{Ch}^b(\A) \mid X_i = 0 \text{ if } i<0\}; \\
        &\operatorname{Ch}^b(\A)_{w\leq 0} \coloneqq \{ X \in \operatorname{Ch}^b(\A) \mid X_i = 0 \text{ if } i>0\}.
    \end{split}
    \]
\end{example}

\begin{defn}
    Let $\C$ be a stable $\infty$-category equipped with a weight structure $(\C_{w \leq 0}, \C_{w \geq 0})$. We define the heart of the weight structure as:
    \[
        \C^{\heartsuit_w} \coloneqq \C_{w \leq 0} \cap \C_{w \geq 0}.
    \]
    If $\mathscr{D}$ is another stable $\infty$-category equipped with a weight structure, then a weigth exact functor $F \colon \C \to \mathscr{D}$ between stable $\infty$-categories is an exact functor which sends the heart of $\C$ to the heart of $\mathscr{D}$.
\end{defn}

The heart of a weight structure is always an additive idempotent-complete $\infty$-category, but rarely an Abelian category\footnote{One can already see this being the case for Example \ref{ex:chain_weight}}. Nevertheless, they are fundamental to the theory, since a (bounded) weight structure is determined by its heart $\C^{\heartsuit_w}$. 

\begin{defn}
    Let $\C$ be a stable $\infty$-category and $\mathscr{N}$ be a collection of objects of $\C$. We say $\mathscr{N}$ is negative (resp. positive, resp. tilting) if for every $x, \, y \in \mathscr{N}$
    \[
        \Hom_\C(x,\Sigma^n y) = 0
    \]
    for every $n > 0$ (resp. $n < 0$, resp. $n \neq 0$).
\end{defn}

The following lemma gives a sufficient (and necessary) condition for a subcategory to determine the heart of a bounded weight structure. It is due to Bondarko in \cite[Theorem 4.3.2.II]{bondarko_original}, and the statement in the $\infty$-categorical language is given in \cite[Remark 2.2.6]{weight_infty}.

\begin{lemma}\label{thm:weight_conditions}
    Let $\C$ be a stable $\infty$-category and suppose we are given a negative collection of objects $\mathscr{N}$ which generates $\C$ under finite limits, finite colimits and retracts. Then the categories
    \[
        \C_{w \leq 0} = \{ \textrm{retracts of finite colimits of } \mathscr{N}\}
    \]
    and
    \[
        \C_{w \geq 0} = \{ \textrm{retracts of finite limits of } \mathscr{N} \}
    \]
    defines a bounded weight structure on $\C$ whose heart is the smallest additive subcategory closed under retracts containing $\mathscr{N}$. 
\end{lemma}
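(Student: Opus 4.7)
The plan is to follow Bondarko's original argument, adapted to the stable $\infty$-categorical setting. First, I would pin down the precise meaning of the two classes: $\C_{w \leq 0}$ should be read as the smallest retract-closed subcategory of $\C$ containing $\mathscr{N}[n]$ for $n \leq 0$ and closed under extensions (i.e., under taking the middle term of a cofiber sequence whose outer terms lie in the class), and dually $\C_{w \geq 0}$ with $n \geq 0$. With this reading, the retract-closure and the shift inclusions $\C_{w \leq -1} \subseteq \C_{w \leq 0}$ and $\C_{w \geq 1} \subseteq \C_{w \geq 0}$ required by axiom~(a) are tautological.

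For the orthogonality axiom~(b), I would argue by double induction on the extension length needed to build $x \in \C_{w \leq 0}$ and $y \in \C_{w \geq 1}$. The base case, where $x = M[n]$ and $y = N[m]$ with $M, N \in \mathscr{N}$, $n \leq 0$ and $m \geq 1$, is precisely the negativity hypothesis on $\mathscr{N}$. The inductive step uses that $\Hom_\C(-, y)$ and $\Hom_\C(x, -)$ convert cofiber sequences into long exact sequences, and orthogonality is inherited by retracts.

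The main work is axiom~(c), the construction of weight decompositions. I would proceed by induction on the construction length of $x \in \C$. For $x$ a shifted generator the trivial decomposition works. For the inductive step, suppose $x$ fits into a cofiber sequence $a \to x \to b$ with weight decompositions $a_{\leq 0} \to a \to a_{\geq 1}$ and $b_{\leq 0} \to b \to b_{\geq 1}$ already at hand. The vanishing $\Hom_\C(a_{\leq 0}, b_{\geq 1}[-1]) = 0$ from axiom~(b) forces the candidate composite $a_{\leq 0} \to a \to x \to b \to b_{\geq 1}$ to be null, so an octahedral (or $3 \times 3$) argument in $\C$ assembles a cofiber sequence $x_{\leq 0} \to x \to x_{\geq 1}$ with $x_{\leq 0} \in \C_{w \leq 0}$ and $x_{\geq 1} \in \C_{w \geq 1}$; passing to an arbitrary retract of $x$ uses that both classes are retract-closed and that idempotents split in a stable $\infty$-category.

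Boundedness is immediate since every generating sequence has finite length. The inclusion $\mathscr{N} \subseteq \C^{\heartsuit_w}$ combined with retract-closure shows that the additive retract-closure $\langle \mathscr{N} \rangle_{\add}$ lies in $\C^{\heartsuit_w}$; conversely, any $x \in \C^{\heartsuit_w}$ admits both a ``right'' and a ``left'' extension presentation, and one checks by induction, using orthogonality to kill the connecting maps, that $x$ is already a retract of a finite direct sum of objects of $\mathscr{N}$. The principal obstacle is the octahedral gluing step in axiom~(c): ensuring that the candidate truncations produced at the two ends of a cofiber sequence fit together compatibly, which is exactly where the orthogonality established in~(b) is indispensable.
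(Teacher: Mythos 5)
The paper does not actually prove this lemma: it is quoted from Bondarko (Theorem 4.3.2.II of the cited paper) with the $\infty$-categorical formulation attributed to Elmanto--Sosnilo, so you are attempting more than the text does, and your overall strategy (extension-closure of shifted generators, orthogonality by d\'evissage, gluing weight decompositions along cofiber sequences, identification of the heart) is indeed the standard Bondarko argument. Your re-reading of the two classes as the retract- and extension-closures of $\mathscr{N}[n]$ for $n\le 0$, resp.\ $n\ge 0$, is also the right one given the paper's shift conventions. However, two steps as written do not work. In the gluing step, the composite $a_{\le 0}\to a\to x\to b\to b_{\ge 1}$ is \emph{automatically} null, because it contains the two consecutive maps $a\to x\to b$ of the cofiber sequence; its vanishing carries no information, and the group you invoke, $\Hom_\C(a_{\le 0}, b_{\ge 1}[-1])$, is not killed by axiom (b) since $b_{\ge 1}[-1]$ only lies in $\C_{w\ge 0}$ (identity maps of heart objects already give nonzero elements of such groups). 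The correct input is the vanishing of $\Hom_\C(b_{\le 0}[-1], a_{\ge 1})$ (here $b_{\le 0}[-1]\in\C_{w\le -1}$), which shows that the composite $b_{\le 0}[-1]\to b[-1]\to a\to a_{\ge 1}$ is null, hence that $b_{\le 0}[-1]\to a$ lifts through $a_{\le 0}\to a$; only then does the octahedral/$3\times 3$ argument produce $x_{\le 0}$ as an extension of $b_{\le 0}$ by $a_{\le 0}$ and $x_{\ge 1}$ as an extension of $b_{\ge 1}$ by $a_{\ge 1}$. This is fixable, but the fix is the actual content of the step.

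The more serious gap is the retract step. Since $\C$ is generated under finite (co)limits \emph{and retracts}, a general object $x$ is only a retract of an object $y$ of the extension closure, and retract-closedness of the classes $\C_{w\le 0}$, $\C_{w\ge 1}$ does not by itself give $x$ a weight decomposition: a cofiber sequence for $y$ induces none for $x$, and an idempotent of $y$ need not be compatible with a chosen decomposition $y_{\le 0}\to y\to y_{\ge 1}$ -- it does induce maps on $y_{\le 0}$ and $y_{\ge 1}$ by orthogonality, but only up to an ambiguity in $\Hom_\C(y_{\le 0}, y_{\ge 1}[-1])$, which negativity does not kill, so the induced endomorphisms need not be idempotent. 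This is precisely the Karoubization problem, which Bondarko treats by a separate, genuinely nontrivial argument (extension of a bounded weight structure to the idempotent completion); your one-line appeal to ``idempotents split in a stable $\infty$-category'' does not address it. A complete proof must either reproduce that argument (e.g.\ by induction on the weight range, correcting the idempotent at each stage) or cite it, as the paper implicitly does.
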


If the collection is tilting, then the heart $\C^{\heartsuit_w}$ is a classical category (in other words, $\C^{\heartsuit_w} \cong \h \C^{\heartsuit_w}$), indeed by definition there is no higher $\Hom$ between the elements of $\N$ and since $\N$ generates the heart under retracts, there is no higher $\Hom$ between elements in the heart. We are going to see that the only such examples are the ones from Example \ref{ex:chain_weight}. In order to prove this, we define the weight complex functor.

\begin{prop}[\cite{sosnilo_heart}, Proposition 3.3] \label{thm:weight_yoneda_2}
Let $\C$ be a stable $\infty$-category equipped with a bounded weight structure. Then:
\begin{enumerate}[(1)]
    \item The functor
    \[
        \C \to \Fun^{\add}((\C^{\heartsuit_w})^{\op}, \Sp)
    \] 
    given by the restriction of the Yoneda functor is exact and fully faithful.
    \item Let $\C'$ be another stable $\infty$-category equipped with a weight structure. Then the restriction functor $\Fun^{w\textrm{-ex}}(\C, \C') \to \Fun^{\add}(\C^{\heartsuit_w}, \C'^{\heartsuit_w})$ is an equivalence of $\infty$-categories.
\end{enumerate}
\end{prop}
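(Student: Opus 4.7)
The plan is to reduce both assertions to the heart $\C^{\heartsuit_w}$ via a dévissage along the weight filtration. The key input is that boundedness of the weight structure, combined with iterated weight truncation triangles $x_{w\leq 0} \to x \to x_{w\geq 1}$, implies that the smallest thick subcategory of $\C$ containing $\C^{\heartsuit_w}$ is all of $\C$. Throughout, I will write $h_z \coloneqq \Map_\C(-, z)|_{(\C^{\heartsuit_w})^{\op}}$ for the restricted representable.

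For part (1), exactness of the restricted Yoneda functor is automatic: the full Yoneda $\C \to \Fun(\C^{\op}, \Sp)$ preserves limits and colimits pointwise, and precomposition with the inclusion $(\C^{\heartsuit_w})^{\op} \hookrightarrow \C^{\op}$ is exact. For fully faithfulness, I would fix $y \in \C$ and consider the full subcategory $\mathscr{S}_y \subseteq \C$ of objects $x$ for which the natural comparison map
\[
\alpha_{x,y} \colon \Map_\C(x,y) \to \Map_{\Fun^{\add}((\C^{\heartsuit_w})^{\op}, \Sp)}(h_x, h_y)
\]
is an equivalence. Naturality of $\alpha$ in $x$ together with exactness of both sides as functors of $x$ makes $\mathscr{S}_y$ closed under finite limits, finite colimits, and retracts, and the $\infty$-categorical Yoneda lemma applied to the restricted representables gives $\C^{\heartsuit_w} \subseteq \mathscr{S}_y$; boundedness then forces $\mathscr{S}_y = \C$.

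For part (2), faithfulness of the restriction functor $R \colon \Fun^{w\textrm{-ex}}(\C, \C') \to \Fun^{\add}(\C^{\heartsuit_w}, \C'^{\heartsuit_w})$ follows from the same dévissage: a weight-exact functor commutes with weight truncation triangles, so its values on all of $\C$ are determined inductively by its values on the heart, and the analogous argument applied to mapping spaces shows that natural transformations between two weight-exact functors are determined by their restrictions to $\C^{\heartsuit_w}$. The hard part will be essential surjectivity: given an additive $\phi \colon \C^{\heartsuit_w} \to \C'^{\heartsuit_w}$, one must construct a weight-exact extension $\tilde{\phi} \colon \C \to \C'$. A naive inductive construction along the weight filtration is only weakly functorial, because weight decompositions themselves are not canonical; this is the genuine $\infty$-categorical obstacle at the heart of Sosnilo's paper. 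My plan is to use part (1) to embed $\C' \hookrightarrow \Fun^{\add}((\C'^{\heartsuit_w})^{\op}, \Sp)$, form the left Kan extension of the composite $\C^{\heartsuit_w} \xrightarrow{\phi} \C'^{\heartsuit_w} \hookrightarrow \C'$ along $\C^{\heartsuit_w} \hookrightarrow \C$ inside this larger functor $\infty$-category, and then verify by induction on the weight range that the resulting functor factors through $\C'$ and is weight-exact.
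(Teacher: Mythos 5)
Part (1) of your proposal has the right shape (the paper itself gives no argument here, it simply cites Sosnilo, so the only question is whether your sketch is sound), but the base case is not literally ``the $\infty$-categorical Yoneda lemma''. For $x \in \C^{\heartsuit_w}$ the object $h_x$ is a \emph{spectrum-valued} additive presheaf, and the formula $\Map_{\Fun^{\add}((\C^{\heartsuit_w})^{\op},\Sp)}(h_x,F)\simeq \Omega^\infty F(x)$ is the additive/spectral refinement of Yoneda: it holds because $h_x$ agrees with the image of the space-valued representable under stabilization of $\Fun^{\times}((\C^{\heartsuit_w})^{\op},\mathcal{S})$, and this uses that $\operatorname{map}_\C(z,x)$ is \emph{connective} for $z,x$ in the heart --- which is exactly axiom (b) of the weight structure. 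This is fixable, but as written you never say where the weight axioms enter the base case, and without connectivity the formula you are invoking is false.

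The genuine gap is essential surjectivity in part (2). Left Kan extension along the inclusion $\C^{\heartsuit_w}\hookrightarrow \C$ is the wrong construction: it is not exact, hence not weight-exact, and no induction on weight ranges can repair it, because the heart does not see objects of negative weights from the left. Concretely, take $\C=\C'=\Perf(\ZZ)$ with heart the finitely generated projectives and $\phi=\id$. For every projective $P$ one has $\pi_i\Map_{\Perf(\ZZ)}(P,\ZZ[-1])=\operatorname{Ext}^{1+i}_\ZZ(P,\ZZ)=0$, so the comma category $(\C^{\heartsuit_w})_{/\ZZ[-1]}$ is equivalent to $\C^{\heartsuit_w}$, which has the terminal object $0$; hence your Kan extension takes the value $0$ on $\ZZ[-1]$, whereas any exact extension of $\id$ must send $\ZZ[-1]$ to $\ZZ[-1]\neq 0$. (Already $\phi=\id$ is not recovered, so the ``verification'' step cannot succeed.) The construction that works is to Kan-extend along the \emph{Yoneda embedding} $\C^{\heartsuit_w}\to \Fun^{\add}((\C^{\heartsuit_w})^{\op},\Sp)$, i.e.\ use the universal property of the stabilized additive presheaf category on the \emph{source} heart, then apply part (1) to $\C$ itself to identify $\C$ with the thick subcategory generated by the representables; the induced colimit-preserving functor carries representables to the $\phi$-images of representables and therefore restricts to a weight-exact functor $\C\to\C'$. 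Relatedly, your treatment of full faithfulness of the restriction functor (``the analogous argument applied to mapping spaces'') is not yet an argument: mapping spaces in $\Fun^{w\textrm{-ex}}(\C,\C')$ are ends over $\C$, and comparing them with ends over the heart is not a formal dévissage; it drops out of the same presheaf identification, which is the route taken in the cited reference.
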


\begin{defn}
    Let $\C$ be a stable $\infty$-category equipped with a bounded weight structure. The \emph{weight complex functor} is the weight exact functor $t \colon \C \to \mathrm{Ch}^b(\h\C^{\heartsuit_w})$ being sent to the additive functor $\C^{\heartsuit_w} \to \h\C^{\heartsuit_w}$ under the equivalence of $\infty$-categories given by Proposition \ref{thm:weight_yoneda_2}(2).
\end{defn}

As we are going to see, this construction is both symmetric monoidal, when the weight structure is compatible with the symmetric monoidal structure, and functorial with respect to weight exact functors. 

\begin{prop}[\cite{sosnilo_heart}, Corollary 3.5]\label{prop:funct_weight}
    Let $\C$ and $\C'$ be two stable $\infty$-categories equipped with bounded weight structures and $F \colon \C \to \C'$ be a weight exact functor between them. The following diagram commutes.
    \[
    \begin{tikzcd}[column sep = 40pt]
    \C \ar["t_\C"']{d} \ar["F"]{r} & \C' \ar["t_{\C'}"]{d} \\
    \mathrm{Ch}^b(\h\C^{\heartsuit_w}) \ar["\mathrm{Ch}^b(\h F^{\heartsuit_w})"']{r} & \mathrm{Ch}^b(\h\C'^{\heartsuit_w})
    \end{tikzcd}
    \]
\end{prop}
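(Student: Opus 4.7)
The plan is to invoke the universal property of the weight complex functor supplied by Proposition \ref{thm:weight_yoneda_2}(2): a weight exact functor out of a stable $\infty$-category with bounded weight structure is determined, up to a contractible space of choices, by its restriction to the heart. Once this is set up, the commutativity of the square reduces to an essentially tautological identification on hearts.

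First I would verify that both compositions $t_{\C'} \circ F$ and $\mathrm{Ch}^b(\h F^{\heartsuit_w}) \circ t_\C$ are weight exact functors from $\C$ to $\mathrm{Ch}^b(\h \C'^{\heartsuit_w})$. For the former, $F$ is weight exact by hypothesis and $t_{\C'}$ is weight exact by construction. For the latter, $t_\C$ is weight exact by construction, and $\mathrm{Ch}^b(\h F^{\heartsuit_w})$ is weight exact because an additive functor between additive idempotent-complete categories induces an exact functor between the associated bounded chain complex categories sending complexes concentrated in degree zero to complexes concentrated in degree zero, hence preserving the weight structure described in Example \ref{ex:chain_weight}.

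Next, by Proposition \ref{thm:weight_yoneda_2}(2) applied to $\C$ and $\C' = \mathrm{Ch}^b(\h \C'^{\heartsuit_w})$, the restriction functor
\[
\Fun^{w\text{-ex}}(\C, \mathrm{Ch}^b(\h \C'^{\heartsuit_w})) \longrightarrow \Fun^{\add}(\C^{\heartsuit_w}, \mathrm{Ch}^b(\h \C'^{\heartsuit_w})^{\heartsuit_w})
\]
is an equivalence of $\infty$-categories, and the target heart identifies with $\h \C'^{\heartsuit_w}$ (objects concentrated in chain degree zero). It therefore suffices to exhibit a natural isomorphism between the two compositions after restricting to $\C^{\heartsuit_w}$.

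The restriction of $t_{\C'} \circ F$ to $\C^{\heartsuit_w}$ equals the composition
\[
\C^{\heartsuit_w} \xrightarrow{F|_{\C^{\heartsuit_w}}} \C'^{\heartsuit_w} \longrightarrow \h \C'^{\heartsuit_w},
\]
by the defining property of $t_{\C'}$. On the other hand, the restriction of $\mathrm{Ch}^b(\h F^{\heartsuit_w}) \circ t_\C$ to $\C^{\heartsuit_w}$ is the composition
\[
\C^{\heartsuit_w} \longrightarrow \h \C^{\heartsuit_w} \xrightarrow{\h F^{\heartsuit_w}} \h \C'^{\heartsuit_w},
\]
by the defining property of $t_\C$. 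Both of these functors coincide with the canonical additive functor obtained from $F|_{\C^{\heartsuit_w}}$ followed by passage to the homotopy category. Applying the equivalence quoted above then produces a canonical natural isomorphism between the two weight exact functors, making the square commute. The only subtlety is bookkeeping: one must ensure all functors involved are genuinely weight exact (not merely exact) so that Proposition \ref{thm:weight_yoneda_2}(2) applies; this was dispensed with in the first step.
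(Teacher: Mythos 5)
Your proposal is correct. Note, though, that the paper does not prove this statement at all: it is quoted directly from the cited source (Sosnilo, Corollary 3.5), so there is no internal proof to compare against. Your derivation — checking that both composites $t_{\C'}\circ F$ and $\mathrm{Ch}^b(\h F^{\heartsuit_w})\circ t_\C$ are weight exact (which, with the paper's definition of weight exactness as ``exact and heart-preserving,'' reduces for the lower horizontal arrow to the observation that it preserves complexes concentrated in degree zero, cf.\ Example \ref{ex:chain_weight}), then applying the equivalence of Proposition \ref{thm:weight_yoneda_2}(2) with target $\mathrm{Ch}^b(\h\C'^{\heartsuit_w})$ and comparing restrictions to $\C^{\heartsuit_w}$ — is exactly the formal argument that underlies the citation, since the weight complex functor is itself defined via that universal property. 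The only points worth flagging are bookkeeping ones you essentially handle: the identification of $\mathrm{Ch}^b(\h\C'^{\heartsuit_w})^{\heartsuit_w}$ with $\h\C'^{\heartsuit_w}$ uses that hearts are idempotent-complete (so the retract-closure adds nothing), and since that heart is an ordinary $1$-category, the canonical identification of the two restricted functors $\C^{\heartsuit_w}\to\h\C'^{\heartsuit_w}$ requires no higher coherence data, so the square indeed commutes up to a canonical natural equivalence.
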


\begin{defn}
Let $\C^\otimes$ be a stably symmetric monoidal $\infty$-category. We say a weight structure $(\C_{w \geq 0}, \C_{w\leq 0})$ is \emph{compatible with the symmetric monoidal structure} if $\C_{w\geq 0}$ and $\C_{w\leq 0}$ are stable under tensor product.
\end{defn}

\begin{lemma}[\cite{ko_weight_monoidal}, Corollary 4.5]
    Let $\C^\otimes$ be a stably symmetric monoidal $\infty$-category equipped with a bounded weight structure compatible with the symmetric monoidal structure. Then the weight complex functor admits a symmetric monoidal enhancement.
\end{lemma}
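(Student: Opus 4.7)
The plan is to lift Proposition~\ref{thm:weight_yoneda_2}(2) to a symmetric monoidal equivalence, and then to apply it to the canonical symmetric monoidal projection from the heart to its homotopy $1$-category.

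First I would verify that all the relevant categories acquire compatible symmetric monoidal structures. The compatibility assumption forces $\C^{\heartsuit_w} = \C_{w\leq 0} \cap \C_{w \geq 0}$ to be closed under $\otimes$, so the heart inherits a symmetric monoidal additive $\infty$-category structure; passing to the homotopy $1$-category yields a symmetric monoidal additive $1$-category $\h\C^{\heartsuit_w}$; and the usual Koszul-signed total tensor product equips $\mathrm{Ch}^b(\h\C^{\heartsuit_w})$ with a symmetric monoidal structure that is compatible with its weight structure from Example~\ref{ex:chain_weight}. The canonical projection $\C^{\heartsuit_w} \to \h\C^{\heartsuit_w}$ is symmetric monoidal by construction.

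Next I would enhance the universal property of Proposition~\ref{thm:weight_yoneda_2}(2) to a symmetric monoidal equivalence
\[
    \Fun^{w\text{-ex}, \otimes}(\C, \mathscr{D}) \xrightarrow{\simeq} \Fun^{\mathrm{add}, \otimes}(\C^{\heartsuit_w}, \mathscr{D}^{\heartsuit_w})
\]
for any pair of stable symmetric monoidal $\infty$-categories equipped with bounded tensor-compatible weight structures. A clean approach is to work $\infty$-operadically: the target $\Fun^{\mathrm{add}}((\C^{\heartsuit_w})^{\mathrm{op}}, \Sp)$ of the embedding of Proposition~\ref{thm:weight_yoneda_2}(1) carries a Day convolution symmetric monoidal structure, the Yoneda embedding of Proposition~\ref{thm:weight_yoneda_2}(1) promotes to a symmetric monoidal functor in this structure, and the essential image of $\C$ is closed under Day convolution because the weight structure on $\C$ is tensor-compatible. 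The non-monoidal universal property then upgrades formally. Feeding the symmetric monoidal additive projection $\C^{\heartsuit_w} \to \h\C^{\heartsuit_w}$ into the right-hand side of the equivalence yields a symmetric monoidal weight exact functor $\C \to \mathrm{Ch}^b(\h\C^{\heartsuit_w})$ whose underlying weight exact functor is, by uniqueness, the weight complex functor $t$.

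The main obstacle is the symmetric monoidal enhancement of Proposition~\ref{thm:weight_yoneda_2}. This is genuinely subtle because the weight decompositions $x_{\leq 0} \to x \to x_{\geq 1}$ are functorial only up to non-canonical homotopy, so organizing coherent $n$-fold tensor data by hand would be quite delicate. The Day-convolution route is attractive precisely because it delegates all coherence to the symmetric monoidal Yoneda embedding, reducing the essential content to a single closure statement: that for tensor-compatible weight structures, the essential image of $\C$ inside the Day-convolution-monoidal functor category is closed under $\otimes$.
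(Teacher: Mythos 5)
This lemma is not proved in the paper at all: it is imported verbatim from Aoki \cite{ko_weight_monoidal} (Corollary 4.5) and used as a black box, so the relevant benchmark for your argument is Aoki's paper rather than anything in this one. Your outline has the right shape — upgrade the universal property of Proposition \ref{thm:weight_yoneda_2}(2) to a symmetric monoidal statement and then feed in the symmetric monoidal projection $\C^{\heartsuit_w} \to \h\C^{\heartsuit_w}$, with $\mathrm{Ch}^b(\h\C^{\heartsuit_w})$ carrying the Koszul-signed tensor product compatible with the weight structure of Example \ref{ex:chain_weight} — but as a proof it stops exactly where the content begins, and two of the steps you describe as formal are not.

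First, the functor $\C \to \Fun^{\add}((\C^{\heartsuit_w})^{\op}, \Sp)$ of Proposition \ref{thm:weight_yoneda_2}(1) is a priori only \emph{lax} symmetric monoidal for Day convolution: it is the symmetric monoidal spectral Yoneda embedding followed by restriction along $\C^{\heartsuit_w} \hookrightarrow \C$, and restriction is merely lax monoidal. For objects of $\C$ that do not lie in the heart the restricted presheaves are not representable, so the standard fact that Day convolution of representables is representable does not apply directly; to get strong monoidality on the image one has to check the comparison maps are equivalences, e.g.\ by reducing to shifted heart objects (where tensor-compatibility makes $\C^{\heartsuit_w}$ a monoidal subcategory and the representable computation applies) and then inducting over weight decompositions using exactness of Day convolution in each variable, together with a verification that the additivization is a monoidal localization. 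None of this is fatal, but it is an argument, not a formality, and it is where the hypothesis of compatibility actually enters. Second, and more seriously, the assertion that ``the non-monoidal universal property then upgrades formally'' is unjustified: even granting the monoidal embedding, producing a symmetric monoidal equivalence $\Fun^{w\text{-ex},\otimes}(\C,\mathscr{D}) \simeq \Fun^{\add,\otimes}(\C^{\heartsuit_w},\mathscr{D}^{\heartsuit_w})$, or merely a symmetric monoidal functor $\C \to \mathrm{Ch}^b(\h\C^{\heartsuit_w})$ whose underlying functor is $t$, requires a coherent comparison between the Day convolution structure on presheaves on the heart and the monoidal structure on bounded chain complexes; assembling that coherence is precisely the theorem Aoki proves, and your sketch defers rather than supplies it. So the proposal is a plausible plan whose hardest points are honestly flagged but unresolved; for the purposes of this paper the correct move is the one the paper makes, namely citing \cite{ko_weight_monoidal}.
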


\begin{prop} \label{thm:tilting_equivalence_2}
    Let $\C$ be a stable $\infty$-category endowed with a bounded weight structure whose heart $\C^{\heartsuit_w}$ is tilting. Then the weight complex functor is an equivalence:
    \[
        \C \cong \mathrm{Ch}^b(\C^{\heartsuit_w}).
    \]
    If $\C^\otimes$ is a stably symmetric monoidal $\infty$-category and the bounded weight structure is compatible with the symmetric monoidal structure, then this equivalence is symmetric monoidal.
\end{prop}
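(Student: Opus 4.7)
The plan is to construct a weight exact inverse to $t$ via the uniqueness statement in Proposition \ref{thm:weight_yoneda_2}(2), and then to invoke the cited lemma of Ko to upgrade the resulting equivalence to a symmetric monoidal one.

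First I will set up the target. Because $\C^{\heartsuit_w}$ is tilting, it is already a classical $1$-category (equivalent to its own homotopy category), so $\mathrm{Ch}^b(\C^{\heartsuit_w})$ is well-defined and carries the bounded weight structure of Example \ref{ex:chain_weight}. A direct computation of mapping spectra in the dg-nerve $\mathrm{N}_{\mathrm{dg}}(\K^b(\C^{\heartsuit_w}))$ shows that complexes concentrated in degree $0$ have discrete mapping spaces, so there is a natural identification $\mathrm{Ch}^b(\C^{\heartsuit_w})^{\heartsuit_w} \simeq \C^{\heartsuit_w}$.

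Next I will apply Proposition \ref{thm:weight_yoneda_2}(2) in the opposite direction: the inverse identity of hearts lifts uniquely to a weight exact functor $s \colon \mathrm{Ch}^b(\C^{\heartsuit_w}) \to \C$. The compositions $s \circ t$ and $t \circ s$ are weight exact and restrict to the identity on their respective hearts, so by the uniqueness part of the same proposition they must be equivalent to the identity functors. This yields the desired equivalence. For the monoidal claim, when the weight structure is compatible with the symmetric monoidal structure, the lemma of Ko cited just above the statement provides a symmetric monoidal enhancement of $t$; since $t$ is already an equivalence of underlying $\infty$-categories, this enhancement is automatically a symmetric monoidal equivalence.

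The main technical point is the identification $\mathrm{Ch}^b(\C^{\heartsuit_w})^{\heartsuit_w} \simeq \C^{\heartsuit_w}$: in the dg-nerve, mapping spectra are computed as totalizations of $\Hom$-complexes, so the full tilting hypothesis (vanishing of higher $\Hom$ in \emph{both} directions between heart objects) is essential to ensure that the heart on the chain-complex side is discrete and naturally identified with $\C^{\heartsuit_w}$. Without it, one could build $s$ from negativity alone but would not control $t \circ s$ on the heart, and the symmetric application of Proposition \ref{thm:weight_yoneda_2}(2) would break down.
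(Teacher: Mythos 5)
Your proof is correct and takes essentially the same route as the paper: both arguments rest on Proposition \ref{thm:weight_yoneda_2}(2) together with the tilting identification $\C^{\heartsuit_w} \simeq \h\C^{\heartsuit_w}$ of the hearts, plus the cited lemma of Aoki for the symmetric monoidal statement. You simply spell out the construction of the weight exact inverse and the uniqueness argument that the paper's terse proof leaves implicit.
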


\begin{proof}
    By Proposition \ref{thm:weight_yoneda_2}(2), one has
    \[
        \Fun^{w\textrm{-ex}}(\C, \Ch^b(\h \C^{\heartsuit_w})) \to \Fun^{\add}(\C^{\heartsuit_w}, \h \C^{\heartsuit_w}),
    \]
    and, since the heart is tilting, $\C^{\heartsuit_w} \cong \h \C^{\heartsuit_w}$, which concludes our proof.
\end{proof}

Observe that if $\C^{\heartsuit_w}$ is tilting, then $\C^{\heartsuit_w} = \h \C^{\heartsuit_w}$ and we recover Example \ref{ex:chain_weight}. Moreover, to verify that the heart $\C^{\heartsuit_w}$ is tilting, it suffices to verify that a generating collection $\mathscr{N}$ is tilting.

\begin{cor}[\cite{motivic_springer}, Corollary 2.16] \label{thm:tilting_algebra_2}
    Let $\C$ be a stable $\infty$-category admiting an auto-equivalence $\langle 1 \rangle$ and $x \in \C$ an object. If the collection $\mathscr{N} = \{x \langle n \rangle \mid n \in \ZZ\}$ is tilting and generates $\C$ under finite limits, finite colimits and retracts, then 
    \[
        \C \cong \mathrm{grPerf}(\End_\C^\bullet(x)),
    \]
    where $\End_\C^\bullet(x) \coloneqq \bigoplus_{n \in \ZZ} \Hom_\C(x, x\langle n \rangle)$ and $\mathrm{grPerf}(A^\bullet)$ is the category of graded perfect complexes of $A^\bullet$-modules. 
\end{cor}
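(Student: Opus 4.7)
The plan is to assemble the claim from the previous lemmas on weight structures together with a standard Morita-type identification of the heart with graded projective modules.

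First, I would verify that the hypothesis of Lemma \ref{thm:weight_conditions} applies to the collection $\mathscr{N} = \{x\langle n \rangle \mid n \in \ZZ\}$. By assumption $\mathscr{N}$ is tilting, and in particular negative; it also generates $\C$ under finite limits, finite colimits and retracts. Lemma \ref{thm:weight_conditions} then endows $\C$ with a bounded weight structure whose heart $\C^{\heartsuit_w}$ is the smallest additive retract-closed subcategory containing $\mathscr{N}$. Since the generators of $\C^{\heartsuit_w}$ have no higher $\Hom$'s between them (the tilting hypothesis), and $\Hom$'s out of retracts of finite direct sums are controlled by $\Hom$'s out of the generators, the heart itself is tilting, i.e.\ $\C^{\heartsuit_w} \cong \h \C^{\heartsuit_w}$.

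Next, Proposition \ref{thm:tilting_equivalence_2} applies and gives an equivalence
\[
    \C \cong \Ch^b(\C^{\heartsuit_w})
\]
via the weight complex functor. It remains to identify the right-hand side with $\mathrm{grPerf}(\End_\C^\bullet(x))$. For this I would use Morita theory for graded rings: consider the additive functor $\C^{\heartsuit_w} \to \mathrm{grProj}^{\mathrm{fg}}(\End_\C^\bullet(x))$ sending $y$ to the graded $\End_\C^\bullet(x)$-module $\bigoplus_n \Hom_\C(x, y\langle n \rangle)$. It sends the generator $x\langle m \rangle$ to the shifted free module of rank one and is fully faithful on the full subcategory spanned by the generators $\{x\langle n \rangle\}$, essentially by the definition of $\End_\C^\bullet(x)$ as a graded algebra. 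Since $\C^{\heartsuit_w}$ is the additive retract-closure of the generators and $\mathrm{grProj}^{\mathrm{fg}}(\End_\C^\bullet(x))$ is the additive retract-closure of the shifted free modules, a standard idempotent-completion and direct-sum argument promotes this to an equivalence of additive categories. Passing to bounded chain complexes and invoking the standard description of perfect complexes yields
\[
    \Ch^b(\C^{\heartsuit_w}) \cong \Ch^b(\mathrm{grProj}^{\mathrm{fg}}(\End_\C^\bullet(x))) \cong \mathrm{grPerf}(\End_\C^\bullet(x)),
\]
and composing with the weight complex equivalence gives the claim.

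The main obstacle I expect is checking that the heart is really tilting (not merely the generators), since retracts and finite direct sums can a priori introduce higher extensions; but because the homotopy mapping spectrum between retracts of finite sums is built from the mapping spectra between the generators by finite limits, the vanishing of higher $\Hom$'s on generators indeed propagates. Everything else is a straightforward application of Proposition \ref{thm:tilting_equivalence_2} and graded Morita theory.
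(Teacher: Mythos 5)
Your proposal is correct and follows essentially the same route the paper intends (and that the cited proof in \cite{motivic_springer} carries out): apply Lemma \ref{thm:weight_conditions} to get a bounded weight structure with tilting heart generated by $\{x\langle n\rangle\}$, invoke Proposition \ref{thm:tilting_equivalence_2} to identify $\C$ with $\Ch^b(\C^{\heartsuit_w})$, and then identify the heart with finitely generated graded projective $\End^\bullet_\C(x)$-modules by graded Morita theory. Your handling of the propagation of the tilting property from the generators to the heart matches the paper's own remark preceding the corollary, so there is nothing to add.
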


\newcommand{\etalchar}[1]{$^{#1}$}

\end{document}